\pgfplotsset{compat=1.16}
\newcommand{\p}{\mathbb{P}}
\newcommand{\e}{\mathbb{E}}
\newcommand{\var}{\mathrm{Var}}
\newcommand{\cip}{\stackrel{\p}{\to}}
\newcommand{\cid}{\stackrel{d}{\to}}
\newcommand{\Oh}{\mathrm{O}}
\newcommand{\n}{{(n)}}
\newcommand{\ges}{\geqslant}
\newcommand{\les}{\leqslant}
\newcommand{\ov}{\overline}
\newcommand{\und}{\underline}
\newcommand{\1}[1]{\mbox{\rm\large 1}_{\{#1\}}}
\newcommand\wh[1]{\hstretch{2}{\hat{\hstretch{.5}{#1}}}}
\newcommand{\ve}{\varepsilon}
\newcommand{\mbR}{\mathbb{R}}
\newcommand{\mcC}{\mathcal{C}}
\newcommand{\mcN}{\mathcal{N}}
\newcommand{\mcW}{\mathcal{W}}
\newcommand{\D}{\mathrm{d}}
\DeclarePairedDelimiter{\abs}{\lvert}{\rvert}
\DeclarePairedDelimiter{\RD}{\lfloor}{\rfloor}
\theoremstyle{plain}
\newtheorem{theorem}{Theorem}
\newtheorem{lemma}[theorem]{Lemma}
\newtheorem{proposition}[theorem]{Proposition}
\newtheorem{corollary}[theorem]{Corollary}
\theoremstyle{remark}
\newtheorem{remark}[theorem]{Remark}
\numberwithin{theorem}{section}
\begin{document}
\begin{abstract}
We provide a simple algorithm for construction of Brownian paths approximating those of a L\'evy process on a finite time interval.
It requires knowledge of the L\'evy process trajectory on a chosen regular grid and the law of its endpoint, or the ability to simulate from that.
This algorithm is based on reordering of Brownian increments, and it can be applied in a recursive manner.
We establish an upper bound on the mean squared maximal distance between the paths and determine a suitable mesh size in various asymptotic regimes. The analysis proceeds by reduction to the comonotonic coupling of increments.
Applications to model risk and multilevel Monte Carlo are discussed in detail, and numerical examples are provided.
\end{abstract}

\title{Implementable coupling of L\'evy process and Brownian motion}
\author[V.\ Fomichov, J.\ Gonz\'alez C\'azares and J.\ Ivanovs]{Vladimir Fomichov, Jorge Gonz\'alez C\'azares and Jevgenijs Ivanovs}
\address{Aarhus University, the Alan Turing Institute and University of Warwick}
\keywords{Approximation; distributional model risk; L\'evy processes; multilevel Monte Carlo; Wasserstein distance}
\maketitle

\section{Introduction}
Let $X=(X(t),t\ges 0)$ be a L\'evy process with $X(1)$ having zero mean and unit variance. We aim to construct a standard Brownian motion $W$ on the same probability space (or its extension) such that the mean squared maximal distance
\begin{equation}
\label{eq:objective}
\e\sup_{t\in [0,1]} \abs{X(t)-W(t)}^2
\end{equation}
is small. This, in particular, provides an upper bound on the respective Wasserstein distance between the laws of the given L\'evy process and the standard Brownian motion on the time interval $[0,1]$, assuming the above expectation can be computed with some guaranteed accuracy. Such bounds are needed in a number of applications including model risk and distributionally robust optimization.
Our focus, however, is on simple explicit constructions allowing to efficiently generate Brownian trajectories, which are paramount, for example, in the multilevel Monte Carlo method discussed below.
It should be mentioned that our problem is different from so-called Markov couplings of L\'evy processes~\cite{schilling11,majka17}, where the two processes of interest have the same transition probabilities, but different initial distributions.

\subsection{Applications}
\subsubsection*{Model Risk and Distributionally Robust Optimization}
Every model is only an approximation of reality and thus it is important to understand the impact of model misspecification (going beyond model parameters) on the quantities of interest.
A popular approach is to find the worst-case values for a family of plausible models in some neighbourhood of a chosen baseline model, which can be viewed as a systematic stress test~\cite{breuer2013systematic}.
The Wasserstein distance is a natural choice to define such an ambiguity ball of models in various settings, and it often leads to simple worst-case expressions~\cite{blanchet19}.
Furthermore, this approach has close links to distributionally robust optimization, where allowing for some freedom in the model helps to mitigate the optimizer's curse~\cite{esfahani2018data}, which is a well-known phenomenon in stochastic programming.

A critical task in these procedures is to estimate the radius of the ambiguity ball, and in the case of L\'evy-driven models our coupling provides the necessary tool.
For example, in the context of ruin theory one may use a diffusion approximation as a baseline model, while insisting that a certain compound Poisson process
(corresponding to the classical Cram\'er--Lundberg risk model) belongs to the ambiguity ball. 
In~\cite{blanchet19} it has been suggested to simulate coupled paths (using the Brownian embedding in~\cite{khoshnevisan93}) and to choose the radius according to the square-root of the empirical counterpart of~\eqref{eq:objective}.
In this setting an upper bound on the Wasserstein distance between the processes would be sufficient, given it is not overly conservative.
Note, however, that the implementable coupling approach can be based on the available historical claim sizes (jumps) without the knowledge of the exact underlying distribution.

\subsubsection*{Multilevel Monte Carlo}
Suppose we want to estimate the value of the expectation 
$\e g(X)$ for some appropriate function $g$ of the path of~$X$ using Monte Carlo simulation. Sampling $g(X)$ exactly is not feasible in general.
One standard way around this problem is to fix a small truncation level $\ve_n>0$ and to consider an approximation $X_n$ of $X$ obtained by replacing the martingale of jumps in $[-\ve_n,\ve_n]$ by an appropriately scaled Brownian motion~\cite{MR1834755}.
Apart from estimating $\e g(X_n)$ one also needs to control the bias $\e[g(X_n)-g(X)]$. 
A significant improvement in the computational complexity can be often obtained by the multilevel Monte Carlo method~\cite{giles2015multilevel}, see also~\cite{kyprianou_MLMC,giles2017multilevel_levy} for the L\'evy process setting.
This, however, requires the ability to sample jointly $g(X_n)$ and $g(X_{n+1})$ in a way that the level variance $\var[g(X_{n+1})-g(X_n)]$ is as small 
as possible, which must be done without significantly increasing the cost of drawing 
such a joint sample compared with the single level sample.

It is clear from the construction of approximations  that the only difficult part is to couple the martingale of jumps in $[-\ve_n,-\ve_{n+1})\cup (\ve_{n+1},\ve_n]$, where $\ve_{n+1}<\ve_n$, with an appropriately scaled Brownian motion, since the other parts can be reused. Note that such a Brownian motion must be constructed and it is not sufficient to simply claim its existence.
Assuming $g$ is sufficiently 
regular, it typically suffices to control the mean squared maximal distance between the paths as in~\eqref{eq:objective}.
Our coupling provides the necessary tool, and the corresponding asymptotic analysis is presented in \S\ref{sec:multilevel}.
In particular, we show that a substantial gain in computational complexity can be achieved for processes $X$ of unbounded variation on compacts over the algorithms available in the existing literature (see \S\ref{subsec:discr} and Figure~\ref{fig:comparison} below for a comparison with~\cite{MR2759203,dereich_heidenreich_11}).

\subsection{The construction}
\label{sec:construction}
Here we describe in words our construction of the coupled 
Brownian motion~$W(t)$ on the time interval $[0,1]$, 
postponing its precise definition and related notation to 
\S\ref{sec:methods}. This construction depends on an 
integer $k\ges 1$, and it can be viewed in a number of alternative 
ways. Firstly, we construct a skeleton of a standard 
Brownian motion~$W'(i/k)$ for $i=0,\ldots,k$ such that $W'(1)$ 
is (nearly) comonotonically coupled with $X(1)$, but is otherwise 
independent. The law of $X(1)$ may not be explicit, in which case  
we may use either fast Fourier inversion~\cite{MR1620156} or 
an empirical counterpart that only requires the ability to 
simulate $X$ at time~$1$ (see Appendix~\ref{sec:final_value}). 
Secondly, we reorder the $k$ increments 
$\Delta^k_i W'=W'(i/k)-W'((i-1)/k)$ so that they match the 
ordering of the increments $\Delta^k_i X$ of the process $X$; the ties in the latter are resolved randomly. 
The accumulated reordered increments form the skeleton of 
another Brownian motion $W$ over the grid $0,1/k,\ldots,1$, 
and we supplement this skeleton with independently sampled 
Brownian bridges connecting these points, see Figure~\ref{fig:OrderIncrements} for an illustration.

\begin{remark}\label{rem:match_ECDF}
The reordering of the increments matches their ranks, so a random draw of any pair of increments is equivalent to a draw from the comonotonically coupled empirical measures based on the sampled increments. In other words, our reordering procedure exploits the convergence of empirical measures to their true distributions and the simple form of the comonotonic coupling between discrete distributions with uniform weights. 
\end{remark}

\begin{figure}[ht]
\centering
\includegraphics[width=1\textwidth]{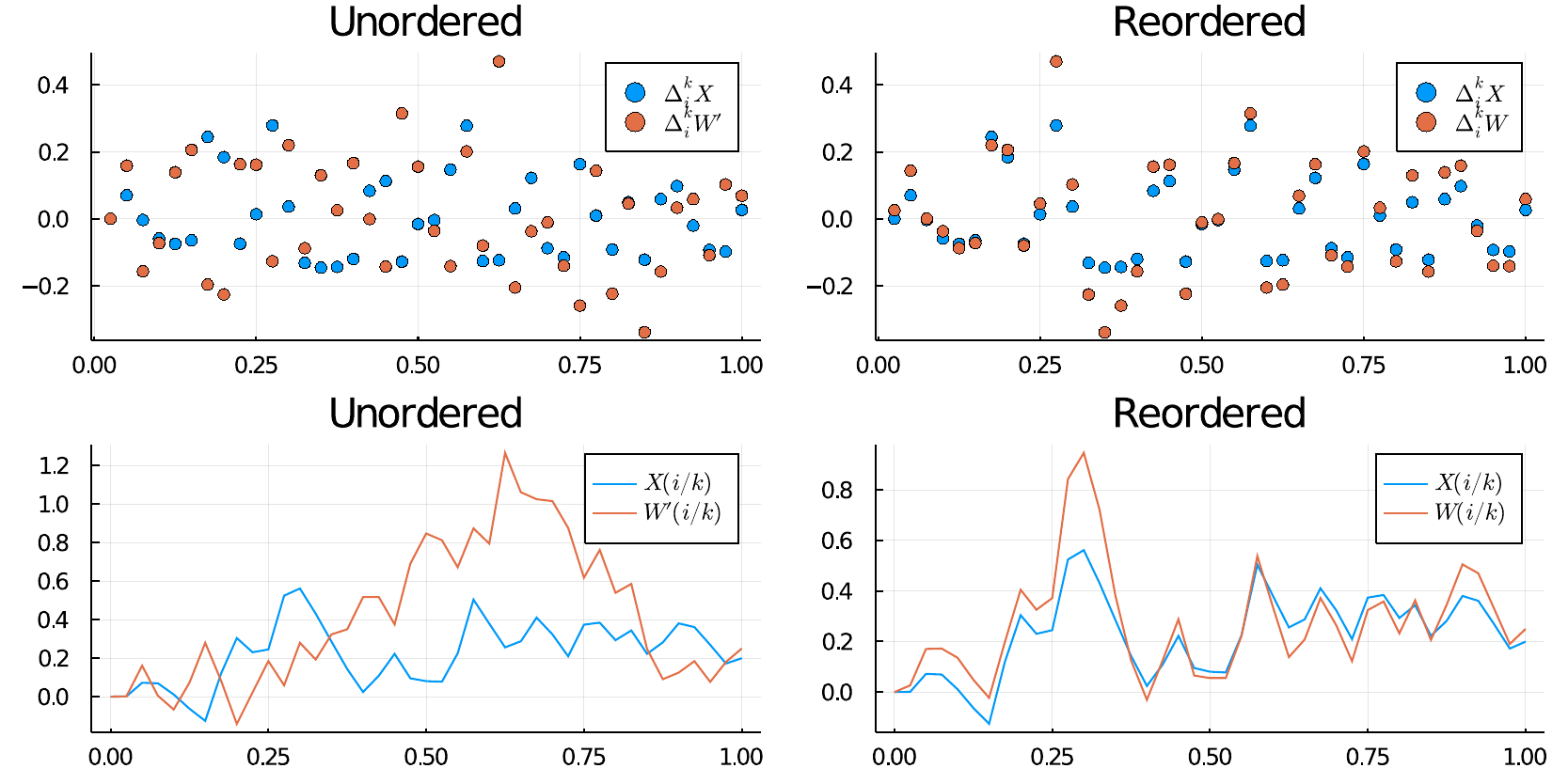}
\caption{Increments (top) and skeletons (bottom) of a gamma process $X$ and Brownian motions $W',W$ over the uniform grid $\{0,1/k,\ldots,1\}$ for $k=40$.}
\label{fig:OrderIncrements}
\end{figure}

It is clear that the quality of our coupling strongly depends on the choice of $k$. For $k=1$ only the end-points are coupled, but the Brownian bridge  is independent of~$X$.
The case of a large $k$ can be understood using the results in~\cite{recoverBM}, where we employed the idea of reordering independently sampled normal increments to recover the Brownian component of~$X$ up to a random linear drift.
More precisely, letting $W_X$ be the Brownian component of $X$ normalized to have unit variance, 
the Brownian bridge $W(t)-tW(1)$ converges in probability in supremum norm to the bridge $W_X(t)-tW_X(1)$ as $k\to\infty$.
In the case when $X$ has no Brownian component (under a further minor condition) the weak limit of the pair of processes $(W(t)-tW(1),X(t))$  has independent components.
In such a case $k=1$ and $k=\infty$ are analogous in the sense that they both lead to an independent Brownian bridge.
Therefore, one needs to choose a moderately large~$k$.

Our main result, Corollary~\ref{cor:Levy_to_BM}, provides some asymptotic theory suggesting an adequate choice of~$k$.
We further investigate it in~\S\ref{sec:regimes} for various limiting regimes, apply it to multilevel Monte Carlo method in~\S\ref{sec:multilevel}, and then also illustrate various choices numerically in~\S\ref{sec:numerical_simulations}.

\begin{remark}\label{rem:hierarchical}
Note that the coupled skeleton of $W$ is supplemented with independent Brownian bridges.
Alternatively, one may repeat the partitioning and reordering procedure for each interval $[(i-1)/k,i/k]$ hoping to construct bridges which better approximate the evolution of $X$ within these intervals.
This leads to a hierarchical construction with possibly different $k$ at each level. In \S\ref{sec:numerical_simulations} we examine this idea numerically.
\end{remark}

\subsection{Literature and related methods}
Coupling of random variables and processes is a classical area of probability theory, see monographs~\cite{thorisson,rio} and references therein. 
The famous result of Skorokhod in its modern interpretation~\cite[Thm.~15.17]{kallenberg} states that given a sequence of L\'evy processes $X_n$ with $X_n(1)\cid W(1)$ it is possible to construct $X_n$ and a standard Brownian motion $W$ on a common probability space so that $\sup_{t\in [0,1]} \abs{X_n(t)-W(t)}\cip 0$. This existence result relies on Skorokhod's representation theorem~\cite[Thm.~4.30]{kallenberg}, and it does not yield a feasible algorithm for constructing such couplings.

Another related classical result is Strassen's random walk approximation by a Brownian motion~\cite{strassen1964invariance} underlying the functional 
LIL and based on Skorokhod's embedding theorem, see also~\cite[Ch.~14]{kallenberg}. 
It yields an explicit coupling, implementation of which requires sampling Brownian paths up to a certain stopping time. It is, however, not obvious how to do this both accurately and efficiently.
An embedding of a given L\'evy process into a Brownian motion by means of a random time change was used in~\cite{monroe} to investigate its $p$-variation.
Furthermore, a construction of a compensated Poisson process $X$ from a Brownian motion $W$ was suggested in~\cite{khoshnevisan93} and then used to study crossings of empirical processes. The analogous construction is also suggested for a compound Poisson process with negative jumps, but the resultant process in general exhibits positive dependence between inter-arrival times and subsequent jumps, and so it is not compound Poisson.

Yet another coupling of random walks with exponential moments, often called the Hungarian embedding or the KMT (Koml\'os--Major--Tusn\'ady) coupling, was proposed in~\cite{komlos_major_tusnady_75}; it was later extended by Zaitsev to the multidimensional case in~\cite{zaitsev}. Although, in a certain sense, these couplings are optimal, the fact that they are based on conditional distributions makes them hardly suitable for numerical computations. The same critique applies to the coupling of solutions of two SDEs with respect to a L\'evy process and a Brownian motion constructed in~\cite{fournier} and of a L\'evy process with small jumps and a Brownian motion introduced in~\cite{MR2759203}, both of which are based on the KMT coupling and its multidimensional extension by Zaitsev.

A somewhat related problem consists in estimating the Wasserstein distance between two infinitely divisible distributions, or equivalently between the marginal distributions of two L\'evy processes.
The optimal, or near-optimal, bounds in the general case were recently obtained in~\cite{mariucci_reiss} relying on the corresponding bound in~\cite{rio09} on the error in the central limit theorem, see also Lemma~\ref{lem:Rio} below.

\section{Definitions and fundamental properties}
\label{sec:methods}
We start by setting up the notation and stating a result about the quality of comonotonic coupling of the marginals.
Next, we introduce an auxiliary coupling method based on comonotonic coupling of $k$ increments, show that the respective Brownian bridges are closely related, and investigate associate discretization errors.
These notions will be used in \S\ref{sec:main_results} to show (under mild conditions) that our reordering coupling is not worse than the above auxiliary coupling.
Finally, the quality of the latter is analysed to arrive at our main result in Corollary~\ref{cor:Levy_to_BM}.
Unlike in \S\ref{sec:main_results}, where we work with a convergent sequence $X_n$ of L\'evy processes, here we deal with a single process $X$ which greatly simplifies presentation. 

Throughout the paper, unless explicitly stated otherwise, we assume that
\begin{equation}
\label{eq:main_assumptions}
\e X(1)=0,\quad \e X^2(1)=1,\quad \e X^4(1)<\infty.
\end{equation}
The L\'evy measure of $X$ will be denoted by $\Pi$, and we also define its fourth moment and the tail function:
\[
\mu_4\coloneqq\int_\mbR x^4\Pi(\D x),\qquad \ov\Pi(x)\coloneqq\Pi(\mbR\setminus [-x,x]),x>0.
\]
Recall that the marginal distribution functions are continuous unless $X$ is a drifted compound Poisson process, and so this fundamental class of processes should be treated with care.

\subsection{Wasserstein distance between the marginal laws}
Consider two random variables $\zeta_1$ and $\zeta_2$ with distribution functions $F_1$ and $F_2$, respectively.
Recall that every coupling $(\zeta_1,\zeta_2)$ can be obtained by setting $\zeta_2=h(\zeta_1,U)$ for an appropriate measurable function~$h$ and a standard uniform~$U$ independent of $\zeta_1$.
That is, every joint law with marginals $F_1$ and $F_2$ can be retrieved in this way. Comonotonic coupling is usually defined by $\big(F_1^{-1}(U),F_2^{-1}(U)\big)$,
where $F_i^{-1}(u)=\inf\{x|F_i(x)\ges u\}$ denotes the left inverse of~$F_i$.
Alternatively, it can be obtained by taking
\begin{equation}
\label{eq:h}
h(x,u)=F_2^{-1}\big(\p(\zeta_1<x)+u\p(\zeta_1=x)\big)
\end{equation}
in the specification of $\zeta_2$, see~\cite{ruschendorf2009distributional}. 
For a continuous $F_1$ there is no need to further randomize using $U$, and we see that $\zeta_2=F_2^{-1}(F_1(\zeta_1))$ is a monotone transform of~$\zeta_1$.
The Wasserstein distance $\mcW_2$ between the laws of $\zeta_1$ and $\zeta_2$ (with some abuse of notation) is defined as
\[
\mcW_2(\zeta_1,\zeta_2)\coloneqq \inf_{(\zeta_1',\zeta_2')}\left(\e\abs{\zeta'_1-\zeta'_2}^2\right)^{1/2},
\]
where the infimum is taken over all possible couplings of $\zeta_1$ and $\zeta_2$.
It is a standard fact that the infimum is achieved by the comonotonic coupling~\cite[Ex.~3.2.14]{mass_transportation}.

The following lemma provides an upper bound for the Wasserstein distance between the marginals of a L\'evy process and a Brownian motion in terms of $\mu_4$, see also~\cite{fournier, mariucci_reiss}.

\begin{lemma}
\label{lem:Rio}
There exists a constant $C>0$ such that for any L\'evy process $X$ satisfying condition~\eqref{eq:main_assumptions} and a standard Brownian motion $B$ we have
\[
\forall\, t\ges 0:\quad \mcW_2^2(X(t),B(t))\les C\mu_4.
\]
\end{lemma}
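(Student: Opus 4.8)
The plan is to reduce the statement to Rio's quantitative central limit theorem in the $\mcW_2$-distance~\cite{rio09}, applied to the i.i.d.\ decomposition of a L\'evy increment into $n$ pieces, followed by passing to the limit $n\to\infty$. First I would fix $t\ges 0$ and $n\ges 1$ and write $X(t)\stackrel{d}{=}\sum_{i=1}^{n}Y_i$, where $Y_1,\dots,Y_n$ are i.i.d.\ copies of $X(t/n)$, namely the $n$ increments of $X$ over the sub-intervals of $[0,t]$ of length $t/n$. Under~\eqref{eq:main_assumptions} the summand $Y_1=X(t/n)$ is centred, has variance $t/n$, and has finite fourth moment, so $\e Y_1^4$ may be computed through cumulants: the characteristic exponent of $X(s)$ equals $s$ times that of $X(1)$, the Gaussian part contributes nothing to the fourth cumulant, and differentiating the L\'evy--Khintchine exponent four times at the origin (justified since $\mu_4<\infty$) gives $\kappa_4(X(s))=s\mu_4$, while $\kappa_2(X(s))=\var X(s)=s$. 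Since $\e Y_1=0$, the moment--cumulant identity then yields $\e Y_1^4=\kappa_4+3\kappa_2^2=s\mu_4+3s^2$ with $s=t/n$.

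Next I would invoke Rio's bound which, in rescaled form, states that there is a universal constant $C$ such that for any i.i.d.\ centred sequence $Y_1,\dots,Y_n$ with common variance $\sigma^2$ and finite fourth moment,
\[
\mcW_2^2\Big(\sum_{i=1}^{n}Y_i,\ \mathcal{N}(0,n\sigma^2)\Big)\les \frac{C\,\e Y_1^4}{\sigma^2}.
\]
Because $\var X(t)=t$, the law of $B(t)$ is $\mathcal{N}(0,t)=\mathcal{N}(0,n\sigma^2)$ with $\sigma^2=t/n$, and substituting the fourth moment found above gives
\[
\mcW_2^2\big(X(t),B(t)\big)\les C\,\frac{(t/n)\mu_4+3(t/n)^2}{t/n}=C\big(\mu_4+3t/n\big).
\]
The left-hand side is independent of the auxiliary parameter $n$, so letting $n\to\infty$ (equivalently, taking the infimum over $n\ges 1$) leaves $\mcW_2^2(X(t),B(t))\les C\mu_4$; as this is uniform in $t\ges 0$ and the case $t=0$ is trivial, the lemma follows.

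The one point that genuinely needs care is invoking Rio's inequality in its fourth-moment form rather than in a third-moment, Berry--Esseen-type form: it is precisely the ratio $\e X(t/n)^4/\var X(t/n)=\mu_4+3t/n$ that stays bounded as the mesh $t/n\to 0$, whereas the analogous ratio built from the third absolute moment of $X(t/n)$ may diverge (for instance for pure-jump processes). The vanishing of the spurious term $3t/n$ in the limit is exactly what produces the clean constant $C\mu_4$ together with the uniformity in $t$. As a shortcut, one may note that the same reduction underlies the bounds obtained in~\cite{mariucci_reiss} and quote the result from there.
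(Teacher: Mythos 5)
Your proposal is correct and follows essentially the same route as the paper: decompose $X(t)$ into $n$ i.i.d.\ increments, apply Rio's $\mcW_2$ bound from~\cite{rio09}, and let $n\to\infty$. The only cosmetic difference is that you compute $\e X^4(t/n)=(t/n)\mu_4+3(t/n)^2$ explicitly via cumulants, whereas the paper cites~\cite{MR2479503} for the small-time asymptotic $(n/t)\,\e X^4(t/n)\to\mu_4$; both give the same limit.
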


\begin{proof}
Obviously, we can assume that $t>0$. For arbitrary $n\ges 1$ we have
\[
X(t)=\sum_{i=1}^n [X(it/n)-X((i-1)t/n)],
\]
where the summands are i.i.d., and so applying~\cite[Thm.~4.1]{rio09} yields
\[
\mcW_2^2(X(t),B(t))=t\mcW_2^2(X(t)/\sqrt{t},B(1))\les tCn\e\big(\abs{X(t/n)}/\sqrt{t}\big)^4=C(n/t)\e X^4(t/n).
\]
By~\cite[Thm.~1.1]{MR2479503}, the upper bound tends to the upper bound of the stated inequality as $n\to\infty$.
\end{proof}


\subsection{Two coupling methods}\label{sec:twomethods}
Generally speaking, coupling marginal distributions of 
two processes does not allow to construct a coupling of the 
whole processes. 
Nevertheless, we may comonotonically couple the increments of a Brownian motion and $X$ over a grid.
This auxiliary method is closely related to the proposed coupling based on reordering of increments, and it will be important for the asymptotic analysis of the latter.
Note that the auxiliary coupling is much more challenging to implement efficiently in a common scenario when the distribution function of the increments of $X$ is not readily available. 
Here we provide precise definitions of the two methods, both of which depend on
\[\text{integer } k\ges 1\; \text{specifying the number of increments used.}\]
We will need standard uniforms $U,U_1,\ldots, U_k$ and also $k+1$ Brownian bridges, which are mutually independent and also independent of~$X$.

\subsubsection*{Reordering of increments}
Let us now give a precise definition of the process $W$ described in words in \S\ref{sec:construction}. 
Start by taking a standard Brownian motion $W'$ with $W'(1)=h(X(1),U)$ for an appropriate measurable function $h$ and such that $(W'(t)-tW'(1),t\in[0,1])$ is a standard Brownian bridge independent of everything else.
Recall that this construction allows for an arbitrary coupling of the end-points $X(1)$ and $W'(1)$.
Importantly, the processes $W'$ and $X$ are independent given $X(1)$.

Let $\pi$ be an a.s.\ unique random 
permutation on $\{1,\ldots,k\}$ such that for all $i\neq j$:
\begin{equation}
\label{eq:permutation}
\Delta_{\pi(i)}^kW'<\Delta_{\pi(j)}^kW'\qquad\text{iff}
\qquad
\Delta_i^kX<\Delta_j^kX
\qquad\text{or}\qquad
\Delta_i^kX=\Delta_j^kX,\enskip U_i<U_j.
\end{equation}
That is, the ties in $\Delta_i^k X$, only possible when $X$ is a compound Poisson process with drift, are resolved uniformly at random. Define a stochastic process $W$ by setting $W(0)\coloneqq 0$ and
\begin{equation}\label{eq:W}
W(t)\coloneqq W\big(\tfrac{i-1}{k}\big)+W'\big(\tfrac{\pi(i)-1}{k}+t-\tfrac{i-1}{k}\big)- W'\big(\tfrac{\pi(i)-1}{k}\big),\quad \tfrac{i-1}{k}<t\les\tfrac{i}{k},\quad i=1,\ldots,k.
\end{equation}
Thus defined $W$ is indeed a standard Brownian motion, see Lemma~\ref{lem:BM} below.

\subsubsection*{Comonotonic coupling of increments}
Let $h_k$ be the function defined in~\eqref{eq:h}, where $\zeta_1=X(1/k)$ and $\zeta_2$ is mean zero normal with variance~$1/k$.
By taking $\xi_i=h_k(\Delta_i^kX,U_i)$ we produce Brownian increments comonotonically coupled with the increments of~$X$.
Let also $\beta_1,\ldots,\beta_k$ be independent Brownian bridges on the time interval $[0,1/k]$ that are independent of everything else. 
Define a stochastic process $\wh W$ by setting $\wh W(0)\coloneqq 0$ and
\begin{equation}\label{eq:W0}
\wh W(t)\coloneqq \wh W\left(\tfrac{i-1}{k}\right)+\beta_i\left(t-\tfrac{i-1}{k}\right)+ k\left(t-\tfrac{i-1}{k}\right)\xi_i,\quad \tfrac{i-1}{k}<t\les\tfrac{i}{k},\quad i=1,\ldots,k.
\end{equation}
In words, $\wh W$ is a stochastic process whose increments over the given grid are comonotonically coupled with the increments of $X$ and whose ``interval bridges'' are independent of $X$.

\begin{lemma}\label{lem:BM}
The processes $W$ and $\wh W$ are standard Brownian motions and their increments have the same ordering:
\[\Delta_i^kW<\Delta_j^k W\qquad\text{iff}\qquad \Delta_i^k\wh W<\Delta_j^k \wh W\]
with probability~1.
\end{lemma}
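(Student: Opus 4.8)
The plan is to establish two things: that $W$ and $\wh W$ are standard Brownian motions, and that their increment orderings agree. The claims about $\wh W$ and about the orderings are quick; the substantive part is that $W$ is a standard Brownian motion, and the obstacle there is that the permutation $\pi$ reordering the increments of $W'$ is itself a function of those increments (through their ranks), so exchangeability cannot be applied naively. The tool I will lean on is the classical fact that if $(Y_1,\dots,Y_k)$ are i.i.d.\ with a non-atomic law, each tagged by a ``label'' so that the tagged pairs remain i.i.d., then the ascending rearrangement of the pairs (sorting by the $Y$-coordinate and carrying the labels along) is independent of the sorting permutation, which is uniform on the symmetric group; equivalently, the tuple of pairs has the same law as that rearrangement composed with any uniform random permutation independent of it. I will also fix, once and for all, the a.s.\ strict total order $\prec$ on $\{1,\dots,k\}$ given by the right-hand side of~\eqref{eq:permutation}, so that $i\prec j$ means $\Delta^k_iX<\Delta^k_jX$, or $\Delta^k_iX=\Delta^k_jX$ and $U_i<U_j$; since the pairs $(\Delta^k_iX,U_i)$ are i.i.d.\ and a.s.\ pairwise distinct, $\prec$ is a.s.\ a genuine total order and its ranking permutation $S$ (with $S(1)\prec\dots\prec S(k)$) is uniform on the symmetric group and independent of $W'$.

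For the ordering claim, from~\eqref{eq:W} one reads off $\Delta^k_iW=\Delta^k_{\pi(i)}W'$, so~\eqref{eq:permutation} says exactly that $\Delta^k_iW<\Delta^k_jW$ if and only if $i\prec j$. For $\wh W$ one has $\Delta^k_i\wh W=\xi_i=h_k(\Delta^k_iX,U_i)=F_2^{-1}(A_i)$ with $A_i\coloneqq\p(X(1/k)<\Delta^k_iX)+U_i\p(X(1/k)=\Delta^k_iX)$ and $F_2$ the $\mathcal N(0,1/k)$ distribution function, and since $F_2^{-1}$ is strictly increasing on $(0,1)$ the claim reduces to $\{i\prec j\}=\{A_i<A_j\}$ a.s. One inclusion is automatic because $\prec$ is a.s.\ total; for the other I would use the randomized quantile identity $F_1^{-1}(A_i)=\Delta^k_iX$ a.s.\ (with $F_1$ the law of $X(1/k)$): if $\Delta^k_iX<\Delta^k_jX$ then $A_i\ges A_j$ would force $F_1^{-1}(A_i)\ges F_1^{-1}(A_j)$, a contradiction, whereas if $\Delta^k_iX=\Delta^k_jX$ this common value is a.s.\ an atom of $F_1$ and $A_i-A_j$ has the sign of $U_i-U_j$. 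Hence both orderings coincide with $\prec$, and therefore with each other.

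That $\wh W$ is a standard Brownian motion I would read off from~\eqref{eq:W0}: the $\xi_i=h_k(\Delta^k_iX,U_i)$ are i.i.d.\ (measurable images of i.i.d.\ pairs) and each is $\mathcal N(0,1/k)$ because $A_i$ is a.s.\ uniform on $(0,1)$, while the $i$-th ``interval bridge'' $\wh W(\tfrac{i-1}{k}+s)-\wh W(\tfrac{i-1}{k})-ks\,\xi_i$ of $\wh W$ is, by~\eqref{eq:W0}, exactly $\beta_i(s)$. Thus $\wh W$ is a Gaussian random walk over $\{0,1/k,\dots,1\}$ with i.i.d.\ $\mathcal N(0,1/k)$ increments whose subintervals are refilled by independent standard Brownian bridges, which is one of the standard constructions of a standard Brownian motion.

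The crux is that $W$ is a standard Brownian motion. I would encode a path by the tuple of its $k$ skeleton increments together with its $k$ interval bridges, and let $\Phi$ be the fixed measurable map that reconstructs a path from such a tuple by forming partial sums of the increments and gluing the bridges onto the subintervals. By definition of the interval bridge, $W'=\Phi(T)$ with $T\coloneqq\big((\Delta^k_jW',C'_j)\big)_{j=1}^k$, where $C'_j$ is the $j$-th interval bridge of $W'$; and~\eqref{eq:W} shows $W=\Phi(T\circ\pi)$, where $(T\circ\pi)_i\coloneqq T_{\pi(i)}$. Because $W'$ is a standard Brownian motion, $T$ has i.i.d.\ entries (increments $\mathcal N(0,1/k)$, bridges independent standard, increments independent of bridges). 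Now write $\pi=R\circ S^{-1}$, with $R$ the permutation that sorts the increments $\Delta^k_jW'$ into increasing order; then $T\circ\pi=(T\circ R)\circ S^{-1}=T^{\ast}\circ S^{-1}$, where $T^{\ast}\coloneqq T\circ R$ is the ascending rearrangement of $T$ by its increment coordinate. Since $S^{-1}$ is uniform and, being a function of $X$ and the $U_i$, independent of $T^{\ast}$, the tool quoted above gives $T^{\ast}\circ S^{-1}\overset{d}{=}T^{\ast}\circ\Sigma\overset{d}{=}T$ for any uniform $\Sigma$ independent of $T^{\ast}$, whence $W=\Phi(T\circ\pi)\overset{d}{=}\Phi(T)=W'$. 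I expect this final chain of equalities in law to be the delicate point, and the one to write out carefully: it works only because the $W'$-dependent permutation $R$ has been absorbed into the sorted tuple $T^{\ast}$, after which $R^{-1}$ and the genuinely independent uniform $S^{-1}$ become interchangeable.
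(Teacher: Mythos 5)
Your overall plan is sound and close in spirit to the paper's: encode $W'=\Phi(T)$ and $W=\Phi(T\circ\pi)$ via the tuple $T$ of increments and interval bridges, factor $\pi=R\circ S^{-1}$ with $R$ the sorting permutation of $W'$'s increments, absorb the $W'$-measurable $R$ into the sorted tuple $T^{\ast}=T\circ R$, and then apply the rearrangement identity $T^{\ast}\circ\Sigma\overset{d}{=}T$ for $\Sigma$ uniform and independent of $T^{\ast}$. The ordering claim and the argument for $\wh W$ are fine. The gap is in the assertion that $S$ (equivalently $S^{-1}$) is independent of $T^{\ast}$. You justify it by saying $S$ is a function of $X$ and the $U_i$; but $W'$ — and hence $T^{\ast}$ — is \emph{also} a function of $X$, because by construction $W'(1)=h(X(1),U)$ couples the endpoint of $W'$ to $X(1)$. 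So ``$S$ is a function of $X,U_i$'' does not by itself separate $S$ from $T^{\ast}$.

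The claim is nevertheless true, but establishing it is exactly the step the paper handles by conditioning on $X(1)$: given $X(1)$, the pairs $(\Delta^k_iX,U_i)$ are exchangeable, so $S$ is conditionally uniform, hence unconditionally uniform and independent of $X(1)$; and given $X(1)$, $W'$ is a function of $U$ and the bridge alone, which are independent of $(\Delta^k_iX,U_i)$, so $S$ is conditionally independent of $W'$. Integrating out $X(1)$ then yields $S$ uniform and independent of $W'$, hence of $T^{\ast}$, which is what your rearrangement step actually needs. The paper sidesteps working with the full $T^{\ast}$ by passing to the bridge $W'(t)-tW'(1)$, which is independent of everything by construction, showing that permuting its increments by the (conditionally) uniform $S$ preserves the bridge law, and reattaching $W(1)=W'(1)$ at the end; your route carries the $X(1)$-dependent endpoint along inside $T^{\ast}$, so it genuinely requires the conditional argument above rather than the stated reason.
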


\begin{proof}
The same ordering of increments follows from~\eqref{eq:permutation} and the fact that $h_k(x_1,u_1)<h_k(x_2,u_2)$ when either $x_1<x_2$ or $x_1=x_2$ and $u_1<u_2$ with $\p(X(1/k)=x_1)>0$; here $u_i\in(0,1)$.
It is essential that the same sequence of $U_i$ was used to resolve the ties in the first method and to provide extra randomness in the second method.

It is easy to see that $\wh W$ is a standard Brownian motion, since the increments $\xi_i$ are independent and have the required distribution.
Checking that $W$ is a Brownian motion is more complicated, and we need to take care of both reordering and coupling the end-points.
Recall that the process $X$, Brownian bridge $W'(t)-tW'(1)$, and uniforms~$U,U_1,\ldots, U_k$ are mutually independent.
Conditionally on $X(1)$, the pairs $(\Delta_i^k X,U_i)$ are exchangeable, and thus $W(t)-tW(1)$ is still a standard Brownian bridge.
The latter can be seen as obtained from  $W'(t)-tW'(1)$ by an independent uniform permutation of increments.
Note that $W(1)=W'(1)=h(X(1),U)$ to conclude.
\end{proof}

\begin{remark}
If $X$ is not a compound Poisson process with drift, then there is no need in the uniforms $U_1,\ldots,U_k$ in the above two constructions.
In such a case there are no ties in the first method, the increments $\xi_i$ are monotone transforms of $\Delta_i^k X$ in the second method, and the same ordering of the increments of $W$ and $\wh W$ is automatic.
\end{remark}

Finally, let us point out that we have constructed  a trivariate process $(X,W,\wh W)$, and not just two couplings $(X,W)$ and $(X,\wh W)$.
This is crucial for the proofs, where the quality of the reordering coupling is related to the quality of the auxiliary coupling, see Theorem~\ref{thm:basic} below.

\subsection{Proximity of the two Brownian bridges}
The following lemma is an extension of Lemma~7 from~\cite{recoverBM}, which concerned convergence in probability, to convergence in mean.

\begin{lemma}
\label{lem:standard_normal_rvs}
Let $Z_1,\ldots,Z_k$ and $Z'_1,\ldots,Z'_k$ be two independent sets of i.i.d. standard normal random variables, and let $Z_{(1)},\ldots,Z_{(k)}$ and $Z'_{(1)},\ldots,Z'_{(k)}$ be their order statistics. Then
\begin{gather*}
\lim_{k\to\infty} \dfrac{1}{\log\log k}\e\sum_{i=1}^k \big((Z_{(i)}-\ov Z_k)-(Z'_{(i)}-\ov Z'_k)\big)^2=2,
\end{gather*}
where $\ov Z_k=(Z_1+\ldots+Z_k)/k$ and $\ov Z'_k$ denote the respective arithmetic means.
\end{lemma}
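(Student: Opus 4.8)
My plan is to turn the statement into a purely deterministic asymptotic for the variances of normal order statistics, and then to prove that asymptotic. First, put $S_k\coloneqq\sum_{i=1}^k\big((Z_{(i)}-\ov Z_k)-(Z'_{(i)}-\ov Z'_k)\big)^2$. Since $(Z_{(1)},\dots,Z_{(k)})$ is a rearrangement of $(Z_1,\dots,Z_k)$ we have $\sum_i Z_{(i)}=\sum_i Z_i=k\ov Z_k$, and likewise for the primed sample, so the numbers $Z_{(i)}-Z'_{(i)}$ have sample mean $\ov Z_k-\ov Z'_k$ and hence $S_k=\sum_i(Z_{(i)}-Z'_{(i)})^2-k(\ov Z_k-\ov Z'_k)^2$. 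Since the two sets of variables are independent and each consists of i.i.d.\ standard normals, $Z_{(i)}$ and $Z'_{(i)}$ are i.i.d., in particular with common mean $m_{i,k}\coloneqq\e Z_{(i)}$ and $\e Z_{(i)}^2=\e Z_{(i)}'^2$; moreover $\sum_i\e Z_{(i)}^2=\sum_i\e Z_i^2=k$ and $\e(\ov Z_k-\ov Z'_k)^2=\var(\ov Z_k)+\var(\ov Z'_k)=2/k$. Taking expectations,
\[
\e S_k=2\Big(k-\sum_{i=1}^k m_{i,k}^2\Big)-2=2\sum_{i=1}^k\var\big(Z_{(i)}\big)-2 ,
\]
so the lemma is equivalent to the deterministic claim $\sum_{i=1}^k\var(Z_{(i)})\sim\log\log k$ as $k\to\infty$; unlike the in-probability statement of~\cite[Lem.~7]{recoverBM}, this needs no concentration estimate, only the asymptotics of this single sum.

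To establish $\sum_{i=1}^k\var(Z_{(i)})\sim\log\log k$ I would write $\Phi,\phi$ for the standard normal distribution and density, so $Z_{(i)}=\Phi^{-1}(U_{(i)})$ with $U_{(i)}\sim\mathrm{Beta}(i,k+1-i)$ the order statistics of $k$ i.i.d.\ uniforms; fix a slowly growing $m_k\to\infty$ with $m_k=\oh(\log k)$ and $m_k/\log\log k\to\infty$ (e.g.\ $m_k=\lfloor\sqrt{\log k}\rfloor$); and split $\{1,\dots,k\}$ into the extreme part $E_k=\{i\les m_k\}\cup\{i>k-m_k\}$ and its complement. On $E_k$, classical estimates for (intermediate) extremes of i.i.d.\ normals give $\var(Z_{(i)})=\Oh(1/\log k)$ uniformly, whence $\sum_{i\in E_k}\var(Z_{(i)})=\Oh(m_k/\log k)=\oh(\log\log k)$. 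For $i\notin E_k$, a quantitative delta method — Taylor expansion of $\Phi^{-1}$ at $p_i\coloneqq i/(k+1)$ with the remainder controlled through the moments of $U_{(i)}-p_i$ and bounds on $(\Phi^{-1})'$ and $(\Phi^{-1})''$ — yields, uniformly,
\[
\var\big(Z_{(i)}\big)=\frac{p_i(1-p_i)}{k\,\phi\big(\Phi^{-1}(p_i)\big)^2}\big(1+\oh(1)\big) ,
\]
the relative error being $\Oh\big(\min(i,k-i)^{-1/2}\big)=\oh(1)$ because $\mathrm{Beta}(i,k+1-i)$ concentrates on an interval of length $\asymp\sqrt{p_i(1-p_i)/k}$ over which $\Phi^{-1}$ is effectively affine exactly when $\min(i,k-i)\to\infty$. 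Summing, $\sum_{i\notin E_k}\var(Z_{(i)})$ is $(1+\oh(1))$ times a Riemann sum (with error $\oh(1)$) of $\int\frac{p(1-p)}{\phi(\Phi^{-1}(p))^2}\,\D p$ over $p$ from $\Theta(m_k/k)$ to $1-\Theta(m_k/k)$; the substitution $x=\Phi^{-1}(p)$ turns this integral into $\int_{-x_k}^{x_k}\frac{\Phi(x)(1-\Phi(x))}{\phi(x)}\,\D x$ with $x_k=\Phi^{-1}\big(1-\Theta(m_k/k)\big)=(1+\oh(1))\sqrt{2\log k}$, and Mills' ratio $1-\Phi(x)\sim\phi(x)/x$ makes the integrand $\sim1/\abs{x}$ at infinity (and bounded near $0$), so the integral is $\sim2\log x_k\sim\log(2\log k)\sim\log\log k$. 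Adding the two contributions gives $\sum_{i=1}^k\var(Z_{(i)})\sim\log\log k$, hence $\e S_k\sim2\log\log k$, as required.

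The hard part will be the uniform quantitative delta-method estimate over the entire non-extreme range $m_k\les i\les k-m_k$, especially for the \emph{intermediate} order statistics with $i\to\infty$, $i/k\to0$ (and symmetrically), which are exactly the ones producing the leading $\log\log k$: the delicate point is to control the contribution of the unlikely event that $U_{(i)}$ falls near $0$ or $1$, where $\Phi^{-1}$ is unbounded, by playing off the exponential concentration of $\mathrm{Beta}(i,k+1-i)$ against the $\Oh(\sqrt{\log k})$ size of $\Phi^{-1}$ there — this is what dictates the growth condition $m_k/\log\log k\to\infty$. By contrast, the extreme-value bound $\var(Z_{(1)})=\Oh(1/\log k)$, the Riemann-sum error, and the evaluation of the integral via Mills' ratio are routine. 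An alternative that avoids the delta method altogether is the exact identity $\sum_i\var(Z_{(i)})=2\iint_{x<y}\e\big[(N_x-N'_y)^+\big]\,\D x\,\D y$ with $N_x\sim\mathrm{Bin}(k,\Phi(x))$ and $N'_y\sim\mathrm{Bin}(k,\Phi(y))$ independent, obtained from $\var Y=\iint\big[\p(Y\les\min(x,y))-\p(Y\les x)\p(Y\les y)\big]\,\D x\,\D y$ together with $\sum_i\p(Z_{(i)}\les x)=k\Phi(x)$; estimating this binomial expectation directly leads to the same asymptotic but appears to be of comparable difficulty.
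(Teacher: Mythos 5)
Your reduction step is correct and clean: since $\sum_i Z_{(i)}=\sum_i Z_i$ and $\sum_i Z_{(i)}^2=\sum_i Z_i^2$, one gets $\e S_k=2\sum_i\var(Z_{(i)})-2$, so the lemma is equivalent to the deterministic asymptotic $\sum_{i=1}^k\var(Z_{(i)})\sim\log\log k$. This is actually the \emph{same} quantity the paper works with — their $\e\big[k\,\mcW_2^2(G_k,G'_k)\big]$ equals $2\sum_i\var(Z_{(i)})$ — but you expose it more transparently by taking expectations term by term, and you correctly observe that this makes the expectation version of the lemma \emph{easier} to state than the convergence-in-probability version, not harder. That is a genuinely useful repackaging.

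However, after the reduction you have not proved the lemma, and you say so yourself. The paper finishes by citing \cite[Rem.~2.4]{MR4059190} (Bobkov--Ledoux), which is precisely where the asymptotic $\e\big[k\,\mcW_2^2(G_k,G'_k)\big]\sim 2\log\log k$, equivalently $\sum_i\var(Z_{(i)})\sim\log\log k$, is established; the $\log\log k$ growth for the Gaussian is a delicate fact, not a routine calculation. Your plan to get it from a delta-method linearization $\var(Z_{(i)})\approx p_i(1-p_i)/\big(k\,\phi(\Phi^{-1}(p_i))^2\big)$, a Riemann-sum-to-integral pass, and Mills' ratio giving $\int_{-x_k}^{x_k}\Phi(1-\Phi)/\phi\,\D x\sim 2\log x_k\sim\log\log k$, is the right outline and your bookkeeping of the extreme block $E_k$ and of the integral tails is correct. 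But the entire difficulty sits exactly in the step you flag as "the hard part": a \emph{uniform} error bound for the linearization over the full intermediate range $m_k\les i\les k-m_k$, including the tail contribution where $U_{(i)}$ is near $0$ or $1$ and $\Phi^{-1}$ is unbounded. Without that estimate the argument does not close, and establishing it in a self-contained way is essentially a reproof of the Bobkov--Ledoux result. So the proposal is a correct reduction plus a sensible proof sketch of the reduced statement, but not a complete proof: you should either supply the uniform delta-method/tail estimate, or cite the asymptotics of $\sum_i\var(Z_{(i)})$ (or of $\e\,\mcW_2^2$ for the Gaussian) as the paper does.
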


\begin{proof}
Let $G_k$ and $G'_k$ be the empirical distribution functions of $Z_1,\ldots,Z_k$ and $Z'_1,\ldots,Z'_k$, respectively. We have
\[
\dfrac{1}{\log\log k}\sum_{i=1}^k \big(Z_{(i)}-Z'_{(i)}\big)^2=\dfrac{k}{\log\log k} \int_0^1 (G_k^{-1}(x)-G_k'^{-1}(x))^2\D x=\dfrac{k\mcW_2^2(G_k,G'_k)}{\log\log k},
\]
where the expectation of the last term converges to $2$ by~\cite[Rem.~2.4]{MR4059190}. Also, note that
\[
\sum_{i=1}^k \big((Z_{(i)}-\ov Z_k)-(Z'_{(i)}-\ov Z'_k)\big)^2=\sum_{i=1}^k \big(Z_{(i)}-Z'_{(i)}\big)^2-k\big(\ov Z_k-\ov Z'_k\big)^2,
\]
where the difference $\ov Z_k-\ov Z'_k$ has distribution $\mcN(0,2/k)$.
\end{proof}

We are now ready to upper bound the mean squared maximal distance between the bridges corresponding to $W$ and $\wh W$ over the grid with mesh size~$1/k$.
Note that both processes depend on~$k$ as well.

\begin{lemma}
\label{lem:Brownian_motions}
For the Brownian motions $W$ and $\wh W$ defined in~\eqref{eq:W} and~\eqref{eq:W0}  using $k\ges 1$ increments it holds that
\[
\e\max_{1\les i\les k} \abs*{[W(\tfrac{i}{k})-\tfrac{i}{k}W(1)]-[\wh W(\tfrac{i}{k})-\tfrac{i}{k}\wh W(1)]}^2
=\Oh(\log\log k/k),\quad k\to\infty,
\]
uniformly for all processes $X$.
\end{lemma}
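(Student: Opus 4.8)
The plan is to reduce the claim to the i.i.d.\ normal setting of Lemma~\ref{lem:standard_normal_rvs} by expressing both bridge-skeletons in terms of the ordered Brownian increments. Write $\Delta_i^k W'=:\tfrac{1}{\sqrt k}Z_i'$ with $Z_1',\ldots,Z_k'$ i.i.d.\ standard normal (this uses that $W'(t)-tW'(1)$ is a bridge independent of $W'(1)$, so the increments of the \emph{bridge} $W'(t)-tW'(1)$ over the grid, i.e.\ $\tfrac{1}{\sqrt k}Z_i'-\tfrac1k W'(1)$, together with $W'(1)$ determine everything; more simply the increments $\Delta_i^k W'$ are i.i.d.\ $\mcN(0,1/k)$). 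Similarly write $\xi_i=h_k(\Delta_i^k X,U_i)=:\tfrac1{\sqrt k}Z_i$ where, because $h_k$ is a monotone (comonotonic) transform and $\xi_i$ has law $\mcN(0,1/k)$, the $Z_i$ are i.i.d.\ standard normal. By the crucial point established in Lemma~\ref{lem:BM}, the permutation $\pi$ sorts the $\Delta_i^kW'$ into the same order as the $\xi_i$; hence the reordered increments $\Delta_i^k W=\Delta_{\pi(i)}^k W'$ and the increments $\Delta_i^k\wh W=\xi_i$ satisfy $\Delta_i^k W=\tfrac1{\sqrt k}Z'_{(r_i)}$ and $\Delta_i^k\wh W=\tfrac1{\sqrt k}Z_{(r_i)}$ for the common rank vector $r=(r_1,\ldots,r_k)$ of the $\xi_i$'s. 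Thus, for every $i$,
\[
[W(\tfrac ik)-\tfrac ik W(1)]-[\wh W(\tfrac ik)-\tfrac ik\wh W(1)]
=\sum_{j\les i}\Big(\Delta_j^k W-\tfrac1k W(1)\Big)-\sum_{j\les i}\Big(\Delta_j^k\wh W-\tfrac1k\wh W(1)\Big),
\]
and since $W(1)=\sum_j\Delta_j^k W=\tfrac1{\sqrt k}\sum_j Z'_j$ and likewise $\wh W(1)=\tfrac1{\sqrt k}\sum_j Z_j$, the $i$-th term equals $\tfrac1{\sqrt k}\sum_{j\les i}\big[(Z'_{(r_j)}-\ov Z'_k)-(Z_{(r_j)}-\ov Z_k)\big]$.

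Next I would bound the maximum of these partial sums by a single sum of squares. Let $a_\ell:=(Z'_{(\ell)}-\ov Z'_k)-(Z_{(\ell)}-\ov Z_k)$, so that $\sum_\ell a_\ell=0$ and the quantity in question is $\tfrac1{\sqrt k}\max_{1\les i\les k}\big|\sum_{j\les i}a_{r_j}\big|$. Since $(r_1,\ldots,r_k)$ is a permutation of $(1,\ldots,k)$, the reordered sequence $(a_{r_1},\ldots,a_{r_k})$ is just a permutation of $(a_1,\ldots,a_k)$, and any partial sum of a zero-sum sequence is bounded in absolute value by the positive part of the total, hence $\max_i\big|\sum_{j\les i}a_{r_j}\big|\les \sum_{\ell=1}^k|a_\ell|$. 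Combined with Cauchy--Schwarz this gives the crude but sufficient bound
\[
\e\max_{1\les i\les k}\big|[W(\tfrac ik)-\tfrac ik W(1)]-[\wh W(\tfrac ik)-\tfrac ik\wh W(1)]\big|^2
\les \frac1k\,\e\Big(\sum_{\ell=1}^k|a_\ell|\Big)^2\les \e\sum_{\ell=1}^k a_\ell^2 .
\]
By Lemma~\ref{lem:standard_normal_rvs}, $\e\sum_{\ell=1}^k a_\ell^2=(2+\oh(1))\log\log k$, so the right-hand side is $\Oh(\log\log k)$ --- which is off by a factor of $k$. So this naive route loses too much; a $\max_i$ over $k$ terms genuinely needs an Etemadi/Lévy-type maximal inequality rather than the trivial zero-sum bound.

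The fix, and the \textbf{main obstacle}, is to avoid the factor $k$ coming from $(\sum|a_\ell|)^2\les k\sum a_\ell^2$. Here one should exploit that the reordering $r$ is \emph{not} arbitrary but is the rank vector of the $\xi_i$'s, and that conditionally on the order statistics $Z_{(1)},\ldots,Z_{(k)}$ (equivalently on the unordered set $\{\xi_i\}$) the rank vector $r$ is a \emph{uniformly random} permutation independent of the values $a_\ell$ --- indeed, conditioning on $\{\Delta_i^kX\}$, the assignment of labels $i$ to order statistics is uniform, and the $Z'$-side is independent of all of this. Therefore $\sum_{j\les i}a_{r_j}$ is (conditionally) a partial sum of a uniformly randomly shuffled zero-sum sequence, for which the expected squared maximum is controlled: by a maximal inequality for sampling without replacement (e.g.\ Serfling's inequality, or Lévy's inequality applied after symmetrisation), $\e\big[\max_{i}\big(\sum_{j\les i}a_{r_j}\big)^2\,\big|\,\{a_\ell\}\big]\les C\sum_{\ell}a_\ell^2$ with an \emph{absolute} constant $C$, with no extra factor of $k$. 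Taking expectations and invoking Lemma~\ref{lem:standard_normal_rvs} then yields $\e\max_i|\cdots|^2\les \tfrac1k\,C\,\e\sum_\ell a_\ell^2=\Oh(\log\log k/k)$, and the bound is uniform in $X$ because $C$ is absolute and Lemma~\ref{lem:standard_normal_rvs} does not involve $X$ at all. I would therefore structure the proof as: (i) rewrite the bridge difference via ranks as above; (ii) condition on the unordered increments of $X$ to see $r$ as a uniform shuffle independent of $(a_\ell)$; (iii) apply a without-replacement maximal inequality to get the conditional bound $C\sum_\ell a_\ell^2$; (iv) take expectations and close with Lemma~\ref{lem:standard_normal_rvs}. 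The only delicate point is justifying the conditional independence/uniformity in step (ii) in the presence of ties (the compound-Poisson-with-drift case), which is handled exactly by the auxiliary uniforms $U_1,\ldots,U_k$ built into both constructions.
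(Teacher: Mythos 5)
Your proof is correct and follows essentially the same route as the paper's: the rank/order-statistic rewriting and reduction to Lemma~\ref{lem:standard_normal_rvs}, followed by a maximal inequality for partial sums of a uniformly random permutation of a zero-sum sequence applied conditionally on the values. The paper's name for the maximal inequality you want is Garcia's (Garsia's) inequality, $\tfrac{1}{k!}\sum_{\sigma\in S_k}\max_i|x_{\sigma(1)}+\cdots+x_{\sigma(i)}|^p\les C_p(\sum_i x_i^2)^{p/2}$ for zero-sum $(x_i)$, which gives exactly the conditional bound with an absolute constant that you attribute to Serfling/L\'evy; the paper then uses exchangeability of the pairs $(\wh\eta_i,\eta_i)$ in place of your explicit conditioning on the unordered increments, but these are two phrasings of the same step.
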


\begin{proof}
Observe that the bridge $W(t)-tW(1)$ is obtained by sampling the bridge $W'(t)-tW'(1)$ independently and then reordering its increments (together with in-between evolutions) according to the increments of the bridge $\wh W(t)-t\wh W(1)$, see Lemma~\ref{lem:BM} in particular. Here we rely on the obvious fact that adding a linear trend has no influence on the ordering of increments.
Denoting the increments by
\[
\wh\eta_i=\Delta_i^k\wh W-\Delta_i^kt\cdot\wh W(1),\qquad \eta_i=\Delta_i^kW-\Delta_i^kt\cdot W(1),
\]
observe, using the notation of Lemma~\ref{lem:standard_normal_rvs}, that the vectors $(\sqrt{k}\wh\eta_i)$ and $(\sqrt{k}\eta_i)$ jointly have the law of  $(Z_i-\ov Z_k)$ and $(Z'_i-\ov Z'_k)$ with the latter reordered according to the former.
Since the summation order is arbitrary, we find using Lemma~\ref{lem:standard_normal_rvs} that
\begin{equation}\label{eq:bridges}
\e\sum_{i=1}^k \big(\sqrt{k}\wh\eta_i-\sqrt{k}\eta_i\big)^2=\e\sum_{i=1}^k \big((Z_{(i)}-\ov Z_k)-(Z'_{(i)}-\ov Z'_k)\big)^2=\Oh(\log\log k).
\end{equation}
Note that this quantity does not depend on the underlying process~$X$.

Furthermore, the pairs $(\wh\eta_i,\eta_i)$ are exchangeable and
\[
(\wh\eta_1-\eta_1)+\ldots+(\wh\eta_k-\eta_k)=(\wh\eta_1+\ldots+\wh\eta_k)- (\eta_1+\ldots+\eta_k)=0.
\]
We are going to use Garsia's inequality (see~\cite[p.~26]{chobanyan_levental_salehi} and references therein), which states that for any $p\ges 1$ and any $x_1,\ldots,x_k\in\mbR$ such that $x_1+\ldots+x_k=0$ we have
\[
\dfrac{1}{k!}\sum_{\sigma\in S_k} \max_{1\les i\les k} \abs{x_{\sigma(1)}+\ldots+x_{\sigma(i)}}^p\les C_p\left(\sum_{i=1}^k x_i^2\right)^{p/2},
\]
where the sum is taken over all permutations $\sigma$ of order $k$ and the constant $C_p>0$ depends only on $p$. We apply this inequality to $p=2$ and $x_i=\wh\eta_i-\eta_i$, and take expectations of both sides. Owing to the exchangeability, the expectations of all maximums are equal. Therefore, we obtain
\[
\e\max_{1\les i\les k} \abs*{(\wh\eta_1+\cdots+\wh\eta_i)- (\eta_1+\cdots+\eta_i)}^2\les C_2\e\sum_{i=1}^k (\wh\eta_i-\eta_i)^2,
\]
which is $\Oh(\log\log k/k)$ according to~\eqref{eq:bridges}.
\end{proof}

\subsection{Discretization error}
Finally, we need appropriate bounds on the mean-squared-maximal discretization error of the involved processes.
For an integer $k\ges 1$ let $h^{[k]}(t)=h(\RD{kt}/k)$ be a discretization of the function $h$ over the grid $\{0,1/k,2/k,\ldots,1\}$.
It is a basic fact that a standard Brownian motion $B$ satisfies
\begin{equation}
\label{eq:BM_disc}
\e\sup_{t\in [0,1]} \big(B(t)-B^{[k]}(t)\big)^2=\Oh(\log k/k),\quad k\to\infty,
\end{equation}
see~\cite[Lem.~4.4]{petterson_95}. This also can be seen using extreme value theory and the scaling property of~$B$.
We are now ready to extend the result of Lemma~\ref{lem:Brownian_motions} to the whole interval~$[0,1]$.

\begin{proposition}\label{prop:bridges}
For the Brownian motions $W$ and $\wh W$ defined in~\eqref{eq:W} and~\eqref{eq:W0}  using $k\ges 1$ increments it holds that
\[
\e\sup_{t\in[0,1]} \abs*{[W(t)-tW(1)]-[\wh W(t)-t\wh W(1)]}^2=\Oh(\log k/k)
\]
as  $k\to\infty$, uniformly for all processes $X$.
\end{proposition}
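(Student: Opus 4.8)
The plan is to reduce the supremum over the whole interval $[0,1]$ to the supremum over the grid $\{0,1/k,\ldots,1\}$, which is already controlled by Lemma~\ref{lem:Brownian_motions}, together with the separate discretization errors of the two Brownian motions $W$ and $\wh W$. Write $A(t)=W(t)-tW(1)$ and $\wh A(t)=\wh W(t)-t\wh W(1)$ for the two bridges, so that the target quantity is $\e\sup_{t\in[0,1]}\abs{A(t)-\wh A(t)}^2$, and for a function $f$ on $[0,1]$ let $f^{[k]}(t)=f(\RD{kt}/k)$ denote its discretization over the grid. First I would apply the triangle inequality,
\[
\sup_{t\in[0,1]}\abs{A(t)-\wh A(t)}\les\sup_{t\in[0,1]}\abs{A^{[k]}(t)-\wh A^{[k]}(t)}+\sup_{t\in[0,1]}\abs{A(t)-A^{[k]}(t)}+\sup_{t\in[0,1]}\abs{\wh A(t)-\wh A^{[k]}(t)},
\]
and use $(a+b+c)^2\les 3(a^2+b^2+c^2)$, so that it suffices to bound the expected square of each of the three terms by $\Oh(\log k/k)$.

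The first term is a pure grid quantity: $A^{[k]}(t)-\wh A^{[k]}(t)$ ranges over the values $[W(\tfrac ik)-\tfrac ik W(1)]-[\wh W(\tfrac ik)-\tfrac ik\wh W(1)]$ for $i=0,\ldots,k$, and the $i=0$ value is~$0$; hence its supremum equals $\max_{1\les i\les k}\abs*{[W(\tfrac ik)-\tfrac ik W(1)]-[\wh W(\tfrac ik)-\tfrac ik\wh W(1)]}$, whose expected square is $\Oh(\log\log k/k)$ by Lemma~\ref{lem:Brownian_motions}, hence a fortiori $\Oh(\log k/k)$.

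For the second term the key observation is that $W$ is a genuine standard Brownian motion by Lemma~\ref{lem:BM}, even though it is built from $X$ by reordering increments. Since $A(t)-A^{[k]}(t)=\bigl(W(t)-W^{[k]}(t)\bigr)-\bigl(t-\RD{kt}/k\bigr)W(1)$ and $0\les t-\RD{kt}/k<1/k$, we get
\[
\sup_{t\in[0,1]}\abs{A(t)-A^{[k]}(t)}\les\sup_{t\in[0,1]}\abs{W(t)-W^{[k]}(t)}+\tfrac1k\abs{W(1)},
\]
and applying~\eqref{eq:BM_disc} to the Brownian motion $W$ together with $\e W(1)^2=1$ shows that the expected square of the right-hand side is $\Oh(\log k/k)+\Oh(1/k^2)=\Oh(\log k/k)$. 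The third term is treated identically with $\wh W$ (also a standard Brownian motion by Lemma~\ref{lem:BM}) in place of $W$. Combining the three bounds yields the assertion, and uniformity in $X$ is automatic: Lemma~\ref{lem:Brownian_motions} is already uniform in $X$, while~\eqref{eq:BM_disc} is an intrinsic property of Brownian motion that does not involve $X$ at all.

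I do not expect a serious obstacle here once the statement is decomposed this way; the only things to watch are that one genuinely may invoke~\eqref{eq:BM_disc} for the process $W$ itself (this is precisely what Lemma~\ref{lem:BM} provides), and that the linear-drift corrections $\bigl(t-\RD{kt}/k\bigr)W(1)$ and its hatted analogue contribute only at order $1/k$ in $L^2$ and are therefore negligible. It is also worth noting, as a sanity check, that the slowly growing factor is $\log\log k$ on the grid and only worsens to $\log k$ through the in-between (discretization) fluctuations of the two Brownian motions, matching the heuristic that at scale $1/k$ the behaviour within the $k$ subintervals is that of $k$ essentially independent Brownian bridges, whose maximal squared sup-norm is of order $\log k/k$.
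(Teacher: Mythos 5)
Your proposal is correct and follows essentially the same argument as the paper: the paper's proof also splits the sup into the grid maximum (handled by Lemma~\ref{lem:Brownian_motions}) plus the discretization errors $\sup_t\abs{W(t)-W^{[k]}(t)}$, $\tfrac1k\abs{W(1)}$, $\sup_t\abs{\wh W(t)-\wh W^{[k]}(t)}$, $\tfrac1k\abs{\wh W(1)}$, which are then bounded via~\eqref{eq:BM_disc} using that $W$ and $\wh W$ are both standard Brownian motions. You have merely made explicit the algebra identifying $A(t)-A^{[k]}(t)$ with $W(t)-W^{[k]}(t)-(t-\RD{kt}/k)W(1)$, which the paper leaves implicit in the phrase ``an upper bound on the discretization of the two bridges.''
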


\begin{proof}
Observe the following upper bound:
\[\begin{multlined}
\sup_{t\in[0,1]} \abs*{[W(t)-tW(1)]-[\wh W(t)-t\wh W(1)]}
\les\max_{1\les i\les k} \abs*{[W(\tfrac{i}{k})-\tfrac{i}{k}W(1)]-[\wh W(\tfrac{i}{k})- \tfrac{i}{k}\wh W(1)]}\\
+\sup_{t\in [0,1]} \abs{W(t)-W^{[k]}(t)}+\tfrac{1}{k}\abs{W(1)}+\sup_{t\in [0,1]} \abs{\wh W(t)-\wh W^{[k]}(t)}+\tfrac{1}{k}\abs{\wh W(1)},
\end{multlined}\]
where the second line is an upper bound on the discretization of the two bridges.
It is sufficient to bound the second moments of all the terms, which is achieved by Lemma~\ref{lem:Brownian_motions} and~\eqref{eq:BM_disc}.
\end{proof}

With respect to the discretization of the process~$X$ we note that $\sup_{t\in[0,1]}\abs{X(t)-X^{[k]}(t)}$ converges to the size of the largest jump a.s., which has been the motivation for studying the integrated discretization error in~\cite{jacod_memin}. Hence the mean-squared-maximal error can be small only when $X$ is close to a Brownian motion in some sense. 
We need the following bound in terms of $\mu_4$, see also Theorem~\ref{thm:X-B} below, even though various alternative bounds may be easier to establish.
The quantity $\mu_4$ is normally small when $X$ is close to a Brownian motion, see Lemma~\ref{lem:mu4} for a precise result.

\begin{proposition}
\label{prop:Levy_disc}
There is a constant $C>0$ such that for any L{\'e}vy process $X$ satisfying~\eqref{eq:main_assumptions} and for any $k\ges 2$ the following bound holds:
\[
\e\sup_{t\in [0,1]} \big(X(t)-X^{[k]}(t)\big)^2\les C(k\mu_4+\log k/k).
\]
\end{proposition}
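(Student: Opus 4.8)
The plan is to split the discretization error of $X$ over each mesh interval into a ``martingale/diffusion'' part that enjoys the usual Brownian-type scaling and a ``large jumps'' part that is controlled by $\mu_4$. First I would fix $k\ges 2$ and, on each interval $[(i-1)/k,i/k]$, write $X$ restricted to that interval (recentred) as the sum of its continuous martingale component plus compensated small jumps, call it $M^{(i)}$, and a remainder built from the jumps; the point is that $\e\sup_{s\le 1/k}\abs{M^{(i)}(s)}^2$ scales like $\log(k)/k$ up to a constant by the same extreme-value/scaling heuristic behind~\eqref{eq:BM_disc}, after using Doob's $L^2$ inequality on each interval and a union bound over the $k$ intervals (this produces the $\log k/k$ term). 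Actually, since we only need the $\sup$ of $X(t)-X^{[k]}(t)$ and $X^{[k]}$ is piecewise constant, on each interval the relevant quantity is $\sup_{(i-1)/k<t\le i/k}\abs{X(t)-X((i-1)/k)}$, so I would bound $\e\max_{1\le i\le k}\sup_{t}\abs{X(t)-X((i-1)/k)}^2$.

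Next I would handle the jump contribution. The natural device is to bound the maximal increment by the sum of all jump sizes to the fourth power (or second power, then optimise): since for any path on $[0,1]$ one has $\sup_t\abs{X(t)-X^{[k]}(t)}\le \sup_t\abs{M(t)-M^{[k]}(t)}+2\sup_{|s-r|\le 1/k}\abs{(X-M)(s)-(X-M)(r)}$ where $X-M$ is the pure-jump-of-large-jumps plus drift part; but more efficient is to keep \emph{all} jumps together and use that the maximal oscillation of the compensated jump process over a window of length $1/k$ is bounded, via Doob and the BDG-type second-moment identity, by a constant times $\big(\tfrac1k\int x^2\Pi(\D x)\big)^{1/2}$ on each interval — however this only gives $\Oh(1/\sqrt k)$, not $k\mu_4$. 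To get the sharper $k\mu_4$ I would instead bound the \emph{fourth} moment: by the same argument $\e\max_i\sup_t\abs{X(t)-X((i-1)/k)}^4$, after a union bound over $i$, is $\le k\cdot\e\sup_{s\le 1/k}\abs{X(s)}^4$, and by the moment estimates for Lévy processes (the $\mu_4$ appears as the leading term in $\e X(s)^4 = 3(\sigma^2 s)^2 + s\mu_4 + \Oh(s^2\cdot\text{lower})$, cf.\ the computation behind Lemma~\ref{lem:Rio} and the use of~\cite{MR2479503}) one gets $\e\sup_{s\le 1/k}\abs{X(s)}^4 = \Oh(1/k^2 + \mu_4/k)$, so the union bound yields $\Oh(1/k + \mu_4)$ for the fourth moment — still not matching. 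Let me reconsider: the clean route is to apply the argument directly to the second moment but separate scales. Write $X = \sigma W_X + J$ with $J$ the compensated jump martingale; then $\e\sup_{t}\abs{X(t)-X^{[k]}(t)}^2 \le 2\sigma^2\e\sup_t\abs{W_X(t)-W_X^{[k]}(t)}^2 + 2\e\sup_t\abs{J(t)-J^{[k]}(t)}^2$. The first term is $\Oh(\log k/k)$ by~\eqref{eq:BM_disc} and $\sigma^2\le 1$. For the second, union-bound over intervals: $\e\sup_t\abs{J(t)-J^{[k]}(t)}^2 \le k\,\e\sup_{s\le 1/k}\abs{J(s)}^2$; and here I would \emph{not} use Doob naively (which gives $\e\sup\abs{J(s)}^2\le 4\e J(1/k)^2 = 4\mu_2/k$, summing to $\Oh(1)$) but rather truncate $J$ at a level depending on $k$: small jumps contribute a martingale whose maximum has second moment $\Oh(\delta^2\text{-ish}/k)$ and large jumps are rare, a compound Poisson with rate $\ov\Pi(\delta)$, whose maximal jump on a window of length $1/k$ is squared-expectation-bounded using $\int_{|x|>\delta}x^2\Pi(\D x)$ and then $x^2\le x^4/\delta^2$ converts this to $\mu_4/\delta^2$. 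Choosing $\delta \asymp 1$... this still needs care.

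The cleanest correct argument, which I expect the authors use and which I would adopt: bound $\sup_{t}\abs{X(t)-X^{[k]}(t)}^2$ pathwise by $2\big(\sup_t\abs{X(t)-X^{[k]}(t)}^2 - (\text{largest jump})^2\big)_+ + 2(\text{largest jump})^2$ is circular, so instead use: for each $i$, $\sup_{(i-1)/k<t\le i/k}\abs{X(t)-X(\tfrac{i-1}{k})}^2 \le \big(\sup_t \abs{X^c\text{-part oscillation}} + \sup_t\abs{\text{jump-mart oscillation}}\big)^2$, apply Doob to each interval for the jump-martingale part to get second moment $\le 4\mu_2/k$ \emph{per interval}, but then observe $\e\max_{i}(\cdot)\le \e\sum_i(\cdot)$ is too lossy; instead $\e\max_i Y_i \le (\e\max_i Y_i^2)^{1/2}$ and bound $\e\max_i Y_i^2$ where $Y_i=\sup_t\abs{J(t)-J(\tfrac{i-1}{k})}$ by the fourth-moment union bound $\le (k\,\e\sup_{s\le1/k}\abs{J(s)}^4)^{1/2}\le (Ck(\mu_2^2/k^2+\mu_4/k))^{1/2}=\Oh(1/\sqrt k + \sqrt{\mu_4})$... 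The main obstacle, and where I would focus the real work, is precisely getting the $k\mu_4$ (rather than $\sqrt{\mu_4}$ or $1/\sqrt k$) on the jump side: the resolution is that one must \emph{not} take a maximum over intervals for the jump part at all — since $X^{[k]}$ only resets at grid points, $\sup_t\abs{X(t)-X^{[k]}(t)}$ for the jump part is at most $\sum_{\text{all jumps}}\abs{\Delta X_s}$ is too crude, but $\big(\sup_t\abs{X(t)-X^{[k]}(t)}\big)^2\le 2\sup_t\abs{M(t)-M^{[k]}(t)}^2 + 2\big(\sup_{|s-r|\le 1/k}\abs{J^{\text{fin.var.}}(s)-J^{\text{fin.var.}}(r)}\big)^2$ and for a finite-variation process the oscillation over windows of size $1/k$ is bounded by the total variation restricted near large jumps. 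I would therefore: (1) truncate jumps at level $1$ (or any fixed level), compound-Poissonise the large ones; (2) bound the continuous-martingale-plus-small-jumps part's discretization error by $\Oh(\log k/k)$ via Doob $+$~\eqref{eq:BM_disc}-type scaling, noting its variance parameter is $\le 1$; (3) for the large-jump compound Poisson $C$ on $[0,1]$ with intensity $\lambda=\ov\Pi(1)\le\mu_4$ and jump law $\nu(\D x)=\Pi(\D x)/\lambda$ on $\{|x|>1\}$, write $\e\sup_t\abs{C(t)-C^{[k]}(t)}^2$: conditionally on there being $N$ jumps, each grid interval contains at most (generically) one jump and the error on that interval is at most the jump size plus the drift compensation $\tfrac{\lambda\e|\text{jump}|}{k}$; uncondition to get $\le \e[\sum_{\text{large jumps}}\abs{\Delta X_s}^2] + \Oh(1/k) = \int_{|x|>1}x^2\Pi(\D x) + \Oh(1/k)\le \mu_4+\Oh(1/k)$, using $x^2\le x^4$ for $|x|>1$. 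Absorbing the cross term $k\mu_4$ arises from optimising the truncation level, and combining (2)+(3) with Cauchy–Schwarz on a cross term gives the claimed $C(k\mu_4+\log k/k)$. Thus the skeleton is: pathwise split, Doob on each piece, union bound over the $k$ intervals for the continuous part (source of $\log k/k$), and a direct second-moment estimate for the truncated large-jump part (source of $k\mu_4$), with the truncation level balancing the two.
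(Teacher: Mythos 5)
Your high-level plan (Lévy–Itô split into Brownian, small jumps, large jumps; a $\log k/k$ term from the Brownian-type part and a $\mu_4$-controlled term from the jumps) matches the paper's, but the execution has a real gap that your own hedging already hints at.

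The fatal step is the claim that the ``continuous-martingale-plus-small-jumps part'' — with jumps truncated at a fixed level $1$ — has discretization error $\Oh(\log k/k)$ via Doob plus \eqref{eq:BM_disc}-type scaling. Doob's $L^2$ inequality applied per interval gives $\e\sup_{s\le 1/k}|Z(s)|^2\le 4\sigma_Z^2/k$, and a union bound over the $k$ intervals then only yields $\e\max_i(\cdot)\le\sum_i(\cdot)=\Oh(1)$, not $\Oh(\log k/k)$. The mechanism behind \eqref{eq:BM_disc} is sub-Gaussian tails for the per-interval maximum, and a pure-jump martingale with jumps up to size $1$ simply does not have them on that scale — think of a compensated unit-rate Poisson with unit jumps, whose discretization error stays $\Oh(1)$. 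To get $\log k/k$ one needs two things you do not use: the truncation level for ``small jumps'' must shrink with $k$ (the paper takes $\ve=(k\log k)^{-1/2}$), and the per-interval maximum must be controlled by an exponential-martingale / Bernstein-type bound exploiting the jump bound $\ve$ (Lemma~\ref{lem:tail_estimate}), after which extreme-value asymptotics for maxima of $k$ exponentials produce the $\log^2 k/u^2$ numerator that optimising $u=\sqrt{k\log k}$ turns into $\log k/k$ (Lemma~\ref{lem:mart_disc}).

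Once the cutoff is $\ve\asymp(k\log k)^{-1/2}$ rather than $1$, the complementary ``large jump'' martingale is not the $|x|>1$ compound Poisson you analyse but the $|x|>\ve$ one, whose intensity blows up as $k\to\infty$; your bound $\e\sup_t|C(t)-C^{[k]}(t)|^2\lesssim\mu_4+\Oh(1/k)$, which rests on $x^2\le x^4$ for $|x|>1$, no longer applies. This is precisely where the $k\mu_4$ term comes from: the paper (Lemma~\ref{lem:CPP_disc}) applies Doob's $L^4$ inequality per interval, computes $\e X^4(1/k)$ from cumulants, uses $\int_{|x|\ge\ve}x^2\Pi(\D x)\le\ve^{-2}\mu_4$, and after Jensen gets $\Oh(\ve^{-2}\mu_4/\sqrt k+\sqrt{\mu_4})$, which with $\ve=(k\log k)^{-1/2}$ becomes $\Oh(\sqrt k\log k\cdot\mu_4+\sqrt{\mu_4})\les\Oh(k\mu_4+\log k/k)$. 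Your final paragraph gestures at ``optimising the truncation level'' and ``Cauchy--Schwarz on a cross term'', but without the Bernstein/exponential-martingale estimate for the bounded-jump piece and the $L^4$/cumulant estimate for the unbounded-jump piece there is nothing to optimise, and no amount of truncation tuning repairs a Doob-$L^2$-plus-union-bound argument that already saturates at $\Oh(1)$.
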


The proof is rather lengthy so we postpone it to Appendix~\ref{app:bound}.

\section{Asymptotic quality of coupling methods}
\label{sec:main_results}
\subsection{Main results}
Given a sequence of L\'evy processes $X_n$ and integers $k_n\ges 1$, let $W_n$ and $\wh W_n$ be the Brownian motions constructed using, respectively, the reordering method and comonotonic coupling of increments with the common discretization parameter~$k_n$. Note that the coupling of $X(1)$ and $W(1)$ is arbitrary.
We start by showing that the two couplings have asymptotically equivalent quality given that the respective coupling of the end-points is sufficiently good.

\begin{theorem}
\label{thm:basic}
Consider a sequence of L\'evy processes $X_n$ and a sequence of integers $k_n\to\infty$. If $\ve_n\downarrow 0$ is such that $\ve_nk_n/\log k_n$ is bounded away from~$0$ and
\[
\e\abs{X_n(1)-W_n(1)}^2=\Oh(\ve_n),\qquad
\e\sup_{t\in [0,1]} \abs{X_n(t)-\wh W_n(t)}^2=\Oh(\ve_n),
\]
then also
\[
\e\sup_{t\in [0,1]} \abs{X_n(t)-W_n(t)}^2=\Oh(\ve_n).
\]
This result is also true when $W$ and $\wh W$ are swapped.
\end{theorem}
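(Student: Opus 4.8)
The plan is to bound the target quantity by a triangle inequality that inserts both $\wh W_n$ and the two Brownian bridges. Write
\[
\sup_{t\in[0,1]}\abs{X_n(t)-W_n(t)}\les \sup_{t\in[0,1]}\abs{X_n(t)-\wh W_n(t)}+\sup_{t\in[0,1]}\abs{\wh W_n(t)-W_n(t)}.
\]
The first term on the right is $\Oh(\ve_n)$ in mean square by hypothesis, so the whole burden is to show $\e\sup_{t}\abs{\wh W_n(t)-W_n(t)}^2=\Oh(\ve_n)$. For this I would split $\wh W_n(t)-W_n(t)$ into its bridge part and its endpoint-drift part:
\[
\wh W_n(t)-W_n(t)=\big([\wh W_n(t)-t\wh W_n(1)]-[W_n(t)-tW_n(1)]\big)+t\,(\wh W_n(1)-W_n(1)).
\]
The supremum of the second summand is just $\abs{\wh W_n(1)-W_n(1)}$, and the supremum of the first summand has mean square $\Oh(\log k_n/k_n)$ by Proposition~\ref{prop:bridges}, uniformly in $X_n$. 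Since $\ve_nk_n/\log k_n$ is bounded away from $0$, we have $\log k_n/k_n=\Oh(\ve_n)$, so this bridge term is $\Oh(\ve_n)$ as required.

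It remains to control $\e\abs{\wh W_n(1)-W_n(1)}^2$. Here I use the key structural fact, recorded after Lemma~\ref{lem:BM}, that we have constructed a single trivariate process $(X_n,W_n,\wh W_n)$ with $W_n(1)=h(X_n(1),U)$, whereas $\wh W_n(1)=\sum_{i=1}^{k_n}\xi_i$ is the sum of the comonotonically coupled Brownian increments, which need not equal $W_n(1)$. Bound
\[
\abs{\wh W_n(1)-W_n(1)}\les \abs{\wh W_n(1)-X_n(1)}+\abs{X_n(1)-W_n(1)},
\]
where the second term is $\Oh(\ve_n)$ in mean square by the first hypothesis, and the first term is dominated by $\sup_{t}\abs{\wh W_n(t)-X_n(t)}$, hence $\Oh(\ve_n)$ in mean square by the second hypothesis. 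Collecting the estimates and using $(a+b+c)^2\les 3(a^2+b^2+c^2)$ gives $\e\sup_t\abs{X_n(t)-W_n(t)}^2=\Oh(\ve_n)$.

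The statement with $W$ and $\wh W$ swapped follows by the identical argument: the roles of the two Brownian motions are symmetric once we observe that Proposition~\ref{prop:bridges} is symmetric in $W$ and $\wh W$, and that the hypotheses now read $\e\abs{X_n(1)-\wh W_n(1)}^2=\Oh(\ve_n)$ (which, via $\wh W_n(1)=\sum_i\xi_i$ comonotone with $X_n(1)$, is in fact automatically controlled) and $\e\sup_t\abs{X_n(t)-W_n(t)}^2=\Oh(\ve_n)$, concluding that $\e\sup_t\abs{X_n(t)-\wh W_n(t)}^2=\Oh(\ve_n)$. The main obstacle is conceptual rather than computational: one must recognize that $W_n(1)$ and $\wh W_n(1)$ differ (reordering the bridge increments preserves the endpoint, but the comonotonic increments $\xi_i$ sum to something else), and that the gap between them is tamed precisely by routing through $X_n(1)$ using both hypotheses simultaneously — this is exactly why the trivariate coupling, rather than two separate bivariate couplings, is needed.
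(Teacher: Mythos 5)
Your proof follows essentially the same route as the paper's: the same two-level triangle inequality (through $\wh W_n$ and then splitting into bridge and endpoint parts), the same invocation of Proposition~\ref{prop:bridges} together with the hypothesis $\ve_nk_n/\log k_n$ bounded away from $0$, and the same routing of $\abs{W_n(1)-\wh W_n(1)}$ through $X_n(1)$ via both hypotheses. One small inaccuracy in your parenthetical aside about the swapped case: $\wh W_n(1)=\sum_i\xi_i$ is \emph{not} comonotonically coupled with $X_n(1)$---only the individual increments $\xi_i$ are comonotone with $\Delta_i^{k_n}X_n$, and summing comonotone pairs does not produce a comonotone pair of sums; indeed $\e\abs{X_n(1)-\wh W_n(1)}^2=k_n\mcW_2^2(X_n(1/k_n),B(1/k_n))$ is of order $k_n\mu_{4,n}$ rather than $\mu_{4,n}$, so it is not ``automatically'' of optimal order. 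This does not affect the validity of your argument, since in the swapped statement that bound is simply assumed.
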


\begin{proof}
Start with an obvious bound
\[
\e\sup_{t\in [0,1]} \abs{X_n(t)-W_n(t)}^2\les 2\e\sup_{t\in [0,1]} \abs{X_n(t)-\wh W_n(t)}^2+2\e\sup_{t\in [0,1]}\; \abs{W_n(t)-\wh W_n(t)}^2.
\]
In view of the assumptions, it is only required to consider the second term
\begin{equation*}
\begin{gathered}
\e\sup_{t\in [0,1]} \abs{W_n(t)-\wh W_n(t)}^2\les 2\e\sup_{t\in [0,1]} \abs{[W_n(t)-tW_n(1)]-[\wh W_n(t)-t\wh W_n(1)]}^2\\
\quad+2\e\abs{W_n(1)-\wh W_n(1)}^2.
\end{gathered}
\end{equation*}
According to Proposition~\ref{prop:bridges} we find that the first term on the right hand side is $\Oh(\log k_n/k_n)=\Oh(\ve_n)$.
Finally,
\[
\e\abs{W_n(1)-\wh W_n(1)}^2\les 2\e\abs{X_n(1)-W_n(1)}^2+2\e\abs{X_n(1)-\wh W_n(1)}^2= \Oh(\ve_n),
\]
by assumption. The final statement is proven analogously.
\end{proof}

Observe that, for comonotonically coupled end-points $W_n(1)$ and $X_n(1)$, the assumption on their mean squared distance is automatically satisfied whenever $\mcW_2(W_n(1),X_n(1))\to0$. This follows from the optimality of the comonotonic coupling. Next we state an upper bound on the quality of the auxiliary method based on comonotonic coupling of increments.

\begin{theorem}
\label{thm:X-B}
Let $X_n$ be a sequence of L\'evy processes satisfying~\eqref{eq:main_assumptions} with L\'evy measures $\Pi_n$ such that
\[
\mu_{4,n}\coloneqq\int_\mbR x^4\Pi_n(\D x)\to 0.
\]
Then for any sequence $k_n\to\infty$ we have
\[
\e\sup_{t\in [0,1]} \abs{X_n(t)-\wh W_n(t)}^2=\Oh(k_n\mu_{4,n}+\log k_n/k_n).
\]
\end{theorem}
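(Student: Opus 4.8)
The plan is to compare $X_n$ and $\wh W_n$ on the grid first, and then extend to the whole interval by controlling the two discretization errors. Write $\delta_n(t)=X_n(t)-\wh W_n(t)$. By the triangle inequality in $\sup$-norm,
\[
\sup_{t\in[0,1]}\abs{\delta_n(t)}\les\max_{1\les i\les k_n}\abs{\delta_n(\tfrac{i}{k_n})}+\sup_{t\in[0,1]}\abs{X_n(t)-X_n^{[k_n]}(t)}+\sup_{t\in[0,1]}\abs{\wh W_n(t)-\wh W_n^{[k_n]}(t)},
\]
so it suffices to bound the second moments of the three terms separately. The last term is $\Oh(\log k_n/k_n)$ by~\eqref{eq:BM_disc} (uniformly in $X_n$, since $\wh W_n$ is a standard Brownian motion). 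The middle term is $\Oh(k_n\mu_{4,n}+\log k_n/k_n)$ by Proposition~\ref{prop:Levy_disc}. Both of these are of the claimed order, so the crux is the grid term $\e\max_{1\les i\les k_n}\abs{\delta_n(i/k_n)}^2$.

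For the grid term I would exploit that $\wh W_n$ is built from increments $\xi_i=h_{k_n}(\Delta^{k_n}_iX_n,U_i)$ that are \emph{comonotonically} coupled with $\Delta^{k_n}_iX_n$, and that the ``interval bridges'' $\beta_i$ are independent of $X_n$. On the grid the bridges play no role, so $\delta_n(i/k_n)=\sum_{j=1}^i(\Delta^{k_n}_jX_n-\xi_j)$ is a sum of the increment-wise coupling errors $D_j\coloneqq\Delta^{k_n}_jX_n-\xi_j$. Crucially the $D_j$ are i.i.d.\ and have mean zero: each $\Delta^{k_n}_jX_n$ has mean $0$ and $\xi_j$ is mean-zero normal, and by the comonotonic optimality (Ex.~3.2.14 in~\cite{mass_transportation}) we have $\e D_j^2=\mcW_2^2(X_n(1/k_n),B(1/k_n))\les C\mu_{4,n}/k_n$ per Lemma~\ref{lem:Rio}, at least up to how $\mu_4$ scales for $X_n(1/k_n)$; more carefully, $\e D_j^2=\tfrac1{k_n}\mcW_2^2(\sqrt{k_n}X_n(1/k_n),B(1))$, and by~\cite[Thm.~4.1]{rio09} this is $\les C k_n\e X_n^4(1/k_n)$, which tends to $\mu_{4,n}/k_n$ in the relevant regime — in any case $\e\sum_{j=1}^{k_n}D_j^2=\Oh(k_n\mu_{4,n})$ after the dust settles. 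Since $\partial_n(\cdot/k_n)$ is a mean-zero random walk, Doob's $L^2$ maximal inequality gives
\[
\e\max_{1\les i\les k_n}\abs{\delta_n(\tfrac{i}{k_n})}^2\les 4\,\e\abs{\delta_n(1)}^2=4\,\e\Bigl(\sum_{j=1}^{k_n}D_j\Bigr)^2=4k_n\e D_1^2=\Oh(k_n\mu_{4,n}),
\]
using independence to kill the cross terms. Combining the three bounds yields the stated $\Oh(k_n\mu_{4,n}+\log k_n/k_n)$.

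The main obstacle is the careful bookkeeping of $\e D_1^2$: one must pass from $\mu_{4,n}$ (the $L^4$ quantity associated with $X_n(1)$) to the $L^4$ quantity associated with the small-increment $X_n(1/k_n)$, and verify that the bound from~\cite[Thm.~4.1]{rio09} reassembles to $\Oh(k_n\mu_{4,n})$ after summing $k_n$ terms — essentially the same computation as in the proof of Lemma~\ref{lem:Rio}, together with~\cite[Thm.~1.1]{MR2479503} to control $k_n\e X_n^4(1/k_n)\to\mu_{4,n}$. A minor additional point is that Doob's inequality applies to $\abs{\delta_n(i/k_n)}^2$ because $(\delta_n(i/k_n))_i$ is a martingale in its own filtration (the $D_j$ are i.i.d.), so no extra structure of $X_n$ is needed; everything is uniform in the choice of $X_n$ subject only to~\eqref{eq:main_assumptions} and $\mu_{4,n}\to0$.
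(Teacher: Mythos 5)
Your proof takes essentially the same route as the paper's: the same triangle-inequality split into the grid maximum plus the two discretization tails, Doob's $L^2$ maximal inequality for the random walk $\delta_n(i/k_n)=\sum_{j\le i}D_j$, comonotonic optimality to identify $\e D_1^2$ with $\mcW_2^2(X_n(1/k_n),B(1/k_n))$, Lemma~\ref{lem:Rio} to bound that, and~\eqref{eq:BM_disc} plus Proposition~\ref{prop:Levy_disc} for the tails. One small clean-up: your first pass ``$\e D_j^2\les C\mu_{4,n}/k_n$'' is off by a factor of $k_n$ --- Lemma~\ref{lem:Rio} gives a bound uniform in $t$, namely $\e D_1^2=\mcW_2^2(X_n(1/k_n),B(1/k_n))\les C\mu_{4,n}$, which is exactly what turns $4k_n\e D_1^2$ into the claimed $\Oh(k_n\mu_{4,n})$; your self-correction lands there, so no further Rio/Asmussen--Rosi\'nski bookkeeping is needed.
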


\begin{proof}
An upper bound on the maximal distance $\sup_{t\in [0,1]} \abs{X_n(t)-\wh W_n(t)}$ is given by
\begin{equation}
\label{eq:triangle}
\max_{1\les i\les k_n} \abs{X_n(i/k_n)-\wh W_n(i/k_n)}+\sup_{t\in [0,1]} \abs{\wh W_n(t)-\wh W_n^{[k_n]}(t)}+\sup_{t\in [0,1]} \abs{X_n(t)-X_n^{[k_n]}(t)}.
\end{equation}
By construction, $(X(i/k_n)-\wh W(i/k_n),\; 1\les i\les k_n)$ is a zero-mean random walk, and so applying Doob's maximal inequality \cite[Prop.~7.16]{kallenberg} we obtain
\begin{equation}
\label{eq:max}
\e\max_{1\les i\les k_n} \abs{X(i/k_n)-\wh W(i/k_n)}^2\les 4\e\abs{X(1)-\wh W(1)}^2= 4k_n\e\abs{X(1/k_n)-\wh W(1/k_n)}^2.
\end{equation}
But the latter expectation is $\mcW_2^2(X(1/k_n),B(1/k_n))$ again by construction. According to Lemma~\ref{lem:Rio} we see that the upper bound in~\eqref{eq:max} is $\Oh(k_n\mu_{4,n})$. The other two terms in~\eqref{eq:triangle} are $\Oh(\log k_n/k_n)$ and $\Oh(k_n\mu_{4,n}+\log k_n/k_n)$ according to~\eqref{eq:BM_disc} and Proposition~\ref{prop:Levy_disc}, respectively.
\end{proof}

The following is our main result.

\begin{corollary}
\label{cor:Levy_to_BM}
Consider a sequence of L\'evy processes $X_n$ satisfying~\eqref{eq:main_assumptions} and $\mu_{4,n}\to 0$. Then for any $k_n\to\infty$ such that $k_n\mu_{4,n}\to 0$ we have
\begin{align*}
\e\sup_{t\in [0,1]} \abs{X_n(t)-W_n(t)}^2
&=\Oh\big(k_n\mu_{4,n}+\log k_n/k_n\big),
\end{align*}
provided $W_n(1)$ is chosen so that $\e(X_n(1)-W_n(1))^2$ is of the same order. 

In particular, taking $k_n\sim\sqrt{\abs{\log\mu_{4,n}}/\mu_{4,n}}$ and coupling $X_n(1)$ and $W_n(1)$ comonotonically yields
\[
\e\sup_{t\in [0,1]} \abs{X_n(t)-W_n(t)}^2
=\Oh\big(\log k_n/k_n\big)
=\Oh\big(\textstyle{\sqrt{\mu_{4,n}\abs{\log\mu_{4,n}}}}\big).
\]
\end{corollary}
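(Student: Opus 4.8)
The plan is to obtain both assertions by combining Theorems~\ref{thm:basic} and~\ref{thm:X-B} with Lemma~\ref{lem:Rio}; no further probabilistic input is needed, only elementary asymptotic bookkeeping.

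For the first assertion, put $\ve_n\coloneqq k_n\mu_{4,n}+\log k_n/k_n$. Then $\ve_n\to 0$ (since $k_n\mu_{4,n}\to 0$ and $k_n\to\infty$) and $\ve_n k_n/\log k_n\ges 1$, so $\ve_n$ meets the requirement imposed on it in Theorem~\ref{thm:basic} (the monotonicity ``$\downarrow$'' is never used in that proof, being only a chain of $\Oh$-bounds). Theorem~\ref{thm:X-B} gives $\e\sup_{t\in[0,1]}\abs{X_n(t)-\wh W_n(t)}^2=\Oh(\ve_n)$, while by hypothesis $\e\abs{X_n(1)-W_n(1)}^2=\Oh(\ve_n)$. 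Theorem~\ref{thm:basic} then yields $\e\sup_{t\in[0,1]}\abs{X_n(t)-W_n(t)}^2=\Oh(\ve_n)$, which is the claim.

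For the second assertion, first note that with the comonotonic coupling of the end-points Lemma~\ref{lem:Rio} at $t=1$ gives $\e\abs{X_n(1)-W_n(1)}^2=\mcW_2^2(X_n(1),B(1))\les C\mu_{4,n}$, and $\mu_{4,n}=\oh\big(\sqrt{\mu_{4,n}\abs{\log\mu_{4,n}}}\big)$ because $\sqrt{\abs{\log\mu_{4,n}}/\mu_{4,n}}\to\infty$; hence the end-point coupling is of the required order. It then remains to evaluate $\ve_n=k_n\mu_{4,n}+\log k_n/k_n$ for the choice $k_n\sim\sqrt{\abs{\log\mu_{4,n}}/\mu_{4,n}}$. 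Abbreviating $\mu=\mu_{4,n}\downarrow 0$, one has $k_n\mu\sim\sqrt{\mu\abs{\log\mu}}$ and $\log k_n=\tfrac12\big(\abs{\log\mu}+\log\abs{\log\mu}\big)\sim\tfrac12\abs{\log\mu}$, so $\log k_n/k_n\sim\tfrac12\sqrt{\mu\abs{\log\mu}}$; in particular $k_n\mu_{4,n}\to 0$ and $k_n\to\infty$, so the first assertion applies and $\ve_n=\Theta\big(\sqrt{\mu_{4,n}\abs{\log\mu_{4,n}}}\big)$, the stated rate.

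The argument is essentially bookkeeping; the only mildly delicate point is checking that the doubly-logarithmic term $\log\abs{\log\mu}$ in $\log k_n$ is negligible next to $\abs{\log\mu}$, so that the two contributions $k_n\mu_{4,n}$ and $\log k_n/k_n$ are of the same order and collapse to $\sqrt{\mu_{4,n}\abs{\log\mu_{4,n}}}$. It is worth remarking that $k_n$ is chosen precisely so as to equate these two contributions, which is why it coincides, up to a constant factor, with the minimizer of $k\mapsto k\mu_{4,n}+\log k/k$.
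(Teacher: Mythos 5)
Your proof is correct and follows essentially the same route as the paper's: combine Theorem~\ref{thm:basic} and Theorem~\ref{thm:X-B} for the first part, and invoke Lemma~\ref{lem:Rio} to control the end-point coupling for the second part, then do the asymptotic bookkeeping for the choice $k_n\sim\sqrt{|\log\mu_{4,n}|/\mu_{4,n}}$. The paper's proof is a two-line pointer to the same ingredients; you have simply made the verification of the hypotheses of Theorem~\ref{thm:basic} explicit (including the observation that $\ve_n k_n/\log k_n\ges 1$ and that monotonicity of $\ve_n$ plays no role), which is a faithful expansion, not a different argument.
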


\begin{proof}
Theorems~\ref{thm:basic} and~\ref{thm:X-B} yield the first part. The second part follows from Lemma~\ref{lem:Rio} implying $\e\abs{X_n(1)-W_n(1)}^2=\Oh(\mu_{4,n})$.
\end{proof}

\begin{remark}\label{rem:mu_bound}
Note that any upper bound $\ov\mu_{4,n}\to 0$ on $\mu_{4,n}$ can be taken instead in Corollary~\ref{cor:Levy_to_BM}. In that case it is sufficient to choose $k_n\sim c(\abs{\log\ov\mu_{4,n}}/\ov\mu_{4,n})^{1/2}$ for any $c>0$ to get the bound $\Oh((\ov\mu_{4,n}\abs{\log\ov\mu_{4,n}})^{1/2})$ on the mean squared maximal distance.
\end{remark}

It is generally not necessary to couple $X_n(1)$ and $W_n(1)$ comonotonically to obtain the rate in Corollary~\ref{cor:Levy_to_BM}.
This is important, since the distribution function of $X(1)$ is rarely explicit, and we may produce a near-comonotonic coupling by sampling independent copies of $X(1)$, see Lemma~\ref{lem:empirical_comonotonic} below.

Finally, we note that a slightly better rate can be produced when restricting comparison of the paths to the grid points:
\[
\e\max_{1\les i\les k_n} \abs{X_n(i/k_n)-W_n(i/k_n)}^2=\Oh\big(k_n\mu_{4,n}+\log\log k_n/k_n\big).
\]
This stems from a slightly better bound in Lemma~\ref{lem:Brownian_motions} as compared to Proposition~\ref{prop:bridges}.

\subsection{On the fourth moment}
Here we provide a general condition implying that $\mu_{4,n}\to 0$.

\begin{lemma}
\label{lem:mu4}
Let $X_n$ be a sequence of L\'evy processes converging to a Brownian motion and having L\'evy measures $\Pi_n$. Then $\mu_{4,n}\to 0$ if and only if
\begin{equation}
\label{eq:UI}
\lim_{M\to\infty} \limsup_{n\to\infty} \int_{\abs{x}>M} x^4\Pi_n(\D x)=0.
\end{equation}
\end{lemma}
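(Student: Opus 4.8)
The plan is to prove both directions using the standard characterization of convergence of infinitely divisible laws in terms of L\'evy triplets, combined with a uniform integrability argument for the measures $x^4\,\Pi_n(\D x)$. Since $X_n$ converges to a Brownian motion (say with variance $\sigma^2$), and all $X_n$ satisfy~\eqref{eq:main_assumptions}, the classical convergence criterion (e.g.~\cite[Thm.~15.14]{kallenberg} or the analogous statement for triangular arrays) tells us that the truncated variances $\int_{\abs{x}\les \delta} x^2\,\Pi_n(\D x)$ together with the Gaussian parts converge to $\sigma^2$, the drifts converge appropriately, and crucially $\Pi_n\to 0$ vaguely on $\mbR\setminus\{0\}$, i.e. $\int f\,\D\Pi_n\to 0$ for every bounded continuous $f$ vanishing in a neighbourhood of the origin. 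In particular $\limsup_n\int_{\abs{x}>M}\Pi_n(\D x)=0$ for each fixed $M>0$, and since $\e X_n^2(1)=1$ we also control $\int x^2\Pi_n(\D x)\les 1$ uniformly.

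For the ``only if'' direction, suppose $\mu_{4,n}=\int_\mbR x^4\,\Pi_n(\D x)\to 0$. Then for any $M>0$ we trivially have $\int_{\abs{x}>M} x^4\,\Pi_n(\D x)\les\mu_{4,n}\to 0$, so~\eqref{eq:UI} holds (the $\limsup$ is $0$ for every $M$, hence the outer limit is $0$). This direction is immediate.

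For the ``if'' direction, assume~\eqref{eq:UI}. Split $\mu_{4,n}=\int_{\abs{x}\les M} x^4\,\Pi_n(\D x)+\int_{\abs{x}>M} x^4\,\Pi_n(\D x)$. The second term is handled directly by~\eqref{eq:UI}: given $\eta>0$, choose $M$ so that $\limsup_n\int_{\abs{x}>M} x^4\,\Pi_n(\D x)<\eta$. For the first term, bound $\int_{\abs{x}\les M} x^4\,\Pi_n(\D x)\les M^2\int_{\abs{x}\les M} x^2\,\Pi_n(\D x)$. Now I split this integral further at a small threshold $\delta$: the part over $\delta<\abs{x}\les M$ is at most $M^2\int_{\abs{x}>\delta}\Pi_n(\D x)\to 0$ as $n\to\infty$ by vague convergence $\Pi_n\to0$ (for fixed $\delta,M$), while the part over $\abs{x}\les\delta$ is at most $M^2\int_{\abs{x}\les\delta} x^2\,\Pi_n(\D x)\les M^2\delta^2\,\ov\Pi_n(0)$... no — better, bound it by $M^2\delta^2 \cdot \delta^{-2}\int_{\abs{x}\le\delta}x^2\Pi_n(\D x)$ is circular; instead use $\int_{\abs{x}\le\delta} x^2\Pi_n(\D x)\le \delta^2 \Pi_n(\{0<\abs{x}\le\delta\})$ is wrong too. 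The clean bound is $\int_{\abs{x}\le\delta} x^4\Pi_n(\D x)\le \delta^2\int_{\abs{x}\le\delta}x^2\Pi_n(\D x)\le \delta^2\cdot 1$ using $\e X_n^2(1)=\sigma_n^2+\int x^2\Pi_n(\D x)=1$, hence $\int x^2\Pi_n\le1$. So $\int_{\abs{x}\le\delta}x^4\Pi_n(\D x)\le\delta^2$ uniformly in $n$. Combining: for fixed small $\delta$ and large $M$, $\limsup_n\mu_{4,n}\le \delta^2 + M^2\limsup_n\int_{\delta<\abs{x}\le M}\Pi_n(\D x) + \eta = \delta^2 + 0 + \eta$, and letting $\delta\to0$, $\eta\to0$ gives $\mu_{4,n}\to0$.

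The main obstacle, and the only place requiring care, is justifying that $\Pi_n\to 0$ vaguely away from the origin — that is, extracting from ``$X_n$ converges to a Brownian motion'' the statement $\int_{\abs{x}>\delta}\Pi_n(\D x)\to0$ for each $\delta>0$. This is exactly the content of the L\'evy--Khintchine convergence theorem (a limiting law with no jumps forces the jump measures to escape to $0$ vaguely), so I would cite the appropriate theorem and keep the rest of the argument as the elementary splitting above.
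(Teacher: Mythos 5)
Your proposal is correct and proves the same statement, but the ``if'' direction takes a genuinely more elementary route than the paper's. Both arguments start from the L\'evy--Khintchine convergence criterion for $X_n(1)\cid W(1)$ to extract the two facts you use: $\ov\Pi_n(\delta)\to 0$ for each fixed $\delta>0$, and a uniform bound $\limsup_n\int_{\abs{x}\les h}x^2\Pi_n(\D x)\les 1$ (you instead invoke the standing assumption $\e X_n^2(1)=1$, which gives $\int x^2\Pi_n\les 1$ directly --- either works). The ``only if'' direction is identical and immediate. For the ``if'' direction, the paper recasts the tail $\Pi_n\1{\abs{x}>h}$ as part of a probability measure $\Pi_n'$ converging weakly to $\delta_0$, and then identifies condition~\eqref{eq:UI} with the uniform-integrability criterion (Kallenberg, Lem.~4.12) that characterizes when $\int x^4\,\D\Pi_n'\to 0$. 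You instead perform a direct three-way truncation of $\mu_{4,n}$ at $\delta$ and $M$: the inner piece is $\les\delta^2$ uniformly, the middle piece vanishes as $n\to\infty$ by vague convergence of $\Pi_n$ to zero away from the origin, and the tail is controlled by~\eqref{eq:UI}; then send $\delta,\eta\to 0$. This is more hands-on and avoids the abstract UI lemma, while the paper's version is shorter to state once that lemma is available. One small slip worth fixing: your bound on the middle piece should read $M^2\int_{\delta<\abs{x}\les M}x^2\Pi_n(\D x)\les M^4\,\ov\Pi_n(\delta)$ rather than $M^2\int_{\abs{x}>\delta}\Pi_n(\D x)$ (the inequality $x^2\les 1$ you implicitly used fails for $M>1$), but since this term tends to zero for any fixed $\delta,M$, the conclusion is unaffected.
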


\begin{proof}
Fix $h>0$.
According to~\cite[Thm.~15.14]{kallenberg}, the convergence $X_n(1)\cid W(1)$ implies
\[
\limsup_{n\to\infty} \int_{\abs{x}\les h} x^2\Pi_n(\D x)\les 1,
\qquad
\ov\Pi_n(h)\coloneqq\Pi_n(\mbR\setminus [-h,h])\to 0.
\]
Define the measures $\Pi'_n(\D x)\coloneqq\Pi_n(\D x)\1{\abs{x}> h}+ (1-\ov\Pi_n(h))\delta_0(\D x)$, which converge weakly to $\delta_0$, the point mass at~0, and are probability measures for all sufficiently large $n$. Thus
\[
\int_{\abs{x}>h} x^4\Pi_n(\D x)=\int_{\mbR} x^4\Pi'_n(\D x)\to 0,
\]
if and only if~\eqref{eq:UI} holds, which is the respective uniform integrability condition, see~\cite[Lem.~4.12]{kallenberg}.
It is left to note that $\limsup_{n\to\infty} \int_{\abs{x}\les h} x^4\Pi_n(\D x)\les h^2$, and to recall that $h>0$ was arbitrary.
\end{proof}

Some standard conditions implying~\eqref{eq:UI} can be also provided. For example, it is sufficient to assume that $\int_{\abs{x}>1} \abs{x}^{4+\delta}\Pi_n(\D x)$ is bounded for some small $\delta>0$. Alternatively, one can assume a bound on the tails: $\ov\Pi_n(x)\les\ov\Pi(x)$ for $x$ sufficiently large, where the L\'evy measure $\Pi$ satisfies $\int_{\abs{x}>1} x^4\Pi(\D x)<\infty$.

\section{Limiting regimes}
\label{sec:regimes}
Here we explore three limiting regimes: the classical scaling regime, perturbation of a Brownian motion by an independent  L\'evy process, and a small-jump Brownian approximation.
We write $W_n$ for the Brownian motion constructed from $X_n$ according to our increment reordering coupling, assuming that the end-points are coupled comonotonically.
In the latter two regimes it is more natural to index the sequence of processes by $\ve\downarrow 0$ instead of~$n\to\infty$.

\subsection{Zooming out}
For a L\'evy process $X$ satisfying condition~\eqref{eq:main_assumptions}, define a sequence of L\'evy processes $X_n(t)=X(nt)/\sqrt{n}$,\; $n\ges 1$. Then the associated L\'evy measures $\Pi_n$ satisfy
\[
\mu_{4,n}=\int_\mbR x^4\Pi_n(\D x)=\dfrac{1}{n}\int_\mbR x^4\Pi(\D x)\to 0,\quad n\to\infty.
\]
Thus, if $X_n(1)$ and $W_n(1)$ are comonotonically coupled and $k_n\sim c\sqrt{n\log n}$, for some $c>0$, then
\[
\e\sup_{t\in [0,1]} \abs{X_n(t)-W_n(t)}^2=\Oh\big(\sqrt{\log n/n}\big),\quad n\to\infty.
\]

The coupling in~\cite{khoshnevisan93} for a compensated Poisson process and a Brownian motion admits a similar rate.
More precisely, there is an a.s.\ upper bound of order $(\log\log n\log^2 n/n)^{1/2}$ on the squared maximal distance.
The same a.s. rate can be deduced for the coupling between a time-changed random walk $S_{[tn]}/\sqrt n$ with a Brownian motion as in~\cite{strassen1965}, given that $\e S_1^4<\infty$.

\subsection{Perturbed Brownian motion}
\label{subsec:perturbation}
Consider a standard Brownian motion $B$ perturbed by an independent L\'evy process $Y$ satisfying condition~\eqref{eq:main_assumptions}:
\[
X_\ve(t)=\sqrt{1-\ve^2}\;B(t)+\ve Y(t),\quad 0\les t\les 1,
\]
with $\ve\downarrow 0$. Letting $\Pi$ stand for the L\'evy measure of $Y$, we obtain
$
\mu_{4,\ve}=\ve^4\int_\mbR x^4\Pi(\D x)\to 0.
$
According to Corollary~\ref{cor:Levy_to_BM}, we choose $k_\ve\sim\sqrt{\abs{\log\ve}}/\ve^2$ to get
\[
\e\sup_{t\in [0,1]} \abs{X_\ve(t)-W_\ve(t)}^2=\Oh\big(\ve^2\sqrt{\abs{\log\ve}}\big),\quad \ve\to 0.
\]

Note that $\e\sup_{t\in [0,1]} \abs{X_\ve(t)-B(t)}^2=\Oh(\ve^2)$, which is smaller than the bound in the display by a logarithmic factor. To obtain this rate, we would need a simulatable $W_\ve$ that is sufficiently close to $B$. If the path of $X_\ve$ is already given, then this can be done by taking a sufficiently large $k_\ve$. Indeed, in this particular case, according to~\cite{recoverBM}, for fixed $\ve$ the Brownian bridge $W_\ve(t)-tW_\ve(1)$ converges in probability in supremum norm to the bridge $B(t)-tB(1)$ as $k_\ve\to\infty$. Letting $W'_\ve(t)=B(t)-t(B(1)-W_\ve(1))$ be the corresponding limiting process we find (after some straightforward manipulations) that
\[
\e\sup_{t\in [0,1]} \abs{X_\ve(t)-W'_\ve(t)}^2\les\Oh(\ve^2)+ 2\e\abs{X_\ve(1)-W_\ve(1)}^2=\Oh(\ve^2).
\]
Hence, the rate $\Oh(\ve^2)$ can be obtained by taking a sufficiently large $k_\ve$, increasing the cost. We stress that increasing $k_\ve$ does not always lead to an improvement. Indeed, if $X_\ve$ has no Brownian part, then infinite $k_\ve$ results in an independent bridge, as in the case of~$k_\ve=1$, see~\cite[Prop.~3]{recoverBM} for details.

\subsection{Small-jump Gaussian approximation}
\label{sec:ARA}
A widely used idea in simulation of L\'evy processes is to approximate the small jump martingale  by an appropriately scaled Brownian motion. For every cutoff level $\ve\in (0,1)$ we let $M_\ve(t)$ be the martingale containing the compensated jumps of $X$ in $[-\ve,\ve]$. We denote its variance at time $1$ by
\[
\sigma^2_\ve\coloneqq\int_{[-\ve,\ve]} x^2\Pi(\D x).
\]
According to~\cite{MR1834755} the process $X_\ve(t)=M_\ve(t)/\sigma_\ve$ weakly converges to $W(t)$ as $\ve\downarrow 0$ under a minor regularity condition, such as $\sigma_\ve\neq 0$ and $\ve/\sigma_\ve\to 0$, which we assume in the following. Below, we investigate the quality of our coupling in this limiting regime. 

Observe that
\[
\mu_{4,\ve}
=\sigma_\ve^{-4}\int_{[-\ve,\ve]} x^4\Pi(\D x)
\les\sigma_\ve^{-4} \ve^{2}
\int_{[-\ve,\ve]} x^2\Pi(\D x)
=\sigma_\ve^{-2}\ve^2.
\]
Corollary~\ref{cor:Levy_to_BM} (see also Remark~\ref{rem:mu_bound}), readily gives a bound on the mean squared maximal error:
\[
\e\sup_{t\in [0,1]} \abs{X_\ve(t)-W_\ve(t)}^2
=\Oh\big(\sigma^{-1}_\ve\ve\sqrt{\abs{\log(\sigma^{-1}_\ve\ve)}}\big),
\]
where $\abs{\log(\sigma^{-1}_\ve\ve)}=\Oh(\abs{\log\ve})$,  achievable by choosing $k_\ve\sim c\sigma_\ve\ve^{-1}\sqrt{\abs{\log\ve}}$ for some $c>0$. In words, the quality of our coupling is directly linked to the condition $\ve/\sigma_\ve\to 0$ and its rate of convergence.

Letting $\beta$ be the Blumenthal--Getoor index of $X$, i.e.
\begin{equation}
\label{eq:BG}
\beta=\inf\left\{p\ges 0:\, \int_{[-1,1]} \abs{x}^p\Pi(\D x)<\infty\right\}\in [0,2],
\end{equation}
we find that $\sigma^2_\ve\les\ve^{2-\beta_+}\int_{[-\ve,\ve]} x^{\beta_+}\Pi(\D x)=\Oh(\ve^{2-\beta_+})$ for any $\beta_+>\beta$. This readily yields an upper bound on the distance of scaled processes (as arising in applications):
\[
\e\sup_{t\in [0,1]} \abs{M_\ve(t)-\sigma_\ve W_\ve(t)}^2
=\Oh(\ve^{2-\beta_+/2}),
\qquad \text{with}\enskip
k_\ve\sim\ve^{-\beta/2}.
\]

More can be said under an additional lower bound assumption on the jump activity of $X$.
For instance, if $\ov\Pi(\ve)=\Pi(\mbR\setminus [-\ve,\ve])$ is regularly varying at~0, then the corresponding index of regular variation must be~$-\beta$, and by standard theory~\cite[Sec.~1.5 and~1.6]{BGT} we find that $\sigma^2_\ve$ is regularly varying with index $2-\beta$. Thus, $\ve/\sigma_\ve$ is regularly varying with index $\beta/2$ and we get an upper bound
\[
\e\sup_{t\in [0,1]} \abs{X_\ve(t)-W_\ve(t)}^2
=\Oh(\ve^{\beta_-/2}),\qquad \beta_-<\beta.
\]
In this case $k_\ve$, as prescribed by Remark~\ref{rem:mu_bound}, is regularly varying with index $-\beta/2$.


\section{Application to the multilevel Monte Carlo method}
\label{sec:multilevel}
Let us now return to the multilevel Monte Carlo method for the computation of $\e g(X)$ with $\e g^2(X)<\infty$ as discussed in the introduction.
Recall that $X_n$ is an approximation of $X$ obtained by replacing the martingale of jumps in $[-\ve_n,\ve_n]$ by a scaled Brownian motion with the same variance
\[
\sigma_n^2\coloneqq\int_{\abs{x}\les\ve_n} x^2\Pi(\D x).
\]
We choose a geometric sequence of truncation levels, say $\ve_n=2^{-n}$, and assume that the L\'evy measure $\Pi$ of $X$ has the Blumenthal--Getoor index $\beta\in [0,2]$, defined in~\eqref{eq:BG}. We will assume that $g$ is Lipschitz in supremum norm with a constant $L>0$.

\subsection{Coupling and level variance}
The crux of the method is to construct a pair $(X_n,X_{n+1})$ of successive approximations of $X$ so that the variance $\var[g(X_{n+1})-g(X_n)]$ is small.
Consider the decomposition
\[
X_{n+1}=M_n'+R_n',
\]
where $M_n'$ is the martingale of jumps in $[-\ve_n,-\ve_{n+1})\cup(\ve_{n+1},\ve_n]$ and $R_n'$ is the remainder, an independent process consisting of a Brownian motion and a drifted compound Poisson process with jumps exceeding $\ve_n$.
We form $X_n$ as an independent sum of $R'_n$ and $\sigma'_n W'_n$, where ${\sigma'_n}^2=\sigma^2_n-\sigma^2_{n+1}$ and $W'_n$ is a certain standard Brownian motion.
Note that the constructed $X_n$ indeed has the desired law.
Since $g$ is Lipschitz in the supremum norm, we have the upper bound
\begin{equation}
\label{eq:var}
\var[g(X_{n+1})-g(X_n)]
\les L^2\cdot\e\sup_{t\in [0,1]}\abs{M'_n(t)-\sigma'_nW'_n(t)}^2.
\end{equation}

\begin{remark}\label{rem:classical}
In the literature (see, e.g.~\cite{MR2759203,dereich_heidenreich_11}), the process $W'_n$ is typically an independent Brownian motion. Note that this still provides a coupling of $X_n$ and $X_{n+1}$ via the same $R'_n$. In this case, by Doob's martingale inequality, the bound in~\eqref{eq:var} is strictly of order $\Oh({\sigma'_n}^2)$, which is further upper bounded by $\Oh(\ve_n^{2-\beta_+})$ for any $\beta_+>\beta$.
\end{remark}

Here we propose to construct $W'_n$ from $X'_n=M'_n/\sigma'_n$ according to the algorithm with increment reordering presented in this paper with $k'_n$ as prescribed in Corollary~\ref{cor:Levy_to_BM}.
We assume that
$\sigma'_n/\ve_n\to\infty$, making the Brownian approximation of $X'_n$ valid in the limiting sense. 
Note that $X_n'$ is a compound Poisson process, which we may easily evaluate on any chosen grid. 
Furthermore, we assume that the comonotonic coupling of the end-points can be implemented with sufficient accuracy, and return to this issue later in \S\ref{subsec:near-comonotonic}. 
Finally, as in \S\ref{sec:ARA}, we find by Corollary~\ref{cor:Levy_to_BM} that 
\[
\e\sup_{t\in [0,1]} \abs{M'_n(t)-\sigma'_nW'_n(t)}^2
=\Oh\big(\sigma'_n\ve_n\sqrt{\abs{\log\ve_n}}\big)
=\Oh(\ve_n^{2-\beta_+/2}),
\qquad \beta_+>\beta,
\]
which implies the same upper bound on the level variance in~\eqref{eq:var}.
In particular, we have improved the variance by a factor $\ve_n/\sigma'_n\to 0$ up to a log term.
Furthermore, our chosen number of increments satisfies $k'_n\sim\sigma'_n\ve_n^{-1} \sqrt{\abs{\log\ve_n}}=\Oh(\ve_n^{-\beta_+/2})$.


\subsection{Computational complexity}
\label{subsec:complexity}
First, we consider the expected cost of sampling a pair of processes $(X_n,X_{n+1})$, where sampling does not include specification of $k'_n$ independent Brownian bridges for each process.
The expected number of jumps of $X_{n+1}$ is $\ov\Pi(\ve_{n+1})=\Oh(\ve_n^{-\beta_+})$, since we have assumed that $\ve_{n+1}=\ve_n/2$. 
This is also the expected cost of drawing all jumps and their times, and calculation of $k'_n$ increments of $M_n'$ as well as drawing the Brownian increments, since $k'_n=\Oh(\ve_n^{-\beta_+/2})$ is smaller. The reordering procedure incurs the cost $\Oh(k'_n\log k'_n)$, which is again of smaller order. Thus the cost associated to the level $n$ is of order $\Oh(\ve_n^{-\beta_+})$.
In other words, simulating a pair of coupled processes has nearly the same cost as simply drawing a sample of the marginal $X_{n+1}(1)$. 
In addition, we assume that $g(X_n)$ can be constructed from the above described skeleton at a comparable or lower cost. This is true for a number of functions $g$, including the functions  $g(X)=\int_0^1 X(t)\D t$ and $g(X)=\sup_{t\in [0,1]} X(t)$. 
In \S\ref{subsec:discr} below we consider evaluation of $g$ via further path discretization.

Secondly, to control the bias, we use~\cite[Cor.~6.2]{MR2759203}, which implies that
\[
\abs{\e g(X)-\e g(X_n)}=\Oh(\ve_n\abs{\log\ve_n}).
\]
Note that any coupling can be used to produce a weak bound here, as it does not need to be implementable. Attempting to control the bias with a square root of the second moment and using a bound from \S\ref{sec:ARA} leads to a slightly worse estimate $\Oh(\ve_n^{1-\beta_+/4})$.

Finally, we recall that $\ve_n=2^{-n}$ and apply~\cite[Thm.~2.1]{giles2015multilevel} (with $\alpha'<1$, $\beta'<2-\beta/2$ and $\gamma'>\beta$ arbitrarily close to their boundary values) to find an MLMC algorithm with mean squared error smaller than $\delta^2$ and computational complexity $\mcC_\delta$ satisfying
\[
\e\mcC_\delta
=\begin{cases}
\Oh(\delta^{-2}), &\beta<4/3,\\
\Oh(\delta^{-\frac{3}{2}\beta_+}), &\beta\ges 4/3.
\end{cases}
\]

\subsection{On discretization of paths}
\label{subsec:discr}
Even though the structure of $X_n$ allows to simulate $g(X_n)$ exactly for a number of functions~$g$, we assume that a discretization is used to approximate $g(X_n)$.
That is, we discretize the Brownian component of $X_n$ on a fine grid including the times $\{0,1/m_n,\ldots,1\}$ and the above sampled points of $X_n$. The error (both bias and level variance) of the resulting approximation $g(Y_n)$ may be controlled using an analogue of~\eqref{eq:BM_disc} below by virtue of the Lipschitz continuity of~$g$:
\[
\e\abs{g(Y_n)-g(X_n)}^2
\les L^2\cdot\e\sup_{t\in [0,1]} \abs{Y_n(t)-X_n(t)}^2
=\Oh(\log m_n/m_n).
\]
The orders of the bias, level variance and cost corresponding to the use of the approximations $Y_n$ to estimate $X$ are thus 
\[\Oh\big(\ve_n\abs{\log\ve_n}+\sqrt{\log m_n/m_n}\big),\qquad \Oh\big(\ve_n^{2-\beta_+/2}+\log m_n/m_n\big),\qquad\Oh\big(\ve_n^{-\beta_+}+m_n\big),\] respectively.
Taking $m_n$ much greater than $\ve_n^{-2}$ increases the cost without reducing the order of the bias or level variance. Similarly, taking $m_n$ much smaller than $\ve_n^{-\beta}$ is also suboptimal, since the bias grows without reducing the order of the level variance and cost.

Letting $m_n\sim\ve_n^{-p}$ for some $p\in [\beta,2]$ we may again apply~\cite[Thm.~2.1]{giles2015multilevel} (but with $\alpha'<p/2$, $\beta'<(2-\beta/2)\wedge p$ and $\gamma'>\beta\vee p$ arbitrarily close to their boundary values).
This leads to the minimization of $(\beta\vee p-(2-\beta/2)\wedge p)/p$ achieved by $p=\beta$, and the expected complexity bound
\[
\e\mcC_\delta
=\Oh(\delta^{-(5-4/\beta)\vee 2-\ve}),\quad \ve>0,
\]
for the required precision $\delta\downarrow 0$.
A more detailed analysis shows that the upper bound for $\beta<4/3$ can be improved to $\Oh(\delta^{-2}\abs{\log\delta}^3)$.
Since $m_n$ is of the same order as the expected number of jumps, it is, in fact, not strictly required to further discretize the path.
The randomness in jump times and their number will affect the small order term and not the power.

Finally, we stress that the standard way of taking an independent Brownian motion $W_n'$, as discussed in Remark~\ref{rem:classical}, results in the upper bounds $\e\mcC_\delta=\Oh(\delta^{-(6-4/\beta)\vee 2-\ve})$ and $\e\mcC_\delta=\Oh(\delta^{-(2\beta)\vee 2-\ve})$ for any $\ve>0$ when using $g(Y_n)$ and $g(X_n)$, respectively. Indeed, this follows from an analogous analysis and another application of~\cite[Thm.~2.1]{giles2015multilevel} (see also~\cite[Cor.~1.2]{MR2759203}). Thus our coupling leads to an improved computational complexity in the case when $\beta>1$, and this improvement is substantial for $\beta$ away from~1.
Graphical comparison of the respective powers is presented in Figure~\ref{fig:comparison}, where we also include the case when $g(X_n)$ can be sampled exactly.
For further comparison, it is noted that an ordinary MC results in $\e\mcC_\delta=\Oh(\delta^{-4-\ve})$ when paths are discretized and $\e\mcC_\delta=\Oh(\delta^{-2-\beta-\ve})$ when exact simulation of $g(X_n)$ is possible.

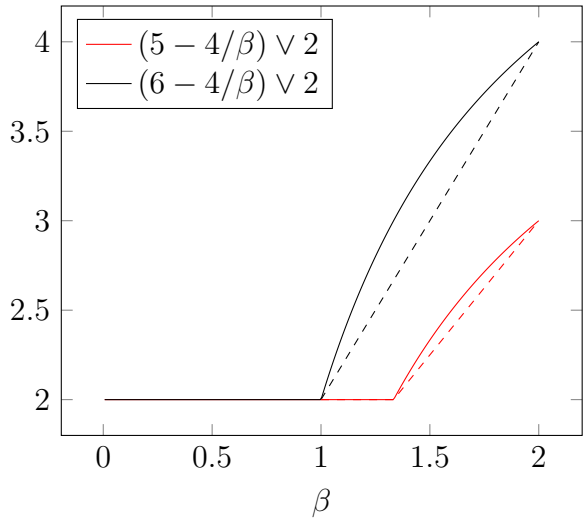
\begin{figure}[ht!]
\begin{tikzpicture}
\begin{axis}[xlabel = $\beta$,legend pos=north west,]
\addplot[domain=0:2, samples=300, color=red]{max(2,5-4/x)};
\addlegendentry{$(5-4/\beta)\vee 2$};
\addplot[domain=0:2, samples=300, color=black]{max(2,6-4/x)};
\addlegendentry{$(6-4/\beta)\vee 2$};
\addplot[domain=1:2, samples=300, color=black,style=dashed]{max(2,2*x)};
\addplot[domain=1:2, samples=300, color=red,style=dashed]{max(2,3*x/2)};
\end{axis}
\end{tikzpicture}
\caption{Asymptotic upper bounds on $\log \e\mcC_\delta/|\log \delta|$ in MLMC as precision $\delta\downarrow 0$. The coupling proposed in this paper is in red; solid (resp. dashed) lines correspond to the use of $g(Y_n)$ (resp. $g(X_n)$).}
\label{fig:comparison}
\end{figure}

\subsection{Near-comonotonic coupling}
\label{subsec:near-comonotonic}
Ideally we want to sample $X'_n(1)$ and $W'_n(1)$ comonotonically since this coupling minimizes the $L^2$-distance. Sampling from a coupling that does not increase the order of the $L^2$-distance is a rudimentary  fundamental problem. 
One way is described in Lemma~\ref{lem:empirical_comonotonic} and it consists of drawing a large number of independent copies of both random variables for each required sample.
This, however, comes at the expense of significantly increasing the computational cost. 
Simple analysis based on Corollary~\ref{cor:Levy_to_BM} and Lemma~\ref{lem:empirical_comonotonic} shows that the order of $\ve_n^{-\beta}$ of final values needs to be generated for each sample, unless one is willing to reuse these values while controlling the induced dependence.

Another way is to numerically evaluate the distribution function of $X'_n(1)$ including identification of the associated atoms if such exist.
In this regard, we observe that the Brownian component of $X_{n+1}$ could have been pushed into $M'_n$ instead of $R'_n$ when constructing the coupled pairs $(X_n,X_{n+1})$.
This change makes our problem more similar to the one studied in~\S\ref{subsec:perturbation} and affects neither the convergence rates nor the bounds on the MLMC computational complexity. 
It results in a smoothing effect, since now $X'_n$ has a Brownian component and the distribution of $X'_n(1)$ is continuous.
Fast Fourier inversion allows for computationally cheap evaluation of the distribution function with good numerical performance~\cite{MR2507761,MR1620156}.
It is, nevertheless, hard to incorporate the resulting numerical errors in our MLMC complexity analysis.


\section{Numerics}
\label{sec:numerical_simulations}
\subsection{Coupling implementation}
For the numerical experiments below we consider a L\'evy measure
\[
\Pi^0(\D x)=\left(0.4\abs{x}^{-\alpha-1}\1{x\in(-\ve_1,-\ve_2)}+ 0.6x^{-\alpha-1}\1{x\in(\ve_2,\ve_1)}\right)\D x,\qquad \alpha=1.5,
\]
with some truncation levels $0<\ve_2<\ve_1$.
Note that it corresponds to a truncation of the L\'evy measure of an $1.5$-stable process with skewness $0.2$.
The process $X$ is the respective drifted compound Poisson process with zero mean, rescaled so that $\e X^2(1)=1$.
The comonotonic coupling of the end-points $W(1)$ and $X(1)$ is implemented using a close approximation of the distribution function of the latter obtained from $30000$ independent realizations; it is fixed within every numerical experiment. A few smoothing options are possible here but they have no visible influence on the results below.
The root-mean-squared-maximal distance, the square root of~\eqref{eq:objective}, will be computed over the grid of mesh $2^{-12}\approx 0.00025$ using $1000$ independent samples of pairs of paths.

\begin{figure}[ht]
\centering
\includegraphics[width=0.45\textwidth]{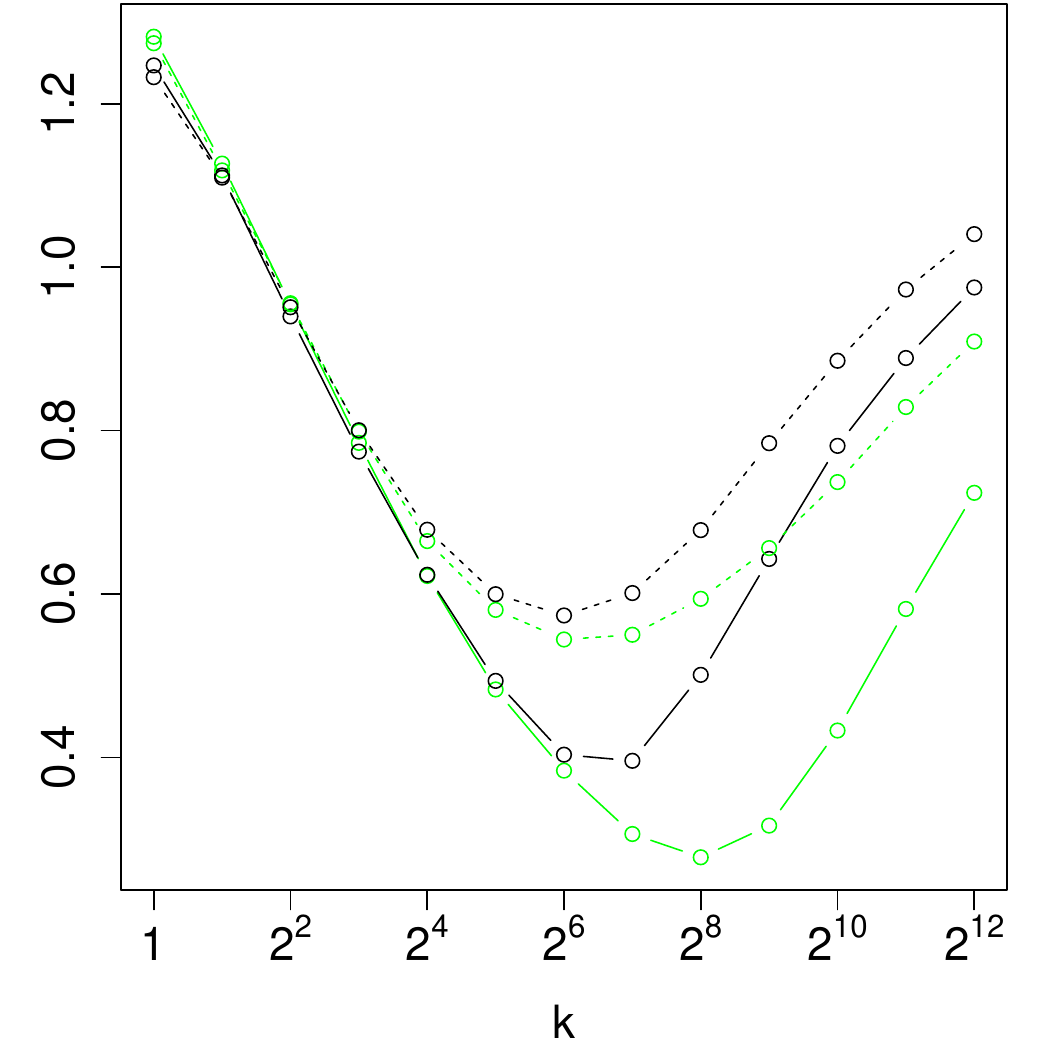}\qquad
\includegraphics[width=0.45\textwidth]{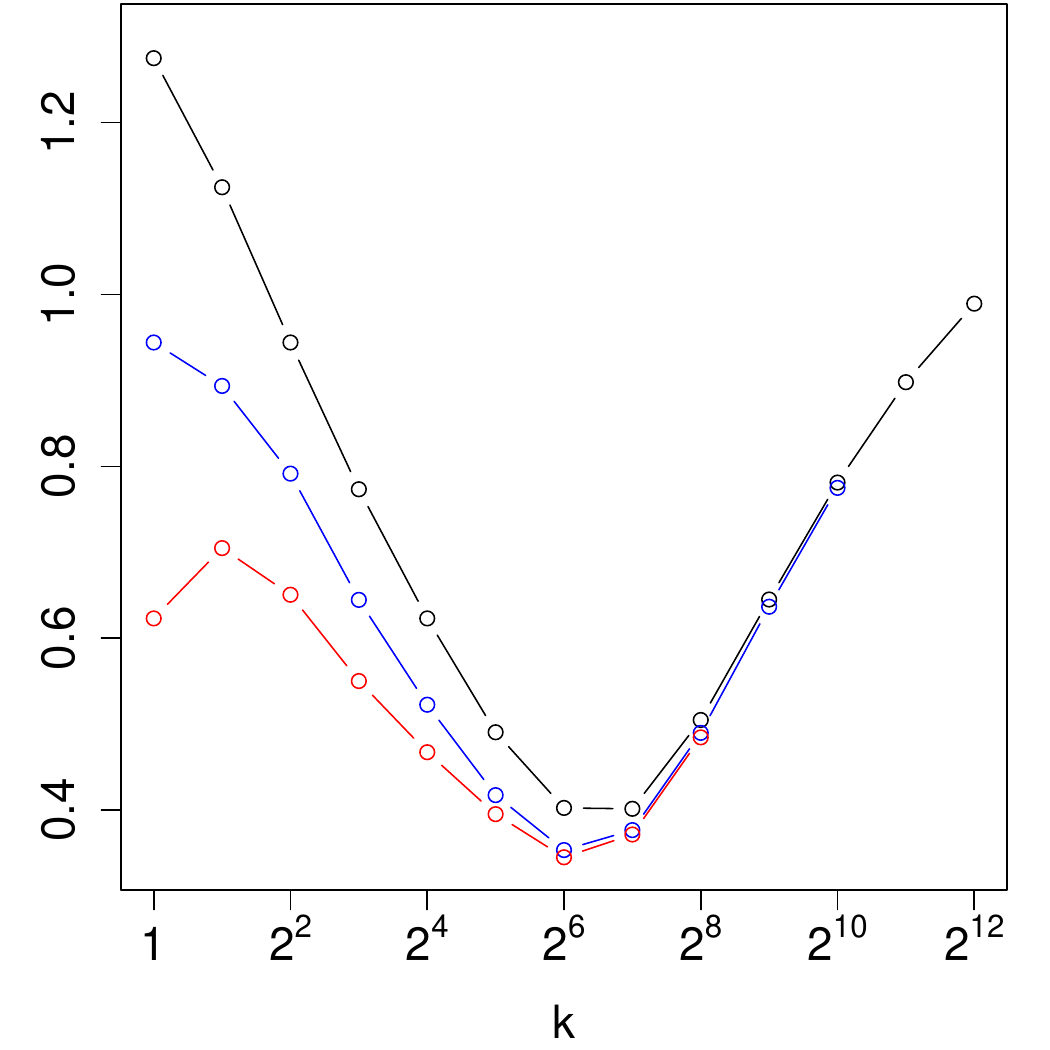}
\caption{Root-mean-squared-maximal distances between $X$ and $W$ as a function of~$k$.
Left: processes $X$ corresponding to $\ve_1\in\{0.1,1\}$ (solid, dashed) and $\ve_2\in\{0.01,0.03\}$ (green and black).
Right: second-level reordering with $\ve_1=0.1$, $\ve_2=0.03$ and $k_2\in\{1,4,16\}$ (black, blue, red).}
\label{fig:exp12}
\end{figure}

In our first experiment we investigate the quality of our coupling for various values of the parameter~$k$, which is the number of incremental processes to be permuted, see~\S\ref{sec:construction}.
We take $\ve_1\in\{0.1,1\}$ (solid, dashed) and $\ve_2\in\{0.01,0.03\}$ (green and black), resulting in four different processes~$X$.
We apply our coupling for $k=2^i$, $i=0,\ldots,12$, and plot in Figure~\ref{fig:exp12} (left) the estimated root-mean-squared-maximal distance.
Observe that the optimal $k$ (among powers of 2) is larger for processes better approximating the Brownian motion, that is, when $\ve_1,\ve_2$ are smaller.
As explained in the Introduction, the case of a very large $k$ need not be good, and should eventually result in the same error as~$k=1$.

In our second experiment we take $\ve_1=0.1,\ve_2=0.03$ (solid black) and apply the reordering idea on two levels, see Remark~\ref{rem:hierarchical}.
Firstly, we use $k=2^i$ as above, and then reorder $k_2\in\{1,4,16\}$ (black, blue, red) increments in each of the $k$ pieces. Note that $k_2=1$ corresponds to the first experiment.
The corresponding root-mean-squared-maximal distances are presented in Figure~\ref{fig:exp12} (right). In this case the second-level reordering is beneficial, and the optimal $k$ is similar in the three considered scenarios.
The smallest value is $0.40$ for the standard coupling with $k=2^7$ ($k=2^6$ gives almost the same result) and $0.34$ for the two-level procedure with $k=2^6,k_2=16$, which is about $15\%$ less.
In applications, one may first use our standard coupling and find a good $k$ and then try second-level reordering for a few~$k_2$.

Next, we illustrate the coupling for the process $X$ (still corresponding to $\ve_1=0.1,\ve_2=0.03$) using the above found optimal $k=2^7$.
Figure~\ref{fig:path} (top left) presents a histogram of maximal absolute distances resulted from $1000$ independent trials, which indeed has root-mean-squared value of $0.40$.
Furthermore, we plot the pairs of paths corresponding to the $0.05,0.50,0.95$ quantiles (top-right to bottom-right).
Importantly, the root-mean-squared error at the end-point, $\sqrt{\e (W(1)-X(1))^2}$, is estimated to be $0.008$, which is negligible when compared to the error for the sample paths.

\begin{figure}[ht!]
\centering
\includegraphics[width=0.45\textwidth]{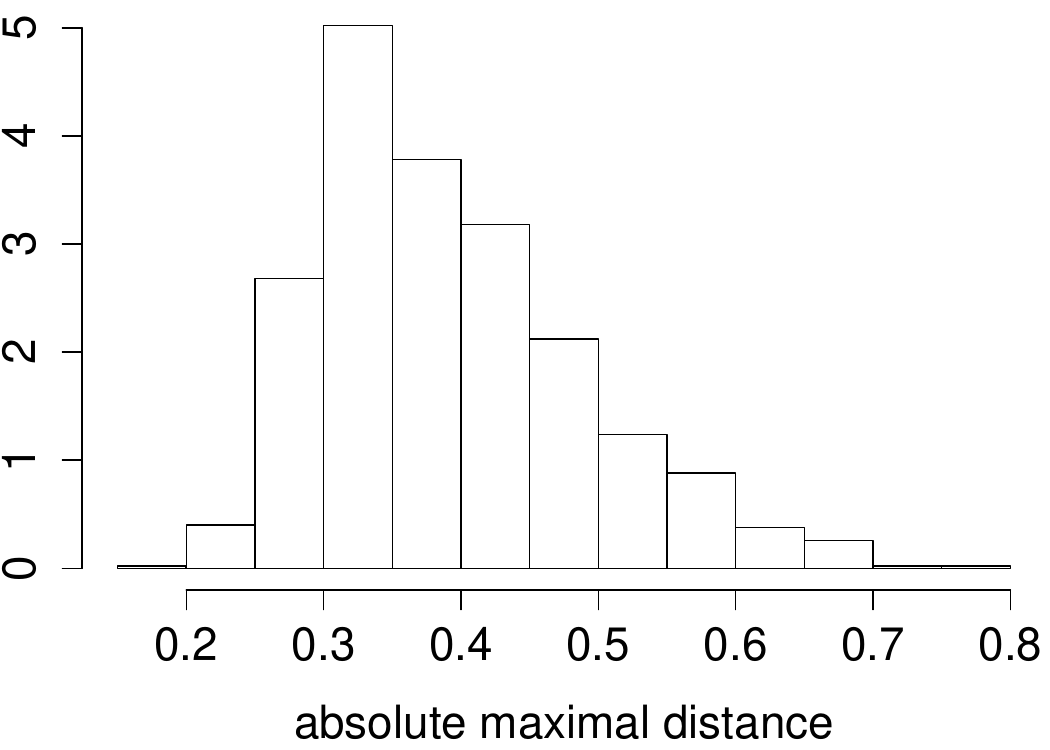}\quad
\includegraphics[width=0.45\textwidth]{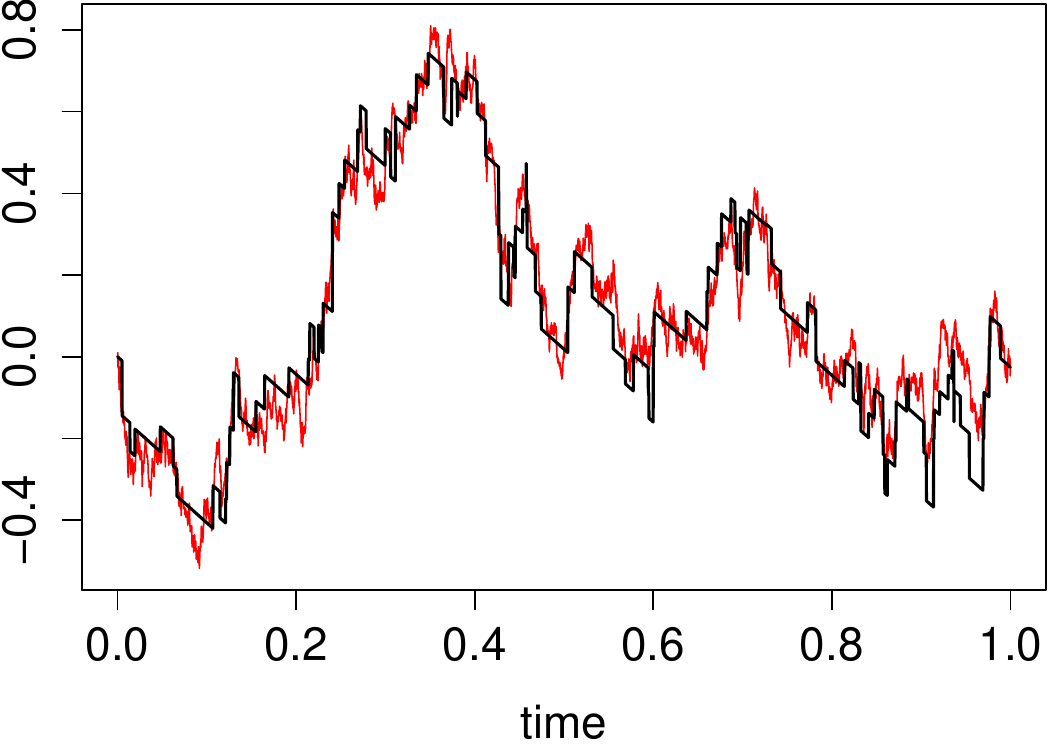}\\
\medskip
\includegraphics[width=0.45\textwidth]{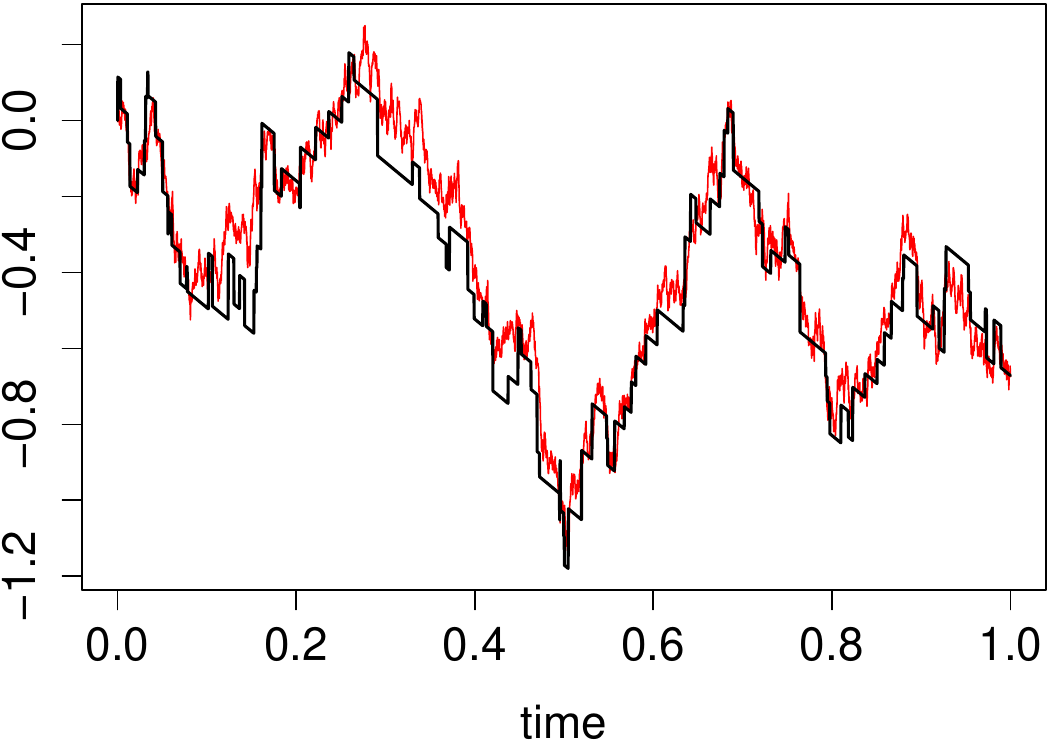}\quad
\includegraphics[width=0.45\textwidth]{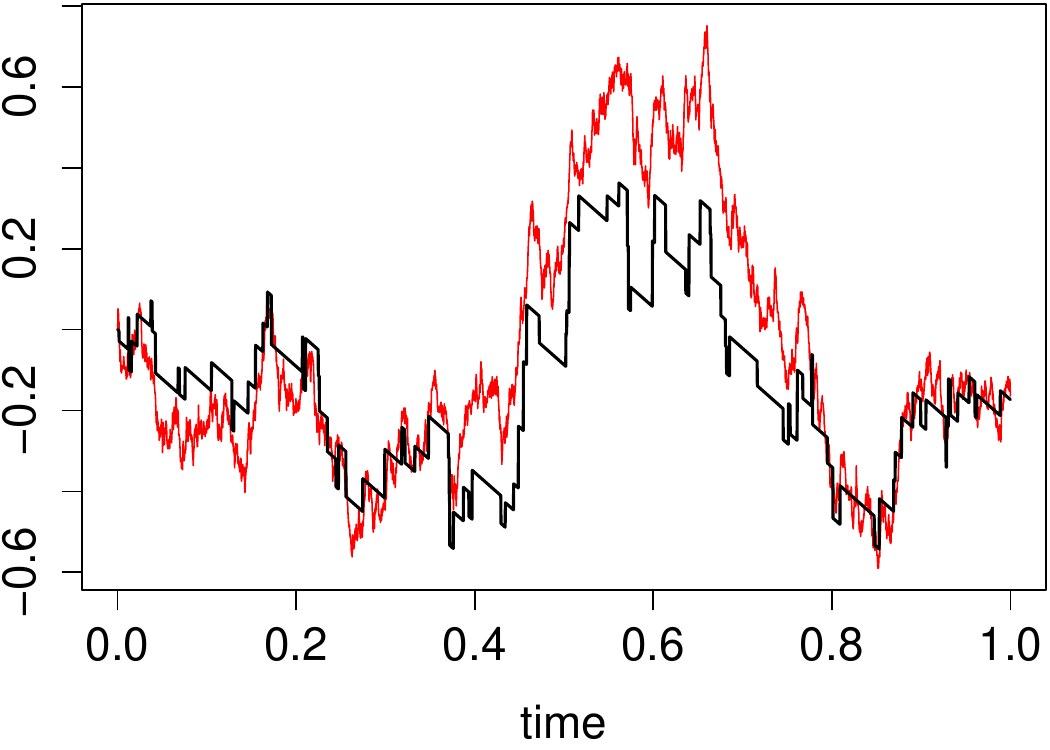}
\caption{Illustration of coupling using $k=2^7$: histogram of maximal absolute distances and three pairs of paths corresponding to $0.05,0.50,0.95$ quantiles.}
\label{fig:path}
\end{figure}

Finally, we provide an illustration of the limit result in Corollary~\ref{cor:Levy_to_BM} in the setting similar to \S\ref{sec:multilevel}.
Take a geometric sequence of truncation levels $\ve_{1,n}=2^{-n}$ and $\ve_{2,n}=2^{-n-1}$, and let $X_n$ correspond to a rescaled martinagle of jumps in $(-2^{-n},-2^{-n-1})\cup(2^{-n-1},2^{-n})$ as above.
We estimate the root-mean-squared-maximal distance for our coupling using different choices of $k_n$ (powers of $2$) and denote the smallest such distance by $d^*_n$ and the respective $k_n$ by $k^*_n$.
These are plotted in Figure~\ref{fig:lim} and also compared to their theoretical counterparts in Corollary~\ref{cor:Levy_to_BM}.
We find a rather good prediction of both the error and the adequate number of increments, which suggests that our theoretical upper bound is rather tight.

\begin{figure}[ht]
\centering
\includegraphics[width=0.45\textwidth]{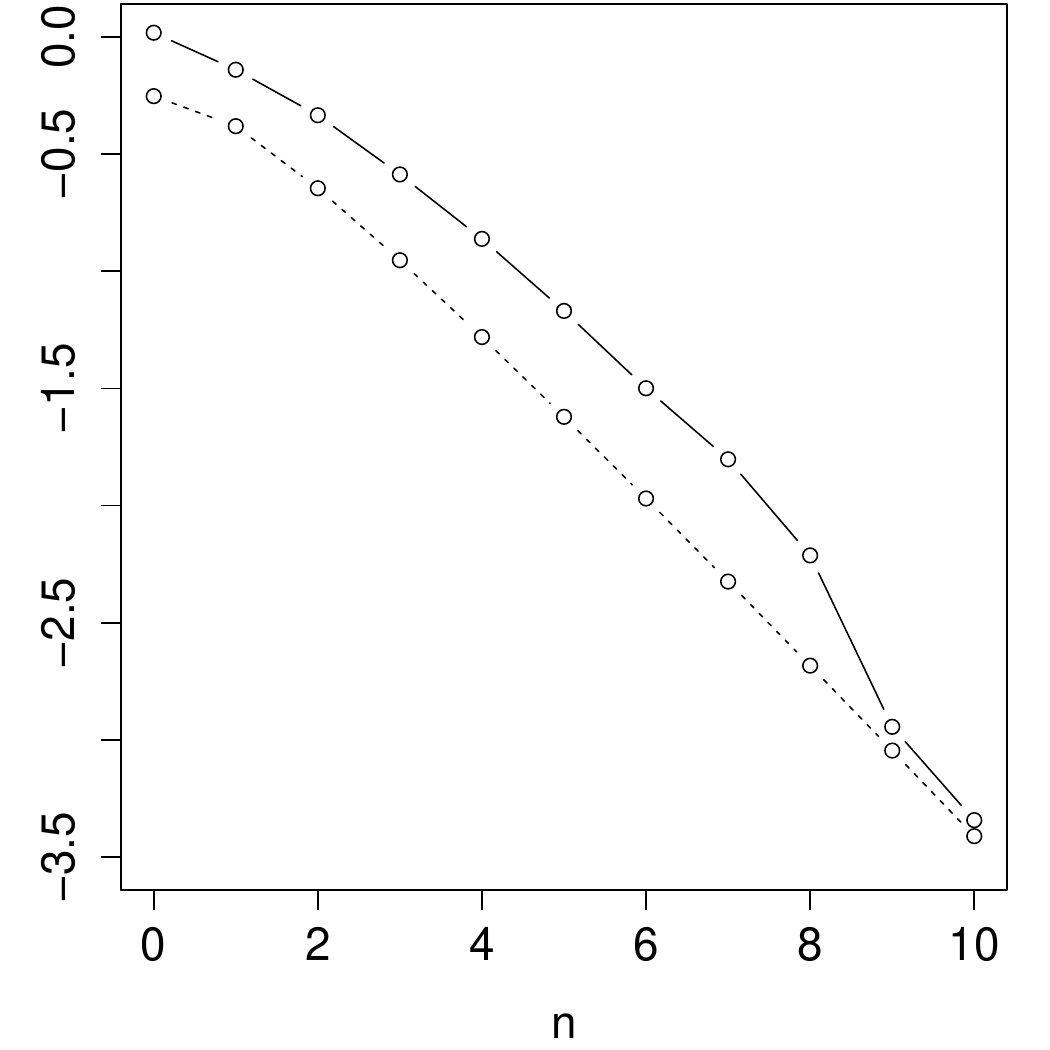}\qquad
\includegraphics[width=0.45\textwidth]{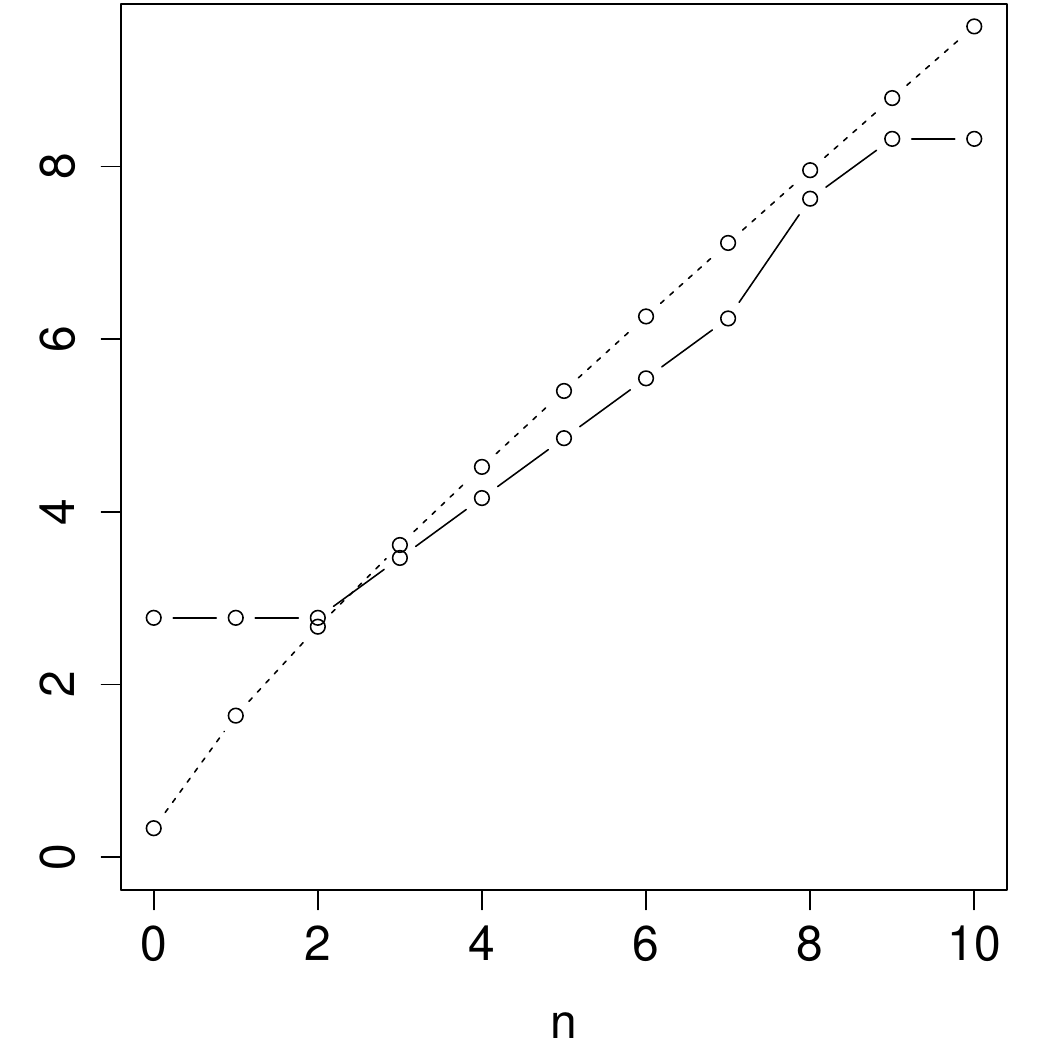}
\caption{The optimal root-mean-squared-maximal distance $d^*_n$, the respective number $k^*_n$, and their theoretical counterparts for $\ve_{1,n}=2^{-n}$ and $\ve_{2,n}=2^{-n-1}$.
Left: $\log d^*_n$ (solid) and $\log(\mu_{4,n}|\log \mu_{4,n}|)/4$ (dashed). Right: $\log k^*_n$ (solid) and $\log(|\log \mu_{4,n}|/\mu_{4,n})/2$ (dashed).}
\label{fig:lim}
\end{figure}

In conclusion, we would like to stress that implementation of the coupling presented in this paper is rather straightforward, but, nevertheless, some issues may arise.
The increments of a compound Poisson process will normally exhibit ties which must be resolved randomly.
Furthermore, for a drifted process such ties may not be detected (numerical rounding) leading to strange non-Brownian trajectories.
A simple solution of this numerical problem is to add a negligible Brownian component to the process.

\subsection{Multilevel Monte Carlo}
To test the performance of our coupling algorithm within the context of the multilevel Monte Carlo analysis of \S\ref{sec:multilevel}, we will consider a tempered stable L\'evy process with zero mean and L\'evy measure
\[
\Pi(\D x) = 0.05|x|^{-\alpha-1}e^{-|x|}\D x,\qquad \alpha\in\{1.2,1.5\},
\]
and the truncation levels $\ve_n=2^{-n/4}$. To couple the endpoints $X'_n(1)$ and $W_n'(1)$, we will take the samples we produced for the estimation and match the pairs by rank order, as discussed in \S\ref{subsec:near-comonotonic}. To test the empirical performance of the multilevel Monte Carlo estimation, it suffices to verify that the bias and level variance converge at the predicted rate or faster. Testing the accuracy of our estimation would require access to the value of $\e g(X)$, which is rarely available analytically. 
To obtain a numerical approximation, we consider the function $g(X)=\sup_{t\in[0,1]}X(t)$ and use the methodology proposed in~\cite{TSBA}, which is more limited but very efficient for this example. 

Following the analysis of \S\ref{sec:multilevel}, we chose $k'_n\sim\ve_n^{-\alpha/2}\sqrt{|\log\ve_n|}$ and $m_n\sim\ve_n^{-p}$. In multilevel Monte Carlo, the number of samples per level depends on the desired accuracy of the estimator, the level variance and simulation cost, they decrease exponentially in $n$ and are typically given adaptively as the simulations are run. For simplicity, we drew $10^{6-\lfloor n/15\rfloor}$ samples of level $n$ for each $n$. Figure~\ref{fig:mlmc} shows the convergence rate of the bias and level variance of the Monte Carlo estimate in \S\ref{sec:multilevel} as a function of the truncation levels $\varepsilon_n$ with the top (resp. bottom) figures corresponding to the case $\alpha=1.2$ (resp. $\alpha=1.5$). The dashed lines indicate the predicted rate of convergence in both cases (see details in \S\ref{sec:multilevel}). The bias and level variance both satisfy the stated bounds, with a particularly good agreement between the the level variance and its bound. 

\begin{figure}[ht]
\centering
\includegraphics[width=0.45\textwidth]{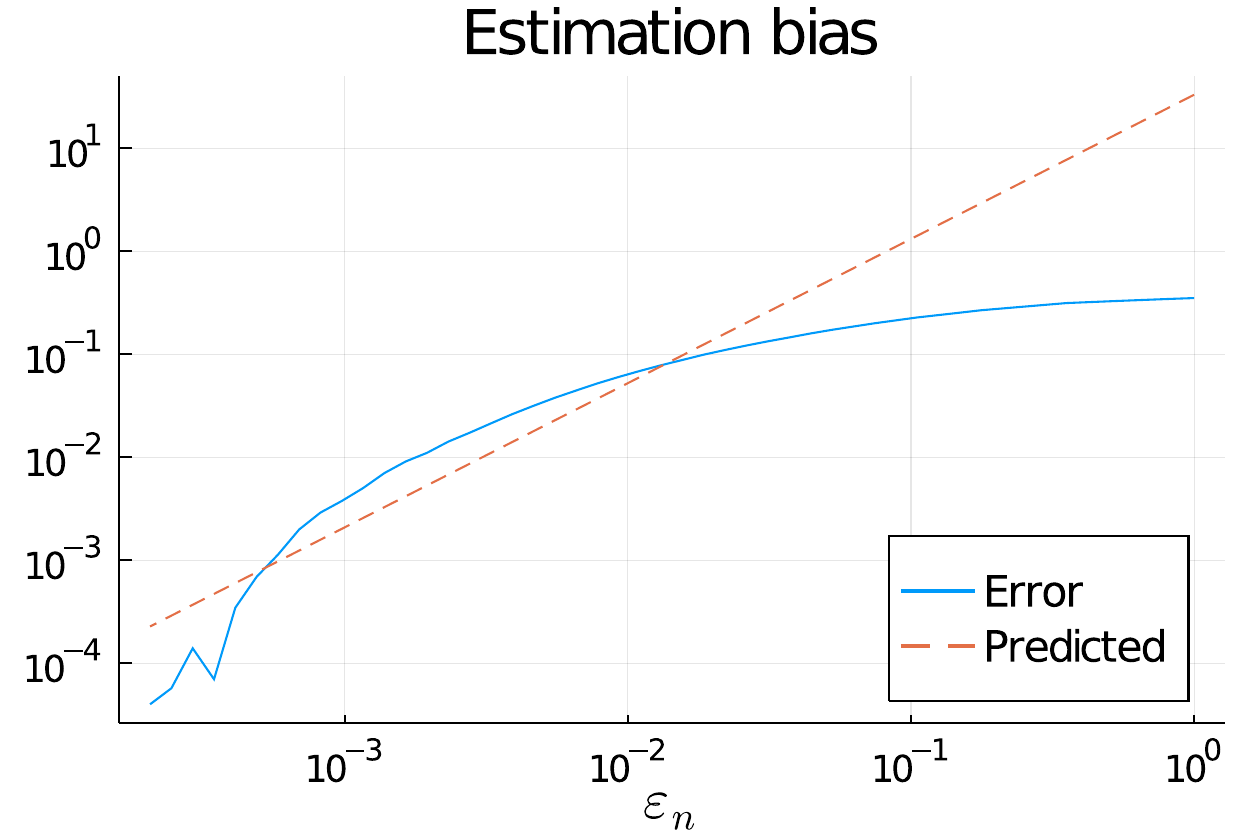}\qquad
\includegraphics[width=0.45\textwidth]{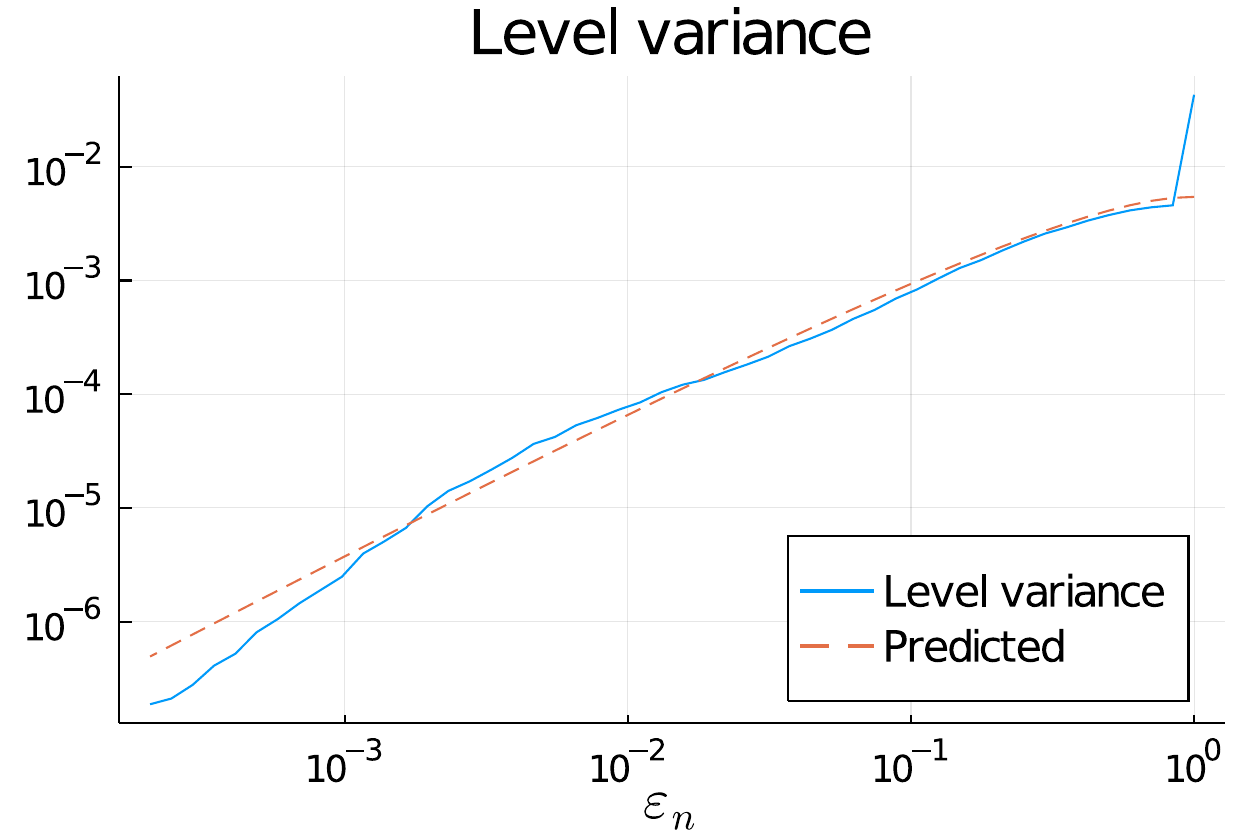}
\includegraphics[width=0.45\textwidth]{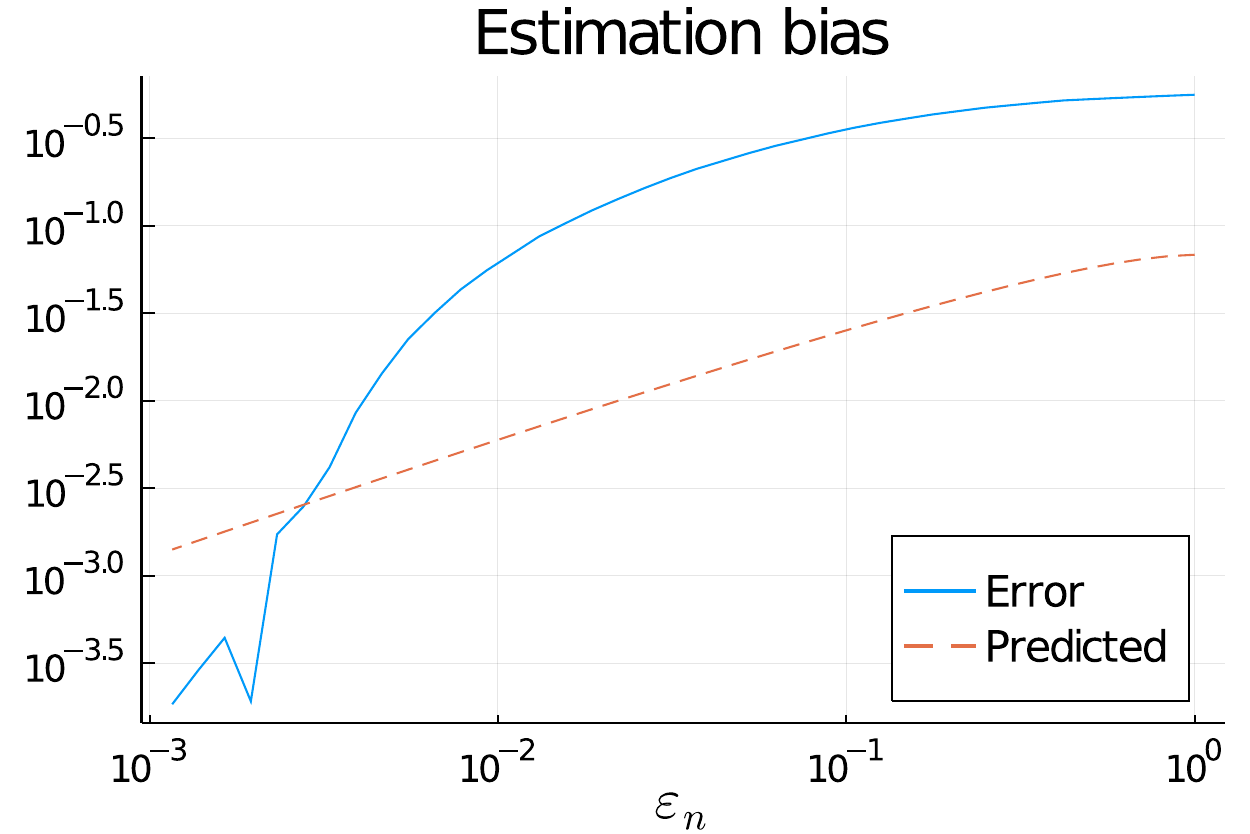}\qquad
\includegraphics[width=0.45\textwidth]{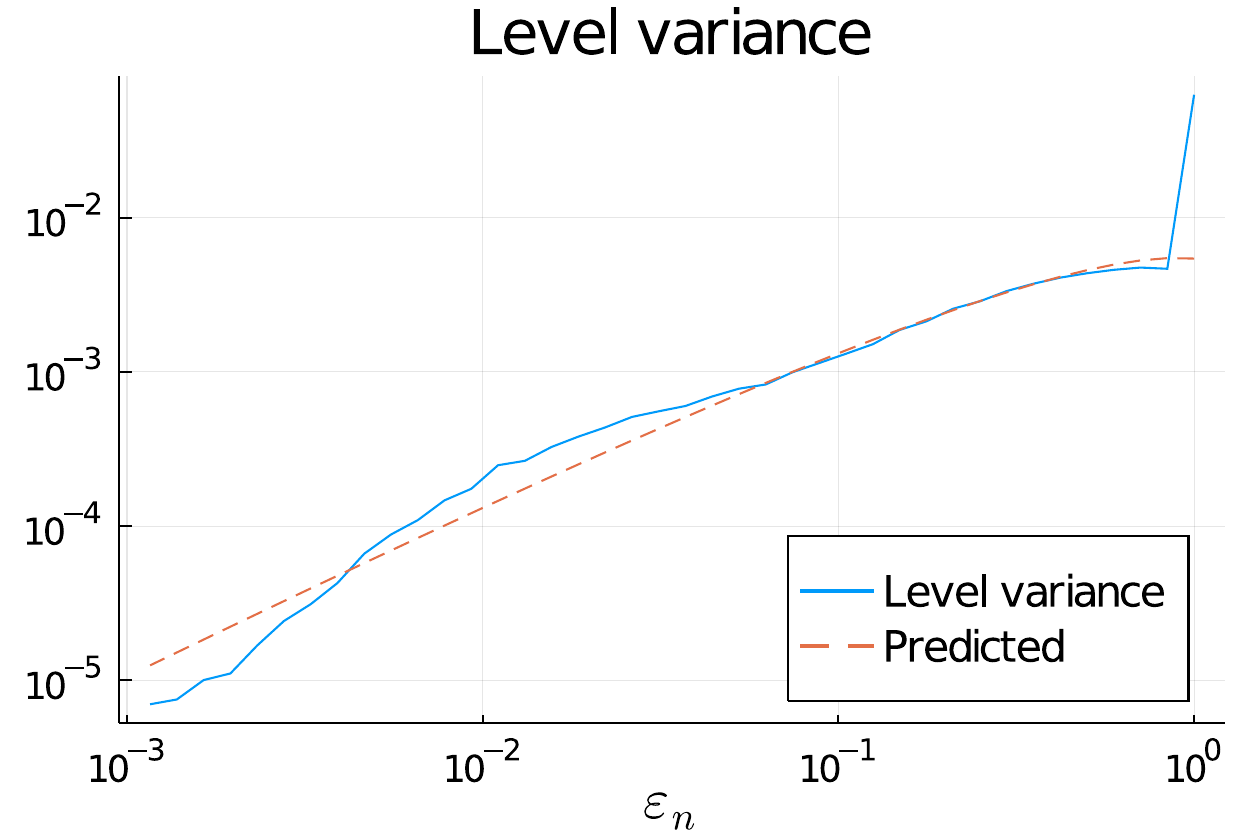}
\caption{The figures show the bias and level variance of the multilevel Monte Carlo method discussed in \S\ref{sec:multilevel}. Top: $\alpha=1.2$, bottom: $\alpha=1.5$, left: decay of the bias $\e (g(X)- g(Y_n))$ (solid) and its bound $\ve_n^{\alpha/2}\sqrt{|\log\ve_n|}$ (dashed), right: decay of the level variance $\var(g(Y_{n+1})-g(Y_n))$ (solid) and its bound $\max\{\ve_n^{2-\alpha/2},\ve_n^{\alpha}|\log\ve_n|\}$ (dashed).}
\label{fig:mlmc}
\end{figure}

\appendix
\section{Bounds for discretized processes}
\label{app:bound}
Proposition~\ref{prop:Levy_disc} provides an upper bound for a similar discretization error for a L\'evy process. Its proof is based on the following three auxiliary results.

\begin{lemma}
\label{lem:tail_estimate}
Let $\ve>0$ and $X$ be a L\'evy process without a Brownian component with $\e X(1)=0$, $\e X^2(1)\les 1$ and the corresponding L\'evy measure $\Pi$ supported on $[-\ve,\ve]$. Then for any $t,x,u>0$ we have
\[
\p\Big(\sup_{s\in [0,t]} X(s)\ges x\Big)\les\exp\big(-ux+u^2e^{u\ve}t\big).
\]
\end{lemma}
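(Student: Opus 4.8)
The plan is to run a Chernoff-type argument based on the exponential martingale of $X$ combined with Doob's maximal inequality. First I would exploit that $\Pi$ is supported on $[-\ve,\ve]$, so all jumps of $X$ are uniformly bounded and hence $X(t)$ has exponential moments of every order. Writing $\psi(u):=\log\e e^{uX(1)}$ for the Laplace exponent, the fact that $X$ has no Gaussian component and $\e X(1)=0$ collapses the L\'evy--Khintchine representation to
\[
\psi(u)=\int_{[-\ve,\ve]}\bigl(e^{ux}-1-ux\bigr)\,\Pi(\D x),
\]
the linear term vanishing precisely because of the mean-zero hypothesis (differentiate at $u=0$); the integral converges since the integrand is $\Oh(x^2)$ near the origin and $\int x^2\,\Pi(\D x)=\e X^2(1)<\infty$. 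I would record this together with the identity $\e e^{uX(t)}=e^{t\psi(u)}$.

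Next I would estimate $\psi(u)$ from above. Using the elementary bound $e^y-1-y\les y^2e^{\abs{y}}$, valid for all $y\in\mbR$, together with $\abs{ux}\les u\ve$ on the support of $\Pi$, one gets
\[
\psi(u)\les u^2e^{u\ve}\int_{[-\ve,\ve]}x^2\,\Pi(\D x)=u^2e^{u\ve}\var X(1)\les u^2e^{u\ve},
\]
where the middle equality uses that $\var X(1)=\int x^2\,\Pi(\D x)$ for a mean-zero L\'evy process without Gaussian part, and the last inequality uses $\e X^2(1)\les 1$. Consequently $\e e^{uX(t)}\les\exp(u^2e^{u\ve}t)$.

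Finally, since $X$ is a martingale and $y\mapsto e^{uy}$ is convex, the process $\bigl(e^{uX(s)}\bigr)_{s\in[0,t]}$ is a nonnegative c\`adl\`ag submartingale, so Doob's maximal inequality yields
\[
\p\Bigl(\sup_{s\in[0,t]}X(s)\ges x\Bigr)=\p\Bigl(\sup_{s\in[0,t]}e^{uX(s)}\ges e^{ux}\Bigr)\les e^{-ux}\,\e e^{uX(t)}\les\exp\bigl(-ux+u^2e^{u\ve}t\bigr),
\]
which is exactly the asserted inequality.

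The argument is essentially routine, so there is no substantial obstacle; the only points deserving a line of care are the finiteness of $\psi(u)$ for all $u$ (guaranteed by the boundedness of the jumps and $\int x^2\,\Pi(\D x)<\infty$), the identification of the drift term as $0$ via the mean-zero assumption, and the fact that Doob's maximal inequality is being applied in continuous time to the c\`adl\`ag submartingale $e^{uX(\cdot)}$.
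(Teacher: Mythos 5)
Your proof is correct and follows essentially the same route as the paper: bound the Laplace exponent $\psi(u)=\int(e^{ux}-1-ux)\,\Pi(\D x)$ by $u^2 e^{u\ve}$ using the elementary inequality $e^y-1-y\les y^2 e^{\abs y}$ together with $\int x^2\,\Pi(\D x)\les 1$, then apply Doob's maximal inequality to the submartingale $e^{uX(\cdot)}$. The only cosmetic difference is how the submartingale property is justified: you invoke Jensen (convexity of $y\mapsto e^{uy}$ applied to the martingale $X$), whereas the paper observes that $e^{uX(t)-t\psi(u)}$ is a martingale and $\psi(u)\ges 0$; both are fine.
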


\begin{proof}
The inequality $e^z-1-z\les z^2e^{\abs{z}}$ yields
\[
\Psi(u)\coloneqq\int_{[-\ve,\ve]} (e^{uz}-1-uz)\Pi(\D z)
\les u^2e^{u\ve}\int_{[-\ve,\ve]} z^2\Pi(\D z)
\les u^2e^{u\ve}.
\]
Since $\exp(uX(t)-t\Psi(u))$ is a martingale and $\Psi(u)$ is non-negative, $\exp(u X(t))$ is a submartingale and Doob's maximal inequality~\cite[Prop.~7.15]{kallenberg} yields
\[
\p\bigg(\sup_{s\in [0,t]} X(s)\ges x\bigg)
=\p\big(e^{u\sup_{s\in [0,t]} X(s)}\ges e^{ux}\big)
\les e^{-ux}\e e^{uX(t)}=e^{-ux+t\Psi(u)}
\]
implying the stated bound.
\end{proof}

\begin{lemma}
\label{lem:mart_disc}
There is a constant $C>0$ such that
\[
\e\sup_{t\in [0,1]} \big(X(t)-X^{[k]}(t)\big)^2\les C\log k/k
\]
for all $k\ges 2$ and every L\'evy process $X$ satisfying the conditions of Lemma~\ref{lem:tail_estimate} with $\ve\les (k\log k)^{-1/2}$.
\end{lemma}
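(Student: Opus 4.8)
The plan is to reduce the bound to a union bound over the $k$ subintervals of the grid. Since $X^{[k]}$ agrees with $X$ at the grid points $i/k$ and is constant on each $[(i-1)/k,i/k)$, one has the pathwise inequality
\[
\sup_{t\in[0,1]}\abs{X(t)-X^{[k]}(t)}\les\max_{1\les i\les k}M_i,\qquad M_i\coloneqq\sup_{s\in[0,1/k]}\abs*{X(\tfrac{i-1}{k}+s)-X(\tfrac{i-1}{k})}.
\]
By stationarity and independence of the increments of a L\'evy process, $M_1,\ldots,M_k$ are i.i.d.\ copies of $M\coloneqq\sup_{s\in[0,1/k]}\abs{X(s)}$, which has finite exponential moments by Lemma~\ref{lem:tail_estimate}. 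Hence it suffices to show $\e\max_{1\les i\les k}M_i^2=\Oh(\log k/k)$.

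First I would derive a sub-Gaussian-type tail bound for $M$. Applying Lemma~\ref{lem:tail_estimate} to both $X$ and $-X$ (which also satisfies its hypotheses) with $t=1/k$ gives, for every $u,x>0$,
\[
\p(M\ges x)\les2\exp\big(-ux+u^2e^{u\ve}/k\big).
\]
The assumption $\ve\les(k\log k)^{-1/2}$ is tailored for the single choice $u=\sqrt{k\log k}$: then $u\ve\les1$, hence $u^2e^{u\ve}/k\les e\log k$, so that
\[
\p(M\ges x)\les2k^{e}\exp\big(-\sqrt{k\log k}\;x\big),\qquad x>0.
\]

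Next I would run a layer-cake estimate. For any $y_0>0$, using $\p(\max_iM_i>s)\les k\,\p(M>s)$,
\[
\e\max_{1\les i\les k}M_i^2=\int_0^\infty\p\big(\max_{1\les i\les k}M_i>\sqrt{y}\big)\,\D y\les y_0+k\int_{y_0}^\infty\p(M>\sqrt{y})\,\D y.
\]
Taking $y_0=c^2\log k/k$ with a constant $c>0$ so large that $c-e\ges2$, the exponent of the tail bound at $x=c\sqrt{\log k/k}$ is at most $-2\log k$, which beats the factor $k$. Substituting $z=\sqrt{y}$ (so $\D y=2z\,\D z$) and inserting the bound from the previous step reduces the remaining integral to $\Oh\big(k^{1+e}\int_{c\sqrt{\log k/k}}^\infty z\,e^{-\sqrt{k\log k}\,z}\,\D z\big)$; an elementary evaluation shows this is $\Oh(k^{e-c})=\Oh(k^{-2})$, negligible next to $y_0=\Oh(\log k/k)$. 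Combining yields $\e\max_iM_i^2\les c^2\log k/k+\Oh(k^{-2})=\Oh(\log k/k)$, as claimed.

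The step I expect to be the main obstacle is the constant bookkeeping tying the last two displays together: the free parameter $u$ must be large enough that, after exponentiation, the exponential decay overcomes the union-bound factor $k$, yet small enough — having only $\ve\les(k\log k)^{-1/2}$ at one's disposal — that $u\ve$ stays bounded, so $e^{u\ve}$ does not spoil the exponent; and one must verify that the chosen constants work uniformly for all $k\ges2$, not merely asymptotically, where a few convenient simplifications (such as $\log k\ges1$) fail but only at the cost of harmless multiplicative constants. Everything else is routine.
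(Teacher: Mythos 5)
Your proof is correct, and it hinges on the same two observations as the paper's: apply Lemma~\ref{lem:tail_estimate} (to $\pm X$) and choose $u=\sqrt{k\log k}$ so that $u\ve\les 1$ keeps $e^{u\ve}$ harmless, which is exactly where the hypothesis $\ve\les(k\log k)^{-1/2}$ enters. Where you diverge is in how the maximum over the $k$ subintervals is controlled. The paper stochastically dominates each one-sided supremum over $[0,1/k]$ by a shifted exponential $E_i/u+ue^{u\ve}/k$, then reduces the problem to bounding $\e\max_{i\les k}E_i^2$, which it does by invoking extreme value theory (the Pickands convergence $\e\max_i E_i^2=\log^2 k+\Oh(\log k)$). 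Your layer-cake computation combined with a union bound is more elementary in that it avoids that external reference entirely; the price is the explicit constant bookkeeping you flag, and you handle it correctly by choosing $y_0=c^2\log k/k$ with $c-e\ges 2$ so that the factor $k^{1+e}$ from the union bound and the bounded $e^{u\ve}$ is absorbed into $\Oh(k^{-2})$, which is negligible against $y_0$. One small remark: the union bound over the $M_i$ requires only that they be identically distributed, so the independence you note is not actually needed. Both routes yield the stated uniform bound $\Oh(\log k/k)$ for all $k\ges 2$.
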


\begin{proof}
Let $E_1,E_2,\ldots$ be i.i.d.\ standard exponential random variables. For any $u,x>0$ we have
\[
\p(E_1/u+ue^{u\ve}/k\ges x)=\exp(-ux+u^2e^{u\ve}/k)\wedge 1,
\]
which is also an upper bound on $\p(\sup_{s\in [0,1/k]} X(s)\ges x)$ by Lemma~\ref{lem:tail_estimate}. Thus, $\sup_{s\in [0,1/k]} X(s)$ and similarly $-\inf_{s\in [0,1/k]} X(s)$ are stochastically bounded by $E_1/u+ue^{u\ve}/k$.

Let $(\ov G_i,\und G_i)$ be independent copies of $(\sup_{s\in [0,1/k]} X(s),-\inf_{s\in [0,1/k]} X(s))$. Then
\[
\e\sup_{t\in [0,1]} \big(X(t)-X^{[k]}(t)\big)^2
=\e\max_{1\les i\les k}(\ov G_i\vee\und G_i)^2
\les\e\max_{1\les i\les k}(\ov G^2_i+\und G_i^2)
\les\e\max_{1\les i\les k}\ov G^2_i
+\e\max_{1\les i\les k}\und G^2_i.
\]
But each of the latter is upper bounded by
\[
\e\max_{1\les i\les k} (E_i/u+ue^{u\ve}/k)^2\les\dfrac{2}{u^2}\e\max_{1\les i\les k} E_i^2+\dfrac{2u^2}{k^2}e^{2u\ve}.
\]
It is a basic fact that
\[
\max_{1\les i\les k} E_i-\log k\cid G,\quad k\to\infty,
\]
where $G$ has the standard Gumbel distribution. From~\cite{pickands_68} we also have
\[
\e\Big(\max_{1\les i\les k} E_i-\log k\Big)\to\e G,
\qquad
\e\Big(\max_{1\les i\les k} E_i-\log k\Big)^2\to\e G^2,
\]
implying $\e\max_{1\les i\les k} E_i^2=\log^2 k+\Oh(\log k)$. Hence we may choose a constant $C'$ such that
\[
\e\sup_{t\in [0,1]} \big(X(t)-X^{[k]}(t)\big)^2
\les C'\frac{2}{u^2}\log^2 k+\frac{2u^2}{k^2}e^{2u\ve}
\]
for all $k\ges 2$ and all $X$ satisfying our assumptions. Choose $u=\sqrt{k\log k}$ to get the result.
\end{proof}

\begin{lemma}
\label{lem:CPP_disc}
There is a universal constant $C>0$ such that
\[
\e\sup_{t\in [0,1]} \big(X(t)-X^{[k]}(t)\big)^2
\les C(\ve^{-2}\mu_4/\sqrt{k}+\sqrt{\mu_4}),
\]
for any $k\ges 1$, any $\ve>0$ and any compensated compound Poisson process $X$ with jumps of size at least $\ve$.
\end{lemma}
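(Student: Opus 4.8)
The plan is to replace the supremum over $[0,1]$ by a maximum over the $k$ sub-intervals $I_i\coloneqq\big(\tfrac{i-1}{k},\tfrac{i}{k}\big]$. Writing $\mathrm{osc}_i\coloneqq\sup_{0\les s<1/k}\abs{X(\tfrac{i-1}{k}+s)-X(\tfrac{i-1}{k})}$, we have $\sup_{t\in[0,1]}\abs{X(t)-X^{[k]}(t)}=\max_{1\les i\les k}\mathrm{osc}_i$, and the $\mathrm{osc}_i$ are i.i.d. Since a mean-zero L\'evy process is a martingale, Doob's $L^2$-inequality gives $\e\,\mathrm{osc}_i^2\les 4\,\e X(1/k)^2=4\sigma^2/k$, with $\sigma^2\coloneqq\int_\mbR x^2\Pi(\D x)\les\ve^{-2}\mu_4$ because $\Pi$ charges only $\{\abs{x}\ges\ve\}$. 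The trivial estimate $\e\max_i\mathrm{osc}_i^2\les k\,\e\,\mathrm{osc}_1^2\les 4\sigma^2$ loses a factor $\sqrt k$, so the task is to exploit that almost every interval carries negligible mass.

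First I would split off the large jumps: set $\delta\coloneqq\mu_4^{1/4}$ and write $\Pi=\Pi_Y+\Pi_Z$ with $\Pi_Z\coloneqq\Pi\,\1{\abs{x}>\delta}$, so $X=Y+Z$ is a sum of independent compensated compound Poisson processes. The intensity of $Z$ is $\ov\Pi(\delta)\les\delta^{-4}\mu_4=1$, and with $b_Z\coloneqq\int_{\abs{x}>\delta}x\,\Pi(\D x)$ one has $\max_i\mathrm{osc}_i^Z\les S^Z+\abs{b_Z}/k$, where $S^Z$ is the total absolute jump mass of $Z$ on $[0,1]$. Using $\sigma_Z^2\coloneqq\int_{\abs{x}>\delta}x^2\Pi(\D x)\les\delta^{-2}\mu_4=\sqrt{\mu_4}$ and, by Cauchy--Schwarz, $\big(\int_{\abs{x}>\delta}\abs{x}\,\Pi(\D x)\big)^2\les\ov\Pi(\delta)\,\sigma_Z^2\les\sqrt{\mu_4}$, one gets $\e(S^Z)^2\les 2\sqrt{\mu_4}$ and $\abs{b_Z}^2/k^2\les\sqrt{\mu_4}$, hence $\e\sup_t\big(Z(t)-Z^{[k]}(t)\big)^2=\Oh(\sqrt{\mu_4})$. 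This produces the second term of the statement, which is essential: a single large jump survives any discretization.

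For the bounded-jump martingale $Y$ — jumps of absolute size in $[\ve,\delta]$ and variance $\sigma_Y^2\les\sigma^2\les\ve^{-2}\mu_4$ — I would run the exponential-moment argument of Lemma~\ref{lem:tail_estimate}. From $e^z-1-z\les z^2e^{\abs{z}}$ one gets $\Psi_Y(u)\coloneqq\int(e^{ux}-1-ux)\Pi_Y(\D x)\les u^2e^{u\delta}\sigma_Y^2$, so $e^{uY(t)}$ is a submartingale and Doob yields $\p\big(\sup_{s\les 1/k}Y(s)\ges x\big)\les\exp(-ux+u^2e^{u\delta}\sigma_Y^2/k)$, and likewise for $-Y$; a union bound over the $k$ intervals gives, for every $u>0$,
\[
\p\big(\max_i\mathrm{osc}_i^Y\ges x\big)\;\les\;2k\exp\!\big(-ux+u^2e^{u\delta}\sigma_Y^2/k\big)\wedge 1.
\]
When $\delta/\sigma_Y\les(k\log k)^{-1/2}$ one may rescale $Y$ by $\sigma_Y$ and invoke Lemma~\ref{lem:mart_disc} directly, obtaining $\e\sup_t\big(Y(t)-Y^{[k]}(t)\big)^2=\Oh(\sigma_Y^2\log k/k)$, which is $\Oh(\ve^{-2}\mu_4/\sqrt k)$ since $\log k\les\sqrt k$ for all $k\ges 1$; together with the $Z$-estimate and $(a+b)^2\les 2a^2+2b^2$ this settles that case.

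The hard part is the complementary regime $\delta/\sigma_Y>(k\log k)^{-1/2}$, where $Y$ behaves like a sparse sum of comparatively large but bounded jumps and the clean sub-Gaussian tail above breaks down. Here one must optimize $u=u(x)$ in the displayed bound by hand and split into further cases according to whether $\sigma_Y^2$ is controlled by $\ve^{-2}\mu_4/\sqrt k$ — in which case the crude estimate $\e\sup_t\big(Y(t)-Y^{[k]}(t)\big)^2\les 4\sigma_Y^2$ already suffices — or by $\sqrt{\mu_4}\,k\log k$, in which case the far-tail part of the union bound is the binding term; in all cases the inequality $\log k\les\sqrt k$ is used repeatedly to absorb stray logarithmic factors into $\ve^{-2}\mu_4/\sqrt k$. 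It is precisely this regime-by-regime bookkeeping, carried out uniformly in $\ve>0$, $k\ges 1$ and in the L\'evy measure with a universal constant, that makes the proof lengthy and motivates relegating it to this appendix.
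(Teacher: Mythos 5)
Your decomposition at $\delta=\mu_4^{1/4}$ and the bound on the heavy-jump part $Z$ are correct, and the easy $Y$-subcase $\delta/\sigma_Y\les(k\log k)^{-1/2}$ does reduce to a rescaled Lemma~\ref{lem:mart_disc}. But you have not actually proved the statement: the last paragraph only describes an intended case analysis, and the plan is not obviously salvageable. In the regime $\delta/\sigma_Y>(k\log k)^{-1/2}$ the quantity $\sigma_Y^2/(k\delta^2)$ is only bounded by $\log k$, so with $u\sim 1/\delta$ the prefactor $\exp(u^2e^{u\delta}\sigma_Y^2/k)$ in your union bound can be a power of $k$; integrating the resulting tail then produces terms of order $\delta^2\log^2 k=\sqrt{\mu_4}\,\log^2 k$, and the inequality $\log k\les\sqrt{k}$ does not, in general, absorb this extra $\log^2 k$ into either $\sqrt{\mu_4}$ or $\ve^{-2}\mu_4/\sqrt{k}$. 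This is a genuine gap, not a bookkeeping chore.

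In fact the paper's proof is short and does not split the jumps or touch exponential tails at all; the key idea is to work with the \emph{fourth} moment. A union bound over the $k$ blocks followed by Doob's $L^4$ maximal inequality gives
\[
\e\sup_{t\in[0,1]}\big(X(t)-X^{[k]}(t)\big)^4\les k\,\e\sup_{s\in[0,1/k)}X^4(s)\les(4/3)^4\,k\,\e X^4(1/k).
\]
For the compensated compound Poisson process the cumulant formula yields
\[
\e X^4(t)=3t^2\Big(\int_{\abs{x}\ges\ve}x^2\,\Pi(\D x)\Big)^2+t\,\mu_4,
\]
and $\int_{\abs{x}\ges\ve}x^2\,\Pi(\D x)\les\ve^{-2}\mu_4$ because the jumps are of size at least $\ve$, so $k\,\e X^4(1/k)\les 3\ve^{-4}\mu_4^2/k+\mu_4$. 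Jensen's inequality then turns the fourth-moment bound into the stated second-moment bound, and $\sqrt{a+b}\les\sqrt{a}+\sqrt{b}$ gives the claim. The appearance of $\mu_4$ in the lemma is the hint: control the fourth moment of the oscillation, and the whole decomposition and regime-splitting machinery becomes unnecessary.
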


\begin{proof}
Applying Doob's martingale inequality~\cite[Prop.~7.16]{kallenberg}, we obtain
\[
\e\sup_{t\in [0,1]} \big(X(t)-X^{[k]}(t)\big)^4\les k\e\sup_{t\in[0,1/k)} X^4(t)\les (4/3)^4\cdot k\e X^4(1/k).
\]
Using the moment formula in terms of cumulants and the equality $\e X(1)=0$, we obtain
\[
\e X^4(t)=3t^2\left(\int_{\abs{x}\ges\ve} x^2\Pi(\D x)\right)^2+t\int_{\abs{x}\ges\ve} x^4\Pi(\D x),
\]
and noting that
\[
\int_{\abs{x}\ges\ve} x^2\Pi(\D x)\les\ve^{-2}\int_{\abs{x}\ges\ve} x^4\Pi(\D x)=\ve^{-2}\mu_4,
\]
we get
\[
k\e X^4(1/k)\les 3\ve^{-4}\mu_4^2/k+\mu_4.
\]
By Jensen's inequality,
\[
\e\sup_{t\in [0,1]} \big(X(t)-X^{[k]}(t)\big)^2\les 16/9\cdot \big(3\ve^{-4}\mu_4^2/k+\mu_4\big)^{1/2},
\]
and the result follows.
\end{proof}

\begin{proof}[Proof of Proposition~\ref{prop:Levy_disc}]
Let $\ve=(k\log k)^{-1/2}$ and consider the L\'evy--It\^o decomposition $X(t)=\sigma B(t)+Y(t)+Z(t)$, where $B$ is a standard Brownian motion, $Y$ is a compensated compound Poisson process with jumps of size exceeding $\ve$, and $Z$ is a pure-jump martingale with jumps of size at most~$\ve$. Since $\sigma^2\les\e X^2(1)\les 1$, we have
\[
\sup_{t\in [0,1]} \abs{X(t)-X^{[k]}(t)}\les\sup_{t\in [0,1]} \abs{B(t)-B^{[k]}(t)}+\sup_{t\in [0,1]} \abs{Y(t)-Y^{[k]}(t)}+\sup_{t\in [0,1]} \abs{Z(t)-Z^{[k]}(t)}.
\]
Bounding each term in the squared mean sense by using relation~\eqref{eq:BM_disc}, Lemma~\ref{lem:mart_disc} and Lemma~\ref{lem:CPP_disc}, for all $k\ges 2$ we get the bound
\[
C(\sqrt{k}\log k\cdot\mu_4+\sqrt{\mu_4}+\log k/k).
\]
It is left to note that $\log k<\sqrt{k}$ and $\mu_4/(k\mu_4+\log k/k)^2\les 1/(2\log k)\les 1/(2\log 2)$.
\end{proof}

We note that Proposition~\ref{prop:Levy_disc} holds under less restrictive assumptions.
It is sufficient to assume instead of~\eqref{eq:main_assumptions} that $\e X^2(1)\les 1 ,\e X^4(1)<\infty$, since the additional drift introduces an error of order $[\e X(1)/k]^2$, which does not change the bound.

\section{Near-comonotonic coupling via repeated simulations}
\label{sec:final_value}
\begin{lemma}
\label{lem:empirical_comonotonic}
Let $\xi,\xi_1,\xi_2,\ldots$ and $\zeta,\zeta_1,\zeta_2,\ldots$ be 
two independent i.i.d. sequences with laws $\mu_\xi$ and 
$\mu_\zeta$, respectively, with finite fourth moment. 
For any $n\ges 2$ let $\xi_{(1)}\les\ldots\les\xi_{(n)}$ and 
$\zeta_{(1)}\les\ldots\les\zeta_{(n)}$ be the ranked values of 
$\xi_1,\ldots,\xi_n$ and $\zeta_1,\ldots,\zeta_n$, respectively. 
Then for any independent uniform random variable $U$ on 
$\{1,\ldots,n\}$, we have $\xi_{(U)}\sim\mu_\xi$, 
$\zeta_{(U)}\sim\mu_\zeta$ and
\[
\e\big(\xi_{(U)}-\zeta_{(U)}\big)^2
\les 3\mcW_2^2(\mu_\xi,\mu_\zeta)
+\frac{c\log n}{\sqrt{n}}
\big(\sqrt{\e\xi^4}+\sqrt{\e\zeta^4}\big),
\]
where $c>0$ is a universal constant independent of $\mu_\xi$,
$\mu_\zeta$ and $n$.
\end{lemma}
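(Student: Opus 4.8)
The plan is to compare the random quantity $\bigl(\xi_{(U)}-\zeta_{(U)}\bigr)^2$, whose expectation (conditionally on the samples) equals $\frac1n\sum_{i=1}^n(\xi_{(i)}-\zeta_{(i)})^2$, with the empirical Wasserstein distance, and then to relate the latter to the true $\mcW_2^2(\mu_\xi,\mu_\zeta)$ via the corresponding population coupling. That $\xi_{(U)}\sim\mu_\xi$ and $\zeta_{(U)}\sim\mu_\zeta$ is immediate: picking a uniformly random order statistic returns a uniformly random sample. For the $L^2$-bound, let $F_n$ and $G_n$ be the empirical CDFs of $\xi_1,\dots,\xi_n$ and $\zeta_1,\dots,\zeta_n$; then $\frac1n\sum_{i=1}^n(\xi_{(i)}-\zeta_{(i)})^2=\mcW_2^2(F_n,G_n)$, exactly as in the proof of Lemma~\ref{lem:standard_normal_rvs}. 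So it remains to bound $\e\,\mcW_2^2(F_n,G_n)$.

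The key step is a triangle-type inequality for $\mcW_2$. Introduce a near-optimal coupling of $\mu_\xi$ and $\mu_\zeta$ achieving (say, within a factor) $\mcW_2(\mu_\xi,\mu_\zeta)$, and let $F,G$ be the two marginals. Then $\mcW_2(F_n,G_n)\les\mcW_2(F_n,F)+\mcW_2(F,G)+\mcW_2(G,G_n)$, hence after squaring and using $(a+b+c)^2\les 3(a^2+b^2+c^2)$ we get
\[
\mcW_2^2(F_n,G_n)\les 3\mcW_2^2(\mu_\xi,\mu_\zeta)+3\mcW_2^2(F_n,F)+3\mcW_2^2(G_n,G).
\]
Taking expectations, the problem reduces to the one-sample rate $\e\,\mcW_2^2(F_n,F)=\Oh\!\bigl(n^{-1/2}\sqrt{\e\xi^4}\bigr)$ (and likewise for $\zeta$). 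For this I would invoke a standard bound on the mean squared empirical Wasserstein distance on $\mbR$ in terms of a moment of order $>2$; e.g.\ the Fournier–Guillin rate, or the classical Ebralidze/del Barrio–Giné–Matrán estimate $\e\,\mcW_2^2(F_n,F)\les \frac{c}{n}\int_{\mbR}\frac{F(x)(1-F(x))}{f(x)}\,dx$ when a density exists, but cleaner here is a distribution-free bound of the form $\e\,\mcW_2^2(F_n,F)\les c\,n^{-1/2}(\e|\xi|^4)^{1/2}$; the $\log n$ factor in the statement suggests the authors use precisely such a moment bound with a logarithmic correction (or a dyadic/CLT-type argument as in \cite{rio09}, which already furnishes fourth-moment control). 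Plugging these in yields the claimed inequality with the $\frac{c\log n}{\sqrt n}\bigl(\sqrt{\e\xi^4}+\sqrt{\e\zeta^4}\bigr)$ term.

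The main obstacle is obtaining the one-sample rate $\e\,\mcW_2^2(F_n,F)=\Oh(\log n\cdot n^{-1/2}\sqrt{\e\xi^4})$ with a \emph{universal} constant, uniformly over all laws with finite fourth moment and with exactly the stated moment dependence. The naive bound $\e\,\mcW_2^2(F_n,F)\les \e\int_0^1|F_n^{-1}(u)-F^{-1}(u)|^2\,du$ and direct quantile estimates give the right order for light tails but must be handled carefully in the tails; controlling the tail contribution is where the fourth moment and the extra $\log n$ enter. I would therefore split $\mcW_2^2(F_n,F)$ into a bulk part on a central quantile range, estimated via $\var(F_n(x))\les \frac1n F(x)(1-F(x))$ and the layer-cake/Cauchy–Schwarz representation of $\mcW_2^2$, and a tail part controlled by $\e\xi^4$ through Markov's inequality; balancing the truncation level produces the $\log n$. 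This is the only genuinely quantitative piece; everything else is the coupling triangle inequality plus the elementary identities above.
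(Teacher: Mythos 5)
Your proposal matches the paper's proof: the same identity $\e(\xi_{(U)}-\zeta_{(U)})^2=\e\mcW_2^2(\mu^\n_\xi,\mu^\n_\zeta)$, the same $3$-term triangle inequality splitting off $\mcW_2^2(\mu_\xi,\mu_\zeta)$ and the two empirical-versus-true terms, and then a citation for the one-sample rate $\e\mcW_2^2(\mu^\n_\xi,\mu_\xi)\les c\log n\cdot n^{-1/2}\sqrt{\e\xi^4}$. The paper uses~\cite[Cor.~7.18]{MR4028181} (Bobkov--Ledoux) for that last step rather than the alternatives you list, but this is just a choice of reference, not a different argument.
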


\begin{proof}
The independence and exchangeability imply that  $\xi_{(U)}\sim\mu_\xi$ and $\zeta_{(U)}\sim\mu_\zeta$. Define the empirical distribution functions $\mu^\n_\xi\coloneqq n^{-1}\sum_{k=1}^n \delta_{\xi_k}$ and $\mu^\n_\zeta\coloneqq n^{-1}\sum_{k=1}^n \delta_{\zeta_k}$. By~\cite[Cor.~7.18]{MR4028181} (and the discussion preceding it), there exists a universal constant $c>0$ such that 
then
\begin{equation}
\label{eq:W-empirical}
\e\mcW_2^2(\mu^\n_\xi,\mu_\xi)
\les\dfrac{c\log n}{\sqrt{n}}\sqrt{\e\xi^4},
\quad\text{and}\quad
\e\mcW_2^2(\mu^\n_\zeta,\mu_\zeta)
\les\dfrac{c\log n}{\sqrt{n}}\sqrt{\e\zeta^4},
\qquad
\forall\, n\ges 2.
\end{equation}

Note that
\begin{align*}
\e\big(\xi_{(U)}-\zeta_{(U)}\big)^2
&=\dfrac{1}{n}\sum_{k=1}^n
\e(\xi_{(k)}-\zeta_{(k)})^2=\e\mcW_2^2(\mu^\n_\xi,\mu^\n_\zeta)\\
&\les 3\mcW_2^2(\mu_\xi,\mu_\zeta)
+3\e\mcW_2^2(\mu^\n_\xi,\mu_\xi)
+3\e\mcW_2^2(\mu^\n_\zeta,\mu_\zeta).
\end{align*}
Thus, an application of~\eqref{eq:W-empirical} gives the claim.
\end{proof}

\section*{Acknowledgments}
VF and JI gratefully acknowledge financial support of Sapere Aude Starting Grant 
8049-00021B ``Distributional Robustness in Assessment of Extreme Risk''.
JGC is grateful for the support of The Alan Turing Institute under  
EPSRC grant EP/N510129/1 and CoNaCyT scholarship 
2018-000009-01EXTF-00624 CVU699336.

\bibliographystyle{abbrv}
\bibliography{coupling}
\end{document}